\newcommand{\mm}{\mathfrak m}
\newcommand{\nn}{\mathfrak n}
\newcommand{\pp}{\mathfrak p}
\newcommand{\Z}{\mathbb{Z}}
\newcommand{\N}{\mathbb{N}}
\newcommand{\Fc}{\mathcal{F}}
\newcommand{\Gc}{\mathcal{G}}
\DeclareMathOperator{\pnt}{\raise 0.5mm \hbox{\large\bf.}}
\DeclareMathOperator{\Coker}{Coker}
\DeclareMathOperator{\Img}{Im}
\DeclareMathOperator{\inmat}{indmatch}
\DeclareMathOperator{\gr}{gr}
\DeclareMathOperator{\chara}{char}
\DeclareMathOperator{\Tor}{Tor}
\DeclareMathOperator{\lind}{ld}
\DeclareMathOperator{\linp}{lin}
\DeclareMathOperator{\reg}{reg}
\DeclareMathOperator{\projdim}{pd}
\DeclareMathOperator{\supp}{supp}
\def\+#1{\relax\ifmmode\if\noexpand #1\relax \mathop{\kern
    0pt^+{#1}}\nolimits\else \kern 0pt^+\!#1 \fi\else$^*$#1\fi}
\newcommand{\shift}{\mathsf{\Sigma}}
\newcommand{\vphi}{\varphi}
\newtheorem{thm}{\bf Theorem}[section]
\newtheorem{lem}[thm]{\bf Lemma}
\newtheorem{cor}[thm]{\bf Corollary}
\newtheorem{prop}[thm]{\bf Proposition}
\newtheorem{quest}[thm]{\bf Question}
\theoremstyle{definition}
\newtheorem{defn}[thm]{\bf Definition}
\theoremstyle{plain}
\newtheorem*{thm*}{Theorem}
\newtheorem*{lem*}{Lemma}
\newtheorem*{cor*}{Corollary}
\newtheorem*{claim*}{Claim}
\newtheorem*{defn*}{Definition}
\theoremstyle{remark}
\newtheorem{rem}[thm]{Remark}
\newtheorem{ex}[thm]{Example}
\numberwithin{equation}{section}
\title[Linearity defect of edge ideals]{Linearity defect of edge ideals and Fr\"oberg's theorem}
\author{Hop D. Nguyen}
\address{Fachbereich Mathematik/Informatik, Institut f\"ur Mathematik, Universit\"at Osnabr\"uck, Albrechtstr. 28a, 49069 Osnabr\"uck, Germany}
\address{Dipartimento di Matematica, Universit\`a di Genova, Via Dodecaneso 35, 16146 Genoa, Italy}
\email{ngdhop@gmail.com}
\author{Thanh Vu}
\address{Department of Mathematics, University of Nebraska-Lincoln, Lincoln, NE 68588, USA}
\email{tvu@unl.edu}
\thanks{This work is partially supported by the NSF grant DMS-1103176. Part of this work was done when Nguyen was a Marie Curie fellow of the Istituto Nazionale di Alta Matematica.}
\subjclass[2010]{05C25, 13D02, 13H99}
\keywords{Edge ideal; Linearity defect; Weakly chordal graph; Castelnuovo-Mumford regularity}
\begin{document}

\begin{abstract}
Fr\"oberg's classical theorem about edge ideals with $2$-linear resolution can be regarded as a classification of graphs whose edge ideals have linearity defect zero. Extending his theorem, we classify all graphs whose edge ideals have linearity defect at most $1$. Our characterization is independent of the characteristic of the base field: the graphs in question are exactly weakly chordal graphs with induced matching number at most $2$. The proof uses the theory of Betti splittings of monomial ideals due to Francisco, H\`a, and Van Tuyl and the structure of weakly chordal graphs. Along the way, we compute the linearity defect of edge ideals of cycles and weakly chordal graphs. We are also able to recover and generalize previous results due to Dochtermann-Engstr\"om, Kimura and Woodroofe on the projective dimension and Castelnuovo-Mumford regularity of edge ideals.
\end{abstract}

\maketitle
\section{Introduction}

Let $(R,\mm)$ be a standard graded algebra over a field $k$ with the graded maximal ideal $\mm$. Let $M$ be a finitely generated graded $R$-module. For integers $i,j$, the $(i,j)$-graded Betti number of $M$ is defined by $\beta_{i,j}(M)=\dim_k \Tor^R_i(k,M)_j$. The following number is called the {\it (Castelnuovo-Mumford) regularity} of $M$ over $R$
\[
\reg_R M =\sup\{j-i: \beta_{i,j}(M)\neq 0\}.
\]
The regularity is an important invariant of graded modules over $R$. When $R$ is a polynomial ring and $M$ is a monomial ideal of $R$, its regularity exposes many combinatorial flavors. This fact has been exploited and proved to be very useful for studying the regularity; for recent surveys, see \cite{Ha}, \cite{MV}, \cite{V}. A classical and instructive example is Fr\"oberg's theorem. Recall that if $G$ is a graph on the vertex set $\{x_1,\ldots,x_n\}$ (where $n\ge 1$), and by abuse of notation, $R$ is the polynomial ring $k[x_1,\ldots,x_n]$, then the {\it edge ideal} of $G$ is $I(G)=(x_ix_j: \{x_i,x_j\} ~ \text{is an edge of $G$})$. Unless otherwise stated, whenever we talk about an invariant of $I(G)$ (including the regularity and the linearity defect, to be defined below), it is understood that the base ring is the polynomial ring $R$. For $m\ge 3$, the {\it cycle} $C_m$ is the graph on vertices $x_1,\ldots,x_m$ with edges $x_1x_2,\ldots,x_{m-1}x_m,x_mx_1$. We say that a graph $G$ is {\it weakly chordal} (or weakly triangulated) if for every $m\ge 5$, neither $G$, nor its complement contains $C_m$ as an induced subgraph. $G$ is {\it chordal} if for any $m\ge 4$, $C_m$ is not an induced subgraph of $G$. Fr\"oberg's theorem \cite{Fr} says that $I(G)$ has regularity $2$ if and only if the complement graph of $G$ is chordal. It is of interest to find generalizations of this important result; see, for instance, \cite{EGHP}, \cite{FG}. Fern\'andez-Ramos and Gimenez \cite[Theorem 4.1]{FG} extended Fr\"oberg's theorem by providing a combinatorial characterization of connected bipartite graphs $G$ such that $\reg I(G)=3$. In general, the regularity $3$ condition on edge ideals is not purely combinatorial as it depends on the characteristic of the field; see Katzman's example in \cite[Page 450]{K}.

The linearity defect was introduced by Herzog and Iyengar \cite{HIy} motivated by work of Eisenbud, Fl{\o}ystad, and Schreyer \cite{EFS}. It is defined via the linear part of minimal free resolutions of modules over $R$, see Section \ref{sect_background} for details. The linear part appears naturally: from \cite[Theorem 3.4]{EFS}, taking homology of a complex over a polynomial ring is equivalent to taking the linear part of a minimal free complex over the (Koszul dual) exterior algebra. The linearity defect itself is interesting because it yields stronger homological information than the regularity: over a local ring, Herzog and Iyengar \cite[Proposition 1.8]{HIy} proved that modules with finite linearity defect have rational Poincar\'e series with constant denominator. On the other hand, there exist modules which have finite regularity and transcendental Poincar\'e series \cite[Page 252]{J}. Furthermore, the linearity defect is flexible enough to generate a rich theory: going beyond regular rings, one still encounters reasonable classes of rings over which every module has finite linearity defect, e.g. exterior algebras \cite{EFS}, homogeneous complete intersections defined by quadrics \cite{HIy}.

Let $R$ again be a polynomial ring over $k$ and $M$ a finitely generated graded $R$-module. The linearity defect of $M$ over $R$ is denoted by $\lind_R M$ or simply $\lind M$. Then $\lind M$ equals $\ell$ (where $\ell \ge 0$) if and only if the first syzygy module of $M$ which is componentwise linear in the sense of Herzog and Hibi \cite{HH} is the $\ell$-th one. In particular, the condition that an edge ideal has $2$-linear resolution is equivalent to the condition that its linearity defect is $0$. Fr\"oberg's theorem can be rephrased as a classification of edge ideals with linearity defect $0$. Our motivation is to find a purely combinatorial characterization for linearity defect $1$ edge ideals; it turns out that indeed there is one. Finding such a characterization is non-trivial due to several reasons. First, while the Hochster's formula can give much information about the regularity of Stanley-Reisner ideals (see for example Dochtermann and Engstr\"om \cite{DE}), up to now there is no combinatorial interpretation of the linearity defect of Stanley-Reisner ideals. (See \cite{OkaYan}, \cite{Ro} for some results about the linearity defect of such ideals.) Second and furthermore, the linearity defect generally cannot be read off from the Betti table: for example (see \cite[Example 2.8]{HSV}), the ideals 
$$
I_1=(x_1^4,x_1^3x_2,x_1^2x_2^2,x_1x_2^3,x_2^4,x_1^3x_3,x_1^2x_2x_3^2,x_1^2x_3^3,x_1x_2^2x_3^2)
$$ 
and 
$$
I_2=(x_1^4,x_1^3x_2,x_1^2x_2^2,x_1^3x_3,x_1x_2^2x_3,x_1x_2x_3^2,x_1x_2^4,x_1^2x_3^3,x_2^4x_3)
$$ in $k[x_1,x_2,x_3]$ have the same graded Betti numbers, but the first one has linearity defect $0$ while the second one has positive linearity defect (equal to $1$). Third, the quest of finding the aforementioned characterization yields interesting new insights even to the more classical topics concerning Castelnuovo-Mumford regularity or the projective dimension. Indeed, in proving one of the main results (Theorem \ref{thm_weaklychordal}), we recover a theorem of Woodroofe \cite[Theorem 14]{W} on regularity of edge ideals of weakly chordal graphs. In Theorem \ref{thm_projdim_weaklychordal}, we prove a new result about the projective dimension of edge ideals of weakly chordal graphs, extending previous work of Dochtermann and Engstr\"om \cite{DE} and Kimura \cite{Ki1}, \cite{Ki2}.

As mentioned earlier, our motivation is to see if the linearity defect one condition is purely combinatorial. Recall that for $g\ge 1$, the $gK_2$ graph is the graph consisting of $g$ disjoint edges. The {\it induced matching number} of a graph $G$, denoted by $\inmat(G)$, is the largest number $g$ such that there is an induced $gK_2$ subgraph in $G$. The first main result of our paper is the following new generalization of Fr\"oberg's theorem.
\begin{thm}[See Theorem \ref{thm_ld1}]
\label{thm_main1}
Let $G$ be a graph. Then $\lind I(G)=1$ if and only if $G$ is a weakly chordal graph with induced matching number $\inmat(G)=2$.
\end{thm}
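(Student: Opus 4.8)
The plan is to handle the two implications separately, using Fr\"oberg's theorem as the base case (it says precisely ``$\lind I(G)=0$ if and only if $G^c$ is chordal''), the theory of Betti splittings, and the structure theory of weakly chordal graphs. The first thing I would establish is a \emph{linearity defect identity for Betti splittings}: if $I=J+K$ is a Betti splitting in the sense of Francisco--H\`a--Van Tuyl, then $\lind I=\max\{\lind J,\lind K,\lind(J\cap K)+1\}$. This upgrades the Betti-number identity $\beta_{i,j}(I)=\beta_{i,j}(J)+\beta_{i,j}(K)+\beta_{i-1,j}(J\cap K)$ to the level of linear parts of minimal free resolutions. Applying it to the vertex splitting $I(G)=x_v\bigl(x_j:x_j\in N_G(v)\bigr)+I(G\setminus v)$ --- which is a Betti splitting because its first summand has a linear resolution --- gives at once the monotonicity principle $\lind I(H)\le\lind I(G)$ for every induced subgraph $H$ of $G$ (delete vertices one at a time and use $\lind I\ge\lind K$).

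For the implication ``$\lind I(G)=1\Rightarrow G$ weakly chordal and $\inmat(G)=2$'': since $\lind I(G)\neq0$, Fr\"oberg gives that $G^c$ is not chordal, so $G$ has an edge. By monotonicity it then suffices to exhibit three families of ``obstruction'' graphs with linearity defect at least $2$, none of which can occur as an induced subgraph of $G$: (i) $3K_2$, where $I(3K_2)$ is a complete intersection of three quadrics whose (Koszul) minimal free resolution has all differential entries of degree $2$, so its first syzygy is not componentwise linear while its second syzygy is free, giving $\lind I(3K_2)=2$; (ii) $C_m$ for $m\ge5$; and (iii) $\overline{C_m}$ for $m\ge5$. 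Families (ii) and (iii) rely on the explicit computation of $\lind I(C_m)$ (and its analogue for the complements), carried out alongside; this genuinely requires tracking the linear part of the resolution, not merely a regularity estimate --- for instance $\reg I(C_6)=3$ is consistent with $\lind I(C_6)=1$, even though for $C_5$ a regularity argument already works (if $\lind I(C_5)=1$ then $\Omega^1 I(C_5)$ would be componentwise linear, forcing $\reg I(C_5)=\max\{2,\reg\Omega^1 I(C_5)-1\}=2$, contradicting $\reg I(C_5)=3$). Granting (i)--(iii): (i) forces $\inmat(G)\le2$; if $\inmat(G)=1$, then the non-chordality of $G^c$ yields an induced $C_m$ in $G^c$ with $m\neq4$ (an induced $C_4$ in $G^c$ would be an induced $2K_2$ in $G$), hence $m\ge5$ and $G$ contains an induced $\overline{C_m}$, contradicting (iii); and if $G$ is not weakly chordal, then $G$ or $G^c$ contains an induced $C_m$ with $m\ge5$, i.e.\ $G$ contains an induced $C_m$ or an induced $\overline{C_m}$, contradicting (ii) or (iii). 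Hence $G$ is weakly chordal with $\inmat(G)=2$.

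For the converse, the lower bound is immediate: $\inmat(G)=2$ produces an induced $2K_2$, hence an induced $C_4$ in $G^c$, so $G^c$ is not chordal and Fr\"oberg rules out $\lind I(G)=0$. The upper bound $\lind I(G)\le1$ is the substantial part; I would deduce it from a general formula (Theorem~\ref{thm_weaklychordal}) computing $\lind I(G)$ for every weakly chordal $G$ --- a formula whose proof should simultaneously recover Woodroofe's regularity result \cite{W} for such graphs. To prove this, I would induct on the number of vertices: a non-complete weakly chordal graph has a \emph{two-pair} $\{u,v\}$ (two nonadjacent vertices all of whose induced connecting paths have length $2$), and since adding the edge $uv$ preserves weak chordality one can use the structure around the two-pair to select a vertex $v$ for the vertex splitting $I(G)=x_v\bigl(x_j:x_j\in N_G(v)\bigr)+I(G\setminus v)$. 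In the linearity defect identity the first summand has $\lind=0$ and $\lind I(G\setminus v)\le1$ by induction (as $G\setminus v$ is weakly chordal with $\inmat(G\setminus v)\le\inmat(G)=2$), so everything comes down to showing that the intersection ideal $J\cap K$ --- a monomial ideal obtained, after a monomial shift, by intersecting the variable ideal $\bigl(x_j:x_j\in N_G(v)\bigr)$ with $I(G\setminus v)$ --- is componentwise linear, so that it contributes only $0+1=1$. The disconnected case (starting with $2K_2$, where $I$ is a complete intersection of two quadrics and $\lind I=1$ directly) is dealt with separately, via a direct-sum/tensor-product formula for the linearity defect.

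The step I expect to be the main obstacle is precisely this last one: verifying that the auxiliary intersection ideals produced by the Betti splittings are componentwise linear. Intersections of componentwise linear ideals are not componentwise linear in general, so the weak-chordality and $\inmat\le2$ hypotheses must be used here in an essential way --- concretely, via Fr\"oberg's criterion applied to the complements of the relevant induced subgraphs --- and the vertex $v$ (equivalently, the two-pair) must be chosen so that these ideals are in fact nice, which is a nontrivial consequence of the structure of weakly chordal graphs. A related difficulty, already flagged in the introduction, is that the linearity defect is not a function of the graded Betti numbers, so the bookkeeping throughout must be at the level of linear parts of resolutions rather than Betti numbers; this is also why the computation of $\lind I(C_m)$ for $m\ge5$, needed in the forward direction, cannot be reduced to a statement about regularity.
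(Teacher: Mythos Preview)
Your forward direction is essentially the paper's: induced-subgraph monotonicity together with the computations $\lind I(C_m)\ge 2$ and $\lind I(\overline{C_m})\ge 2$ for $m\ge 5$ (the paper's Theorem~\ref{thm_cycle} and Lemma~\ref{lem_anticycle}) forces weak chordality, and your $3K_2$/Fr\"oberg argument gives $\inmat(G)=2$. The paper reaches $\inmat(G)=2$ via a regularity bound and Woodroofe's theorem instead, but your route is equally valid. One caveat: the monotonicity $\lind I(H)\le\lind I(G)$ should be obtained from the algebra-retract argument (Lemma~\ref{lem_retract}/Corollary~\ref{cor_inducedsubgr}), not from your claimed identity, for the reason below.

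The ``linearity defect identity'' $\lind I=\max\{\lind J,\lind K,\lind(J\cap K)+1\}$ is \emph{false}, even for vertex splittings. Take $G=P_5$ and split at $v=x_3$: then $K=I(G\setminus v)=(x_1x_2,x_4x_5)$ and $L=(x_2,x_4)$, so $J\cap K=x_3(K\cap L)=x_3K$ and $\lind K=\lind(K\cap L)=1$, giving $\max\{0,1,2\}=2$; but $\lind I(P_5)=1$. The paper only proves the inequality $\le$ (Theorem~\ref{thm_Betti_splitting}) and explicitly leaves open whether even $\max\{\lind J,\lind K\}\le\lind I$ always holds.

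For the backward direction your plan departs substantially from the paper, and there is a genuine gap. You propose a \emph{vertex} splitting guided by a two-pair of $G$; the paper instead uses an \emph{edge} splitting $I(G)=(e)+I(G\setminus e)$ where $e$ is a \emph{co}-two-pair, i.e.\ an edge of $G$ whose endpoints form a two-pair of $G^c$. This distinction matters. For a co-two-pair $e=xy$ one has $(e)\cap I(G\setminus e)=e\bigl(L+I(H)\bigr)$ with $H$ the induced subgraph on $V(G)\setminus(N(x)\cup N(y))$, and crucially $\inmat(H)\le\inmat(G)-1$; together with Lemma~\ref{lem_copair_remove} and Lemma~\ref{lem_weaklychordal} this makes the induction on $|E(G)|$ and $\inmat(G)$ go through. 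The price is that $I(G)=(e)+I(G\setminus e)$ is \emph{not} automatically a Betti splitting; the paper devotes the bulk of the proof of Theorem~\ref{thm_weaklychordal} (Steps~2--3, via Lemma~\ref{lem_persistence} and a multigraded lcm-lattice localization) to showing the relevant Tor maps vanish. Your vertex splitting is automatically a Betti splitting (Corollary~\ref{cor_y-splitting}), which is a genuine advantage, but the $P_5$ example shows that the intersection ideal $I(G\setminus v)\cap(x_j:x_j\in N(v))$ need not have linearity defect $\le\inmat(G)-2$, so the inductive bound fails for a generic choice of $v$. Your proposal does not specify which vertex to pick or why the intersection ideal would then be controlled, and the invocation of a two-pair of $G$ (a non-edge) gives no obvious mechanism for this --- the structural leverage in the paper comes precisely from working in $G^c$. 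As written, the backward direction does not close.
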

This suggests (to us) a surprising, if little exploited connection between the linearity defect of edge ideals and combinatorics of graphs. The hard part of Theorem \ref{thm_main1}, the sufficiency, follows from a more general statement about the linearity defect of edge ideals of weakly chordal graphs. Thus the second main result of our paper, which was inspired by the aforementioned theorem of Woodroofe \cite[Theorem 14]{W}, is
\begin{thm}[See Theorem \ref{thm_weaklychordal}]
\label{thm_main2}
Let $G$ be a weakly chordal graph with at least one edge. Then there is an equality $\lind I(G)=\inmat(G)-1.$
\end{thm}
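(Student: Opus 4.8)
The plan is to prove the two inequalities $\lind I(G)\ge\inmat(G)-1$ and $\lind I(G)\le\inmat(G)-1$ separately: the lower bound by restricting to an induced subgraph, and the upper bound by an induction that reduces $G$, through splitting off connected components and filling in carefully chosen \emph{two-pairs}, to a complete graph, where Fr\"oberg's theorem provides the base case.

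For the lower bound, let $g=\inmat(G)$ and choose an induced $gK_2$ on a vertex subset $W$. The inclusion $k[W]\hookrightarrow R$ is a graded algebra retract, split by the map sending $x_i\mapsto 0$ for $i\notin W$, and $I(G)\cap k[W]=I(gK_2)$; since the linearity defect cannot increase under this kind of restriction to an induced subgraph (a fact to be recorded in the background section), it is enough to observe that $\lind_{k[W]}I(gK_2)=g-1$. The latter is immediate: $I(gK_2)$ is a complete intersection of $g$ quadrics, its minimal free resolution is the Koszul complex, and every differential of the Koszul complex has degree $2$, so the linear part has zero differential in homological degrees $\ge 1$, whence $\lind I(gK_2)=g-1$.

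For the upper bound I would induct first on $\lvert V(G)\rvert$ and then on the number of non-edges of $G$. If $\inmat(G)=1$, then $\overline G$ is chordal (no induced $C_4$ in $\overline G$, as that would be an induced $2K_2$ in $G$, and no induced larger cycle since $G$ is weakly chordal), so by Fr\"oberg's theorem $I(G)$ has a $2$-linear resolution and $\lind I(G)=0=\inmat(G)-1$; this is the base case, and it includes all complete graphs. If $G=G_1\sqcup G_2$ is disconnected (after discarding isolated vertices, which affect neither side of the claimed equality), say with both parts having an edge, then $R/I(G)\cong\bigl(R_1/I(G_1)\bigr)\otimes_k\bigl(R_2/I(G_2)\bigr)$; the minimal free resolution of the right-hand side is the tensor product of the two minimal resolutions, whose linear part is the tensor product of the linear parts, so by the K\"unneth formula $\lind\bigl(R/I(G)\bigr)=\lind\bigl(R_1/I(G_1)\bigr)+\lind\bigl(R_2/I(G_2)\bigr)$, and since $\inmat$ is additive over components, induction on the number of components closes this case. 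Finally suppose $G$ is connected and not complete. Since $G$ is weakly chordal it has a two-pair, and the crucial combinatorial input is that one can choose a two-pair $\{u,v\}$ for which $G':=G+uv$ is still weakly chordal \emph{and} $\inmat(G')=\inmat(G)$. Then $I(G')=I(G)+(uv)$ is a Betti splitting in the sense of Francisco--H\`a--Van Tuyl, so the minimal resolution of $I(G')$ is the mapping cone of $F^{(uv)\cap I(G)}\to F^{I(G)}\oplus F^{(uv)}$; applying the linear-part functor (which commutes with mapping cones) and reading off the long exact sequence gives
\[
\lind I(G)\ \le\ \max\Bigl\{\lind I(G'),\ \lind\bigl((uv)\cap I(G)\bigr)\Bigr\}.
\]
Here $\lind I(G')\le\inmat(G')-1=\inmat(G)-1$ by the inductive hypothesis, since $G'$ has one fewer non-edge. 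For the second term, $(uv)\cap I(G)\cong\bigl(I(G):uv\bigr)(-2)$, and a direct computation gives $I(G):uv=(N_G(u))+(N_G(v))+I(H')$ with $H'=G[V\setminus(N_G[u]\cup N_G[v])]$ on a set of variables disjoint from $N_G(u)\cup N_G(v)$; reducing modulo the linear forms $\{x_w:w\in N_G(u)\cup N_G(v)\}$, which form a regular sequence also modulo $I(H')$, and passing over the free variables $u,v$ — operations that leave the linearity defect unchanged — we obtain $\lind\bigl((uv)\cap I(G)\bigr)=\lind I(H')=\inmat(H')-1\le\inmat(G)-1$, using the inductive hypothesis for the smaller weakly chordal graph $H'$. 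Combining this with the lower bound yields the equality $\lind I(G)=\inmat(G)-1$.

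The main obstacle is the combinatorial lemma used in the connected case: that a connected non-complete weakly chordal graph always admits a two-pair whose filling both preserves weak chordality and leaves the induced matching number unchanged. Filling an arbitrary two-pair may strictly decrease $\inmat$ (for instance, filling the two-pair $\{b,d\}$ of the path $a$--$b$--$c$--$d$--$e$ drops the induced matching number from $2$ to $1$), so the choice really matters; and a naive variable splitting of $I(G)$ directly runs into an off-by-one error that the two-pair structure is designed to repair. Establishing this lemma, and more generally tracking induced matching numbers and weak chordality through the two operations ``fill in a two-pair'' and ``delete a closed neighbourhood'', is the technical heart of the argument, and it is here that the structure theory of weakly chordal graphs — existence of two-pairs (Hayward--Ho\`ang--Maffray) and the effect of filling one in — is indispensable.
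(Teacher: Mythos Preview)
Your lower bound argument is correct and matches the paper's (Corollary~\ref{cor_indmatch}). The inductive architecture for the upper bound is also reasonable, and your handling of the base case $\inmat(G)=1$ and of disconnected $G$ is fine. However, in the connected non-complete case there are two genuine gaps, and the more serious one you have not identified.

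\textbf{The Betti splitting is unjustified, and can fail.} You assert that $I(G')=I(G)+(uv)$ is a Betti splitting whenever $\{u,v\}$ is a two-pair of $G$, but you give no argument. This is false in general: take $G=P_5$ with vertices $b$--$u$--$c$--$v$--$a$ and the two-pair $\{u,v\}$. Then $(uv)\cap I(G)=uv\,(a,b,c)$, and under the inclusion into $I(G)=(bu,uc,cv,va)$ the Koszul syzygy $b\cdot(uva)-a\cdot(uvb)$ maps to the syzygy $(-av,0,0,bu)$, which is a \emph{minimal} generator (in degree $4$) of the first syzygy module of $I(G)$. Hence $\Tor_1(k,(uv)\cap I(G))\to\Tor_1(k,I(G))$ is nonzero and the decomposition is not a Betti splitting. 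You might hope that the splitting holds for your carefully chosen two-pair (one preserving $\inmat$), but this is not argued and is far from automatic. In the paper the analogous statement---that $I(G)=(e)+I(G\setminus e)$ is a Betti splitting for every \emph{co}-two-pair $e$---is the main technical content of the proof (statement (S1) of Theorem~\ref{thm_weaklychordal}); it is established by a simultaneous induction on $|E(G)|$ and $\inmat(G)$ that also carries along the regularity formula, combined with an lcm-lattice restriction to small induced subgraphs and Lemma~\ref{lem_persistence}. Your setting is genuinely different: a two-pair of $G$ need not become a co-two-pair of $G'=G+uv$ (in the $P_5$ example above, $uv$ is not a co-two-pair of $G'$), so the paper's result cannot simply be quoted.

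\textbf{The combinatorial lemma.} You correctly flag as an obstacle the existence of a two-pair $\{u,v\}$ with $\inmat(G+uv)=\inmat(G)$, but you do not prove it, and I know of no reference. The paper sidesteps this entirely by working with co-two-pairs and \emph{removing} an edge rather than adding one: for \emph{any} co-two-pair $e$ one has $\inmat(G\setminus e)\le\inmat(G)$ (Lemma~\ref{lem_copair_remove}) and $G\setminus e$ is again weakly chordal (Lemma~\ref{lem_weaklychordal}), so no careful choice is needed and the induction on $|E(G)|$ proceeds cleanly.

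In short, your route and the paper's are dual (fill a two-pair versus remove a co-two-pair), but the paper's direction is the one that runs without extra combinatorial hypotheses, and in either direction the Betti splitting is the crux of the argument---not something one can assert in passing.
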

Our proof of Theorem \ref{thm_main2} takes a cue from the theory of Betti splittings due to Francisco, H\`a, and Van Tuyl \cite{FHV}. Let $I$ be a monomial ideal of $R$, and $\Gc(I)$ its set of uniquely determined minimal monomial generators. Let $J, K$ be monomial ideals contained in $I$ such that $\Gc(J)\cap \Gc(K)=\emptyset$ and $\Gc(I)=\Gc(J)\cup \Gc(K)$, so that in particular $I=J+K$. The decomposition of $I$ as $J+K$ is called a {\em Betti splitting} if for all $i\ge 0$ and all $j\ge 0$, the following equality of Betti numbers
\begin{equation}
\label{eq_Betti}
\beta_{i,j}(I)=\beta_{i,j}(J)+\beta_{i,j}(K)+\beta_{i-1,j}(J\cap K) 
\end{equation}
holds. In \cite{FHV}, \cite{HV1}, \cite{HV2}, Betti splittings were used to study Betti numbers and regularity of edge ideals and more general squarefree monomial ideals. What makes Betti splittings useful to the study of linearity defect is the following fact, proven in \cite[Proposition 2.1]{FHV}: the decomposition $I=J+K$ is a Betti splitting if and only if the natural maps $\Tor^R_i(k, J\cap K) \longrightarrow \Tor^R_i(k, J)$ and $\Tor^R_i(k, J\cap K) \longrightarrow \Tor^R_i(k, K)$ are trivial for all $i\ge 0$. It is proved in Proposition \ref{prop_zeromap} of Section \ref{sect_Betti_split} that we have a good control of the linearity defect along short exact sequences for which certain induced maps of $\Tor$ have strong vanishing properties. This result implies that Betti splittings are suitable for bounding the linearity defect (Theorem \ref{thm_Betti_splitting}). 

The second component of the proof of Theorem \ref{thm_main2} comes from the structure theory of weakly chordal graphs. Specifically, we use the existence of {\it co-two-pair} edges \cite{HHM} in a weakly chordal graph. The main work of Section \ref{sect_weaklychordal} is to show that any co-two-pair in a weakly chordal graph gives rise to a Betti splitting of the corresponding edge ideal; see Theorem \ref{thm_weaklychordal}. A variety of techniques is employed to prove the last result, including the theory of lcm-lattice in monomial resolutions developed in \cite{GPW} and \cite{PV}.

The paper is organized as follows. We start by recalling the necessary background in Section \ref{sect_background}. Section \ref{sect_induced_matchings} provides a lower bound for the linearity defect of edge ideals in terms of the induced matching number of the associated graphs. This bound plays a role in the proof of the necessity part of Theorem \ref{thm_main1} and in Theorem \ref{thm_main2}. The main result of Section \ref{sect_Betti_split} is that linearity defect behaves well with respect to Betti splittings (Theorem \ref{thm_Betti_splitting}). In Section \ref{sect_weaklychordal}, we compute the linearity defect of edge ideals associated to weakly chordal graphs. In particular we prove in Section \ref{sect_weaklychordal} Theorem \ref{thm_main2} introduced above. Section \ref{sect_cycles} concerns with the computation of linearity defect for the simplest non-weakly-chordal graphs, namely cycles of length $\ge 5$. (For complements of cycles, the computation was done in \cite[Theorem 5.1]{OkaYan}.)  Besides applications to the theory of regularity and projective dimension of edge ideals, in Section \ref{sect_applications}, we prove Theorem \ref{thm_main1}. In Section \ref{sect_dependence_characteristic}, we study the dependence of the linearity defect of edge ideals on the characteristic of the field $k$, and propose some open questions. 

Since the linearity defect was originally defined in \cite{HIy} for modules over local rings, we state some of our results in this greater generality; see for example Proposition \ref{prop_zeromap}, Theorem \ref{thm_Betti_splitting} and Proposition \ref{prop_Koszul_decomp}. The reader may check easily that the analogues of these results for graded algebras are also true, using the same method.
\section{Background}
\label{sect_background}
We assume that the reader is familiar with the basic of commutative algebra; a good reference for which is \cite{BH}. For the theory of free resolutions, we refer to \cite{Avr}.
\subsection{Linearity defect}
Let $(R,\mm,k)$ be a standard graded $k$-algebra with the graded maximal ideal $\mm$, or a noetherian local ring with the maximal ideal $\mm$ and the residue field $k$. By a ``standard graded $k$-algebra'', we mean that $R$ is a commutative algebra over $k$, $R$ is $\N$-graded with $R_0=k$, and $R$ is generated over $k$ by finitely many elements of degree $1$. Sometimes, we omit $k$ and write $(R,\mm)$ for simplicity.

Let us define the linearity defect for complexes of modules over local rings; the modification for graded algebras is straightforward. Let $(R,\mm)$ be a noetherian local ring, and $M$ be a chain complex of $R$-modules with homology $H(M)$ degreewise finitely generated and bounded below, i.e. $H_i(M)$ is finitely generated for all $i\in \Z$ and $H_i(M)=0$ for $i\ll 0$. Let $F$ be its minimal free resolution:
\[
F: \cdots \longrightarrow  F_i \longrightarrow F_{i-1} \longrightarrow \cdots \longrightarrow F_1 \longrightarrow F_0 \longrightarrow F_{-1} \longrightarrow \cdots.
\]
In particular, up to isomorphism of complexes, $F$ is the unique complex of finitely generated free $R$-modules that fulfills the following conditions:
\begin{enumerate}
\item $\Img(F_i) \subseteq \mm F_{i-1}$ for all $i\in \Z$,
\item there is a morphism of complexes $F\longrightarrow M$ which induces isomorphism on homology.
\end{enumerate}
The complex $F$ can be chosen such that $F_i=0$ for all $i<\inf M:=\inf\{i: H_i(M)\neq 0\}$. See the monograph of Roberts \cite{R} for more details.

The complex $F$ admits a filtration $(\Fc^iF)_{i\ge 0}$, where $\Fc^iF$ is the complex
\[
\Fc^iF: \cdots \longrightarrow F_{i+1} \longrightarrow  F_i \longrightarrow \mm F_{i-1} \longrightarrow \cdots \longrightarrow \mm^{i-1}F_1 \longrightarrow \mm^i F_0 \longrightarrow \mm^{i+1}F_{-1} \longrightarrow \cdots,
\]
with the differential being induced by that of $F$. The complex 
$$
\linp^R F=\bigoplus_{i\in \Z} \frac{\Fc^iF}{\Fc^{i+1}F}
$$
is called the {\em linear part} of $F$. It is a complex of graded free $\gr_{\mm}R$-modules. Here, as usual, 
\[
\gr_{\mm}R=\bigoplus_{i\ge 0} \frac{\mm^i R}{\mm^{i+1}R}
\]
is the associated graded ring of $R$ with respect to the $\mm$-adic filtration. By a straightforward computation, one has for all $i \ge 0$ an isomorphism of graded $\gr_{\mm}R$-modules:
\begin{equation}
\label{eq_linp}
 (\linp^R F)_i=\left(\gr_{\mm}F_i\right)(-i) \cong \frac{F_i}{\mm F_i} \otimes_{R/\mm}(\gr_{\mm}R)(-i).
\end{equation}
It is worth pointing out here a simple procedure for computing the linear part of minimal free resolutions if $R$ is a graded algebra. Now $M$ is a complex of graded $R$-modules with $H(M)$ degreewise finitely generated and bounded below, and $F$ is the minimal graded free resolution of $M$. It is not hard to see that $\linp^R F$ has the same underlying module structure as $F$ itself, and the matrices of differentials of $\linp^R F$ are obtained from that of $F$ by replacing each non-zero entry of degree at least $2$ by zero. 

For example, let $R=k[x,y]$ and $I=(x^2,xy^2,y^4)$, then a minimal graded free resolution of $I$ is
\[
F: \, 0\longrightarrow 
R(-4) \oplus R(-5) \xrightarrow{\left(
	\begin{array}{c c c}
	-y^2 & 0\\
	x & - y^2\\   	
	0  & x\\
	\end{array}\right)} R(-2)\oplus R(-3) \oplus R(-4) \longrightarrow 0.
\]
The linear part of $F$ is the following complex
\[
\linp^R F:\, 0\longrightarrow 
R(-1)^2 \xrightarrow{\left(
	\begin{array}{c c c}
	0 & 0\\
	x & 0\\   	
	0  & x\\
	\end{array}\right)} R^3 \longrightarrow 0.
\]

The {\it linearity defect} of $M$ over $R$, denoted by $\lind_R M$, is defined as follows:
\[
\lind_R M =\sup\{i: H_i(\linp^R F) \neq 0\}.
\]
By convention, $\lind_R M=0$ if $M$ is the trivial module $0$. Except for the proof of Proposition \ref{prop_zeromap}, we will work solely with linearity defect of modules. The notion of linearity defect was introduced by Herzog and Iyengar in 2005; see their paper \cite{HIy} for more information about the homological significance of the complex $\linp^R F$ and the linearity defect.

In the above example, we can verify that $H_1(\linp^R F)=0$ and $\lind_R I=0$. Following \cite{HIy}, modules which have linearity defect zero are called {\em Koszul modules}.

\subsection{Castelnuovo-Mumford regularity} 
\label{sect_regularity}
Let $(R,\mm)$ now be a standard graded $k$-algebra, and $M$ a finitely generated graded $R$-module. The Castelnuovo-Mumford regularity of $M$ over $R$ is 
\[
\reg_R M=\sup\{j-i: \Tor^R_i(k,M)_j\neq 0\}.
\]
Ahangari Maleki and Rossi \cite[Proposition 3.5]{AR} showed that if $\lind_R M=\ell <\infty$ then the regularity of $M$ can be computed using the first $\ell$ steps in its minimal free resolution:
\[
\reg_R M=\sup\{j-i: \Tor^R_i(k,M)_j\neq 0 ~\text{and $i\le \ell$} \}.
\]
In particular, if $M$ is a Koszul $R$-module then $\reg_R M$ equals the maximal degree of a minimal homogeneous generator of $M$.

We say that $M$ has a linear resolution if for some $d\in \Z$, $M$ is generated in degree $d$ and $\reg_R M=d$. We also say that $M$ has a $d$-linear resolution in that case.

R\"omer \cite{Ro} proved that if $R$ is a Koszul algebra, i.e. $\reg_R k=0$, then the following statements are equivalent:
\begin{enumerate}
\item $M$ is a Koszul module;
\item $M$ is componentwise linear, namely for every $d\in \Z$, the submodule $M_{\left<d\right>}$ of $M$ generated by homogeneous elements of degree $d$ has a $d$-linear resolution.
\end{enumerate}
A proof of this result can be found in \cite[Theorem 5.6]{IyR}. From R\"omer's theorem, one gets immediately that if $R$ is a Koszul algebra, and $M$ is generated in a single degree $d$, then $\lind_R M=0$ if and only if $M$ has a $d$-linear resolution.

\subsection{Graphs and their edge ideals}
We always mean by a graph $G$ a pair $(V(G), E(G))$, where $V(G)=\{x_1,\ldots,x_n\}$ is a finite set (with $n\ge 1$), and $E(G)$ is a collection of non-ordered pairs $\{x_i,x_j\}$ where $1\le i, j\le n, i\neq j$. Elements in $V(G)$ are called {\it vertices} of $G$, while elements in $E(G)$ are termed {\it edges} of $G$. In this paper, we do not consider infinite graphs, nor do we consider graphs with loops or multiple edges between two vertices. Most of the material on graph theory that we need can be found in \cite{V}. 

For a graph $G$ and $x\in V(G)$, a vertex $y\in V(G)$ is called a {\it neighbor} of $x$ if $\{x,y\} \in E(G)$. We also say that $y$ is {\it adjacent} to $x$ in that case. We denote by $N(x)$ the set of neighbors of $x$. 

The graph $G^c$ with the same vertices as $G$ and with edge set consisting of non-ordered pairs $\{x,y\}$ of non-adjacent vertices of $G$, is called the {\it complement graph} of $G$. 

By a {\it cycle}, we mean a graph with vertices $x_1,\ldots,x_m$ (with $m\ge 3$) and edges $x_1x_2,\ldots,$ $x_{m-1}x_m$, $x_mx_1$. In that case, $m$ is called the length of the cycle. The complement graph of a cycle (of length $m$) is called an {\it anticycle} (of length $m$). The path of length $m-1$ (where $m\ge 2$), denoted by $P_m$, is the graph with vertices $x_1,\ldots,x_m$ and edges $x_1x_2,\ldots,x_{m-1}x_m$. 

A subgraph of $G$ is a graph $H$ such that $V(H)\subseteq V(G)$ and $E(H) \subseteq E(G)$. For a set of edges $E\subseteq E(G)$, the deletion of $G$ to $E$ is the subgraph of $G$ with the same vertex set as $G$ and with the edge set $E(G)\setminus E$. If $E$ consists of a single edge $e$, we denote $G\setminus E$ simply by $G\setminus e$.

A subgraph $H$ of $G$ is called an {\em induced subgraph} if for every pair $(x,y)$ of vertices of $H$, $x$ and $y$ are adjacent in $H$ if and only if they are adjacent in $G$. Clearly for any subset $V$ of $V(G)$, there exists a unique induced subgraph of $G$ with the vertex set $V$.
For a subset of vertices $V\subseteq V(G)$, the deletion of $G$ to $V$, denoted by $G\setminus V$ is the induced subgraph of $G$ on the vertex set $V(G)\setminus V$. If $V$ consists of a single vertex $x$, then we denote $G\setminus V$ simply by $G\setminus x$. 

For a graph $G$ on the vertex set $\{x_1,\ldots,x_n\}$, let $R$ be the polynomial ring on $n$ variables, also denoted by $x_1,\ldots, x_n$. The {\it edge ideal} of $G$ is 
\[
I(G)=(x_ix_j: \text{$\{x_i,x_j\}$ is an edge of $G$}) \subseteq R.
\]
We will usually refer to the symbol $x_ix_j$ (where $i\neq j, 1\le i, j\le n$) both as an edge of $G$ and as a monomial of $R$. Instead of writing $\reg_R I(G)$ and $\lind_R I(G)$, we will usually omit the obvious base ring and write simply $\reg I(G)$ and $\lind I(G)$. We refer to \cite[Chapter 6]{Vi} for a rich source of information about the theory of edge ideals.

\subsection{Induced matchings}
The graph with $2m$ vertices $x_1,\ldots,x_m, y_1,\ldots,y_m$ (where $m\ge 1$) and exactly $m$ edges $x_1y_1,\ldots,x_my_m$ is called the $mK_2$ graph. 

Let $G$ be a graph. A {\it matching} of $G$ is a collection of edges $e_1,\ldots,e_m$ (where $m\ge 1$) such that no two of them have a common vertex. The number $m$ is then called the {\it size} of the matching. A matching $e_1,\ldots,e_m$ of $G$ is an {\it induced matching} if the induced subgraph of $G$ on the vertex set $e_1 \cup \cdots \cup e_m$ is the $mK_2$ graph. The induced matching number $\inmat(G)$ is defined as the largest size of an induced matching of $G$.

\subsection{Weakly chordal graphs}
\begin{defn}
A graph $G$ is said to be {\it weakly chordal} if every induced cycle in $G$ or in $G^c$ has length at most $4$. $G$ is said to be {\it chordal} if it does not contain any induced cycle of length greater than $3$. We say that $G$ is {\it co-chordal} if $G^c$ is a chordal graph.
\end{defn}
\begin{ex}
\label{ex_chordal}
Every chordal graph is weakly chordal: for such a graph $G$, clearly $G$ has no induced cycle of length greater than $3$. Moreover, $G^c$ has no induced cycle of length at least $5$. Indeed, if $G^c$ contains an induced cycle of length $5$, then taking the complement, we see that $G$ contains an induced cycle of length $5$. If $G^c$ contains an induced cycle of length at least $6$, then $G^c$ contains an induced $2K_2$. This implies that $G$ contains an induced cycle of length $4$. In any case, we get a contradiction.

From the definition, $G$ is weakly chordal if and only if $G^c$ is weakly chordal. Hence the above arguments also show that every co-chordal graph is weakly chordal.
\end{ex}

\begin{thm}[Fr\"oberg's theorem {\cite{Fr}}]
\label{thm_Froeberg}
Let $G$ be a graph. Then the following statements are equivalent:
\begin{enumerate}[\quad \rm(i)]
\item $\lind I(G)=0$;
\item $G$ is co-chordal;
\item $G$ is weakly chordal and $\inmat(G)=1$.
\end{enumerate}
\end{thm}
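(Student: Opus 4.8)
The plan is to decouple the triple equivalence into the two biconditionals (i)$\Leftrightarrow$(ii) and (ii)$\Leftrightarrow$(iii), which are of entirely different natures. We may assume $G$ has at least one edge; the edgeless case is degenerate and we set it aside.

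\emph{Step 1: (i)$\Leftrightarrow$(ii).} This amounts to a reformulation of Fr\"oberg's classical theorem. Since $R$ is a polynomial ring it is Koszul, and $I(G)$ is generated in the single degree $2$; hence by R\"omer's theorem recalled in Section~\ref{sect_regularity}, $\lind I(G)=0$ if and only if $I(G)$ has a $2$-linear resolution. As $I(G)$ is generated in degree $2$ one has $\reg I(G)\ge 2$ always, so the latter condition is equivalent to $\reg I(G)=2$, which by Fr\"oberg's theorem \cite{Fr} holds precisely when $G^c$ is chordal, i.e. when $G$ is co-chordal.

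\emph{Step 2: (ii)$\Leftrightarrow$(iii).} Everything here rests on the elementary observation that the complement of the $2K_2$ graph on four vertices is the $4$-cycle $C_4$. Consequently $G$ contains an induced $2K_2$ if and only if $G^c$ contains an induced $C_4$; since $G$ has an edge, this says that $\inmat(G)=1$ is equivalent to $G^c$ having no induced $C_4$. If (ii) holds, then $G^c$ is chordal, hence has no induced cycle of length $\ge 4$ at all; in particular no induced $C_4$, so $\inmat(G)=1$, and by Example~\ref{ex_chordal} the graph $G^c$ is weakly chordal, hence so is $G$ (weak chordality is self-complementary). Conversely, if (iii) holds, then $G^c$ is weakly chordal, so it has no induced cycle of length $\ge 5$, while $\inmat(G)=1$ rules out an induced $C_4$ in $G^c$; combining, $G^c$ has no induced cycle of length $\ge 4$, that is, $G^c$ is chordal and $G$ is co-chordal.

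\emph{Main obstacle.} There is essentially none: Theorem~\ref{thm_Froeberg} is a repackaging of existing results, and the only points requiring care are the dictionary ``linearity defect zero $\equiv$ $2$-linear resolution'' (immediate from Koszulness of $R$ together with R\"omer's theorem) and the small-graph bookkeeping with complements. The genuinely new work of the paper begins with the analysis of linearity defect $1$, for which the present theorem serves as the base case.
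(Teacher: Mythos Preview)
Your proof is correct and follows essentially the same route as the paper: the paper also reduces (i)$\Leftrightarrow$(ii) to Fr\"oberg's classical theorem via the observation from Section~\ref{sect_regularity} that $\lind I(G)=0$ is equivalent to $I(G)$ having a $2$-linear resolution, and dismisses (ii)$\Leftrightarrow$(iii) as ``not hard to see.'' You have simply supplied the details the paper leaves implicit.
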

\begin{proof}
Since $I(G)$ is generated in degree $2$, (i) is equivalent to the condition that $I(G)$ has $2$-linear resolution; see Section \ref{sect_regularity}. Hence that (i) $\Longleftrightarrow$ (ii) is a reformulation of Fr\"oberg's theorem. It is not hard to see that (ii) $\Longleftrightarrow$ (iii).
\end{proof}

\section{Linearity defect and induced matchings}
\label{sect_induced_matchings}
Firstly we have the following simple change-of-rings statement.
\begin{lem}
\label{lem_basechange}
Let $(R,\mm)\to (S,\nn)$ be a morphism of noetherian local rings such that $\gr_{\nn}S$ is a flat $\gr_{\mm}R$-module. Let $M$ be a finitely generated $R$-module. Then there is an equality
\[
\lind_R M =\lind_S (M\otimes_R S).
\]
\end{lem}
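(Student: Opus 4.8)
The plan is to reduce everything to a statement about the linear parts of minimal free resolutions and to use the flatness hypothesis to move freely between $\gr_{\mm}R$ and $\gr_{\nn}S$. First I would take a minimal free resolution $F$ of $M$ over $R$ and argue that $F \otimes_R S$ is a minimal free complex over $S$: minimality is equivalent to $\Img(F_i \otimes_R S) \subseteq \nn (F_{i-1}\otimes_R S)$, which follows because the differentials of $F$ have entries in $\mm$ and $\mm S \subseteq \nn$. Exactness of $F \otimes_R S$ — i.e.\ that it is a resolution of $M \otimes_R S$ — will need the flatness of $S$ over $R$, and here I would want to extract ordinary flatness of $S$ as an $R$-module from the flatness of $\gr_{\nn}S$ over $\gr_{\mm}R$ (a standard fact: a filtered module whose associated graded is flat, with the filtrations compatible, is itself flat, provided the filtrations are separated and exhaustive, which they are for the $\mm$-adic and $\nn$-adic ones on noetherian local rings). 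Thus $G := F \otimes_R S$ is \emph{the} minimal free resolution of $M \otimes_R S$ over $S$.

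Next I would compare the linear parts. Using the description \eqref{eq_linp}, $(\linp^R F)_i \cong (F_i/\mm F_i) \otimes_{R/\mm} (\gr_{\mm}R)(-i)$ and similarly $(\linp^S G)_i \cong (G_i/\nn G_i)\otimes_{S/\nn}(\gr_{\nn}S)(-i)$. Since $G_i = F_i \otimes_R S$ and $S/\nn$ is a quotient of $R/\mm$-algebras through the residue-field map (both are $k$ if the map is local with trivial residue extension; in general one uses that $F_i/\mm F_i$ is a free $R/\mm$-module and $G_i/\nn G_i = (F_i/\mm F_i)\otimes_{R/\mm}(S/\nn)$), the key point is that the differential of $\linp^S G$ is obtained from that of $\linp^R F$ by applying $- \otimes_{\gr_{\mm}R} \gr_{\nn}S$. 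Concretely, the matrices of $\linp^R F$ have entries that are degree-one elements of $\gr_{\mm}R$, and base-changing along $\gr_{\mm}R \to \gr_{\nn}S$ turns them into the matrices of $\linp^S G$; one must check that degree-$\ge 2$ entries of the differential of $F$ (which are killed when forming $\linp^R F$) still map into degree-$\ge 2$ elements of $\gr_{\nn}S$, so that forming the linear part commutes with the base change — this uses $\mm S \subseteq \nn$ together with $\mm^2 S \subseteq \nn^2$, again guaranteed by compatibility of the filtrations.

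Granting this, we get an isomorphism of complexes of graded $\gr_{\nn}S$-modules
\[
\linp^S G \;\cong\; (\linp^R F) \otimes_{\gr_{\mm}R} \gr_{\nn}S.
\]
Because $\gr_{\nn}S$ is \emph{flat} over $\gr_{\mm}R$, taking homology commutes with this base change, so $H_i(\linp^S G) \cong H_i(\linp^R F)\otimes_{\gr_{\mm}R}\gr_{\nn}S$ for all $i$. Finally, faithfulness is what yields equality rather than just an inequality of linearity defects: a local homomorphism $(R,\mm)\to(S,\nn)$ is automatically faithful on the level of $\gr$ in the sense that $\nn S = \nn \neq S$, and more precisely $\gr_{\nn}S$ is a faithfully flat $\gr_{\mm}R$-module because it is flat and the closed point of $\Spec \gr_{\mm}R$ is in the image (the degree-zero part $R/\mm \to S/\nn$ is an extension of fields, hence faithfully flat, and $\gr_{\nn}S \neq 0$). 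Therefore $H_i(\linp^R F) = 0$ if and only if $H_i(\linp^S G)=0$, and taking the supremum over $i$ gives $\lind_R M = \lind_S(M\otimes_R S)$.

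The main obstacle I anticipate is the bookkeeping in the second step: verifying cleanly that ``forming the linear part'' and ``base changing along $\gr_{\mm}R \to \gr_{\nn}S$'' commute. This requires being careful that the $\mm$-adic filtration on $F$ pushes forward to (a refinement of, but ultimately agreeing with) the $\nn$-adic filtration on $F\otimes_R S$ after tensoring, which is exactly where the hypothesis that $\gr_{\nn}S$ is flat over $\gr_{\mm}R$ — rather than merely $S$ flat over $R$ — is used; without it the associated graded of the pushed-forward filtration need not be the honest $\linp^S G$. Everything else (minimality, exactness, passing flatness from $\gr$ to the rings, faithful flatness from the residue field extension) is routine.
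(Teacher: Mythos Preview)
Your proposal is correct and follows essentially the same route as the paper: pass from flatness of $\gr_{\nn}S$ over $\gr_{\mm}R$ to flatness of $S$ over $R$, so that $F\otimes_R S$ is the minimal free resolution of $M\otimes_R S$; establish the isomorphism $\linp^S(F\otimes_R S)\cong(\linp^R F)\otimes_{\gr_{\mm}R}\gr_{\nn}S$ via the description \eqref{eq_linp}; then invoke faithful flatness to conclude. The paper's justification of faithful flatness is terser than yours (it simply observes that a flat morphism of standard graded algebras is faithfully flat), and it leaves the compatibility of the isomorphisms with differentials to the reader rather than unpacking the filtration bookkeeping you flag as the main obstacle, but the substance is identical.
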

\begin{proof}
Let $F$ be the minimal free resolution of $M$ over $R$. Since $\gr_{\mm}R\to \gr_{\nn}S$ is a flat morphism, so is the map $R\to S$. Hence $F\otimes_R S$ is a minimal free resolution of $M\otimes_R S$ over $S$. Observe that we have an isomorphism of complexes of $\gr_{\nn}S$-modules
\begin{equation}
\label{eq_complexiso}
 \linp^S(F\otimes_R S) \cong \linp^R F \otimes_{\gr_{\mm}R} (\gr_{\nn}S).
\end{equation}
Indeed, at the level of modules, we wish to show
\begin{equation}
\label{eq_iso}
\left(\linp^S(F\otimes_R S)\right)_i \cong \left(\linp^R F\right)_i \otimes_{\gr_{\mm}R} (\gr_{\nn}S)
\end{equation}
as graded $\gr_{\nn}S$-modules for each $i\ge 0$. On the one hand, there is the following chain in which the first isomorphism follows from \eqref{eq_linp},
\begin{align*}
(\linp^R F)_i \otimes_{\gr_{\mm}R} (\gr_{\nn}S) &\cong \left(\frac{F_i}{\mm F_i} \otimes_{R/\mm} (\gr_{\mm}R)(-i)\right) \otimes_{\gr_{\mm}R} (\gr_{\nn}S) \\
&\cong \frac{F_i}{\mm F_i} \otimes_{R/\mm} (\gr_{\nn}S)(-i)\\
&\cong \left(\frac{F_i}{\mm F_i} \otimes_{R/\mm} (S/\nn)\right)\otimes_{S/\nn} (\gr_{\nn}S)(-i).
\end{align*}
On the other hand, also from \eqref{eq_linp}, there is an isomorphism
\[
\left(\linp^S (F\otimes_R S)\right)_i \cong \frac{F_i\otimes_R S}{\nn(F_i\otimes_R S)} \otimes_{S/\nn} (\gr_{\nn}S)(-i).
\]
Since $F_i$ is a free $R$-module, we have a natural isomorphism of $(S/\nn)$-modules
\[
\frac{F_i\otimes_R S}{\nn(F_i\otimes_R S)} \cong \frac{F_i}{\mm F_i} \otimes_{R/\mm} (S/\nn).
\]
Hence the isomorphisms of type \eqref{eq_iso} were established. We leave it to the reader to check that such isomorphisms give rise to an isomorphism at the level of complexes.

Since $\gr_{\mm}R\to \gr_{\nn}S$ is a morphism of standard graded algebras, it is also faithfully flat. Hence the isomorphism \eqref{eq_complexiso} gives us
\[
\sup \{i: H_i(\linp^R F)\neq 0\} = \sup \{i: H_i(\linp^S (F\otimes_R S))\neq 0\},
\]
which is the desired conclusion.
\end{proof}
\begin{cor}
\label{cor_omit_subscripts}
Let $(R,\mm)\to (S,\nn)$ be a flat extension of standard graded $k$-algebras, and $I$ a homogeneous ideal of $R$. Then there are equalities
\begin{align*}
\lind_R I &=\lind_S (IS),\\
\reg_R I &=\reg_S (IS).
\end{align*}
\end{cor}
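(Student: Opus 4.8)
The plan is to reduce both equalities to Lemma \ref{lem_basechange}, or rather to its graded analogue, which the authors have already noted holds by the same argument. The key observation is that for a standard graded $k$-algebra $R$ with irrelevant maximal ideal $\mm=R_+$ one has $\mm^i/\mm^{i+1}\cong R_i$, so that $\gr_{\mm}R\cong R$ as graded rings; likewise $\gr_{\nn}S\cong S$. Hence the hypothesis of Lemma \ref{lem_basechange}, that $\gr_{\nn}S$ be flat over $\gr_{\mm}R$, is in the present situation literally the assumption that $S$ is flat over $R$, which we are given.

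First I would record the minimality statement. Let $F$ be the minimal graded free resolution of $I$ over $R$. Because $R\to S$ is flat, $F\otimes_R S$ is an $S$-free resolution of $I\otimes_R S$, and flatness applied to the inclusion $I\hookrightarrow R$ gives $I\otimes_R S\cong IS$. Since the extension is a homomorphism of graded $k$-algebras we have $\mm S\subseteq \nn$, so $\Img(F_i)\subseteq \mm F_{i-1}$ forces $\Img(F_i\otimes_R S)\subseteq \nn(F_{i-1}\otimes_R S)$; thus $F\otimes_R S$ is the minimal graded free resolution of $IS$ over $S$. From this the regularity equality is immediate: if $F_i=\bigoplus_j R(-j)^{\beta_{i,j}}$ then $F_i\otimes_R S=\bigoplus_j S(-j)^{\beta_{i,j}}$, whence $\beta_{i,j}(IS)=\beta_{i,j}(I)$ for all $i,j$ and therefore $\reg_S(IS)=\reg_R I$.

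For the linearity defect I would either quote the graded version of Lemma \ref{lem_basechange} directly, or repeat its short argument: the matrices of the differentials of $\linp^S(F\otimes_R S)$ are obtained from those of $F\otimes_R S$ by zeroing out the entries of degree $\ge 2$, and this produces exactly the complex $(\linp^R F)\otimes_R S$. Since $S$ is flat over $R$, this is $\linp^R F\otimes_R S$, so $H_i(\linp^S(F\otimes_R S))\cong H_i(\linp^R F)\otimes_R S$; as $S$ is in fact faithfully flat over $R$, this vanishes if and only if $H_i(\linp^R F)$ does. Taking suprema over $i$ yields $\lind_S(IS)=\lind_R I$.

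The only point requiring a little care, and it is the analogue of the one genuinely substantive step in Lemma \ref{lem_basechange}, is the passage from ``$S$ flat over $R$'' to ``$S$ faithfully flat over $R$'', which is what makes the vanishing of the homology of the linear part detectable after base change. Here I would invoke the standard fact that a flat homomorphism of standard graded $k$-algebras is automatically faithfully flat, since $\mm S\subseteq \nn\subsetneq S$ shows $\mm S\neq S$. Everything else in the argument is routine bookkeeping with graded free modules.
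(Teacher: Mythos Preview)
Your proposal is correct and follows essentially the same approach as the paper: reduce to (the graded analogue of) Lemma~\ref{lem_basechange} by observing that $\gr_{\mm}R\cong R$ and $\gr_{\nn}S\cong S$ for standard graded $k$-algebras, so that the flatness hypothesis there becomes exactly the given flatness of $S$ over $R$. The paper's proof is terser, but your added details on minimality, the Betti-number argument for regularity, and the faithful-flatness step are all sound elaborations of the same idea.
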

\begin{proof}
For the first equality, use the graded analog of Lemma \ref{lem_basechange} for the map $R \to S$ and note that $\gr_{\mm}R \cong R$. The second equality follows from the same line of thought and is even simpler.
\end{proof}
Hence below, especially in Sections \ref{sect_weaklychordal} and \ref{sect_cycles}, whenever we work with a polynomial ring $S$, a polynomial subring $R$ and an ideal $I$ of $R$, there is no danger of confusion in writing simply $\lind I$ instead of $\lind_R I$ or $\lind_S (IS)$. The same remark applies to the regularity.

Now we prove that for any graph $G$, $\lind I(G)$ is bounded below by $\inmat(G)-1$.  Although this inequality is simple, it becomes an equality for a non-trivial class of graphs -- weakly chordal graphs. This will be proved in Theorem \ref{thm_weaklychordal}.

Recall that a ring homomorphism $\theta: R\to S$ is called an {\it algebra retract} if there exists a local homomorphism $\vphi: S \to R$ such that $\vphi\circ \theta$ is the identity map of $R$. In such a case, $\vphi$ is called the {\it retraction map} of the algebra retract $\theta$. If $R, S$ are graded rings, we require $\theta, \vphi$ to preserve the gradings. The following result can be proved in the same manner as \cite[Lemma 4.7]{NgV}. Note that the proof of {\it ibid.} depends critically on a result of \c{S}ega \cite[Theorem 2.2]{Se} which was stated for local rings, but holds in the graded case as well.
\begin{lem}
\label{lem_retract}
Let $\theta: (R,\mm)\to (S,\nn)$ be an algebra retract of standard graded $k$-algebras with the retraction map $\vphi: S\to R$. Let $I\subseteq \mm$ be a homogeneous ideal of $R$. Let $J\subseteq \nn$ be a homogeneous ideal containing $\theta(I)S$ such that $\vphi(J)R=I$. Then there are inequalities
\begin{align*}
\lind_R (R/I) &\le \lind_S (S/J),\\
\lind_R I &\le \lind_S J,\\
\reg_R I &\le \reg_S J.
\end{align*}
\end{lem}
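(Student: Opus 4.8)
The plan is to follow the proof of \cite[Lemma 4.7]{NgV} step by step; I only indicate its shape. The engine is \c{S}ega's theorem \cite[Theorem 2.2]{Se}, used in its graded incarnation: it detects the inequality $\lind N\le t$, for a finitely generated graded module $N$ over a standard graded $k$-algebra and an integer $t\ge 0$, through the vanishing of a family of homomorphisms of the $k$-spaces $\Tor_i(k,-)$ (for $i>t$) that is constructed functorially from the filtration of $N$ by powers of the graded maximal ideal --- these are the maps whose associated graded pieces are the differentials of the linear part of the minimal resolution of $N$. The proof in \emph{loc.\ cit.} is written for noetherian local rings, but carries over verbatim to standard graded algebras (one only inserts the word ``graded'' throughout), as asserted in the statement; the single property of these maps I shall use is their naturality with respect to homomorphisms of (algebra, module) pairs.

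First I would unwind the retract. Since $\vphi\circ\theta=\id_R$, the homomorphism $\theta$ is injective, $\vphi$ is surjective, and $R\cong S/\mathfrak{a}$ with $\mathfrak{a}=\ker\vphi\subseteq\nn$; moreover $\theta(\mm^j)\subseteq\nn^j$ and $\vphi(\nn^j)=\mm^j$ for every $j\ge 0$, since $\theta,\vphi$ preserve gradings. The hypotheses $\theta(I)S\subseteq J$ and $\vphi(J)R=I$ say precisely that $\theta$ and $\vphi$ descend to homomorphisms $R/I\to S/J$ and $S/J\to R/I$ composing, in this order, to $\id_{R/I}$; together with the displayed inclusions they moreover produce, for each $j\ge 0$, a $\theta$-semilinear homomorphism $\mm^j(R/I)\to\nn^j(S/J)$ and a $\vphi$-semilinear homomorphism $\nn^j(S/J)\to\mm^j(R/I)$ whose composite in this order is the identity of $\mm^j(R/I)$ and which commute with the filtration inclusions. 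The same assertions hold with $I,J$ replacing $R/I,S/J$; here the full equality $\vphi(J)R=I$ (not merely a containment) is what is used.

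Now apply $\Tor_i(k,-)$ and the base-change maps along the ring homomorphisms $\theta$ and $\vphi$: since $\vphi\theta=\id_R$, these base-change maps compose (up to the canonical identifications) to the identity, so $\Tor^R_i(k,\mm^j(R/I))$ becomes a graded direct summand of $\Tor^S_i(k,\nn^j(S/J))$, compatibly with the maps induced by the filtration inclusions --- and likewise with $I,J$ in place of $R/I,S/J$. For $j=0$ the resulting degree-preserving split monomorphism $\Tor^R_i(k,R/I)\hookrightarrow\Tor^S_i(k,S/J)$ immediately gives $\reg_R(R/I)\le\reg_S(S/J)$, and similarly $\reg_R I\le\reg_S J$. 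For the linearity defects one instead feeds the summand relation into \c{S}ega's criterion: the maps occurring in that criterion over $R$ are direct summands of their counterparts over $S$, so their vanishing for $i>\lind_S(S/J)$ (valid over $S$ by the criterion) forces the same vanishing over $R$, and the criterion applied over $R$ yields $\lind_R(R/I)\le\lind_S(S/J)$ and $\lind_R I\le\lind_S J$. The step needing genuine care --- and the reason this warrants a written proof rather than a bare citation --- is the verification that the base-change maps along $\theta$ and $\vphi$ are natural with respect to the filtration inclusions and really do split one another after applying $\Tor_i(k,-)$; this is routine but must be set up carefully, and it is exactly where the relations $\theta(\mm^j)\subseteq\nn^j$, $\vphi(\nn^j)=\mm^j$ and the hypotheses on $I,J$ are brought to bear. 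Transcribing \c{S}ega's \cite[Theorem 2.2]{Se} to the graded setting is, as noted, an entirely routine further point.
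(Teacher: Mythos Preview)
Your proposal is correct and follows exactly the approach the paper indicates: the paper does not give a proof but simply refers to \cite[Lemma 4.7]{NgV} and notes that \c{S}ega's criterion \cite[Theorem 2.2]{Se} is the key input, which is precisely what you do. Your sketch of how the retract splits the relevant $\Tor$-maps compatibly with the $\mm$-adic and $\nn$-adic filtrations is accurate, and the deduction of the three inequalities from this splitting is the intended argument.
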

\begin{cor}
\label{cor_inducedsubgr}
Let $G$ be a graph and $H$ an induced subgraph. Then there are inequalities 
\begin{align*}
\reg I(H) &\le \reg I(G),\\
\lind I(H) &\le \lind I(G).
\end{align*}
\end{cor}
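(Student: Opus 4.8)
The plan is to realize the passage from $G$ to an induced subgraph $H$ as an algebra retract and then apply Lemma~\ref{lem_retract}. Write $V(G)=\{x_1,\dots,x_n\}$, let $W=V(H)\subseteq V(G)$ be the vertex set of $H$, let $S=k[x_1,\dots,x_n]$ be the polynomial ring on the vertices of $G$ with graded maximal ideal $\nn$, and let $R=k[x_i:x_i\in W]$ be the polynomial subring on the vertices of $H$ with graded maximal ideal $\mm$. Let $\theta\colon R\to S$ be the inclusion. This is an algebra retract of standard graded $k$-algebras: the graded $k$-algebra homomorphism $\vphi\colon S\to R$ with $\vphi(x_i)=x_i$ for $x_i\in W$ and $\vphi(x_i)=0$ for $x_i\notin W$ is a retraction, since $\vphi\circ\theta=\id_R$.

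Next I would set $I=I(H)\subseteq R$ and $J=I(G)\subseteq S$; both are generated in degree $2$, so $I\subseteq\mm$ and $J\subseteq\nn$. The two compatibility conditions of Lemma~\ref{lem_retract} then need to be checked. First, since $H$ is a subgraph of $G$, every edge of $H$ is an edge of $G$, so each minimal generator $x_ix_j$ of $I(H)$ is a minimal generator of $I(G)$, whence $\theta(I)S\subseteq J$. Second, applying $\vphi$ to a generator $x_ix_j$ of $I(G)$ gives $x_ix_j$ when both $x_i,x_j\in W$ and $0$ otherwise, so $\vphi(J)R$ is generated by the monomials $x_ix_j$ with $\{x_i,x_j\}\in E(G)$ and $x_i,x_j\in W$; because $H$ is an \emph{induced} subgraph, these are precisely the edges of $H$, and therefore $\vphi(J)R=I$. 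This last verification is the only place the induced hypothesis enters, and it is exactly where the argument would break down for an arbitrary subgraph; it is the crux of the proof, though it is not technically difficult.

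Having checked the hypotheses, Lemma~\ref{lem_retract} applied to $\theta$, $I=I(H)$ and $J=I(G)$ yields $\lind_R I(H)\le\lind_S I(G)$ and $\reg_R I(H)\le\reg_S I(G)$. Finally, by Corollary~\ref{cor_omit_subscripts} (equivalently, by the convention adopted immediately after it), the linearity defect and the regularity of an edge ideal are independent of the chosen polynomial overring, so $\lind_R I(H)=\lind I(H)$, $\lind_S I(G)=\lind I(G)$, and likewise for $\reg$. Combining these equalities with the displayed inequalities gives $\reg I(H)\le\reg I(G)$ and $\lind I(H)\le\lind I(G)$. The degenerate case $W=\emptyset$ (so $R=k$ and $I(H)=0$) needs no special treatment, as the argument above goes through verbatim; alternatively one simply notes that then $\lind I(H)=0$ and $\reg I(H)=-\infty$, so both inequalities are trivial.
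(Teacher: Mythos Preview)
Your proof is correct and is exactly the approach the paper intends: the paper's proof reads simply ``Straightforward application of Lemma~\ref{lem_retract},'' and you have supplied the details of that application (the retract $\theta\colon R\hookrightarrow S$ with retraction $\vphi$ killing the variables outside $W$, and the verification that $\theta(I)S\subseteq J$ and $\vphi(J)R=I$, the latter using that $H$ is induced). Invoking Corollary~\ref{cor_omit_subscripts} to drop the subscripts is also in line with the paper's standing conventions.
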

\begin{proof}
Straightforward application of Lemma \ref{lem_retract}.
\end{proof}
A direct consequence is the following statement. The bound for the regularity in the next corollary is well-known; see \cite[Lemma 2.2]{K}, \cite[Lemma 7]{W}.
\begin{cor}
\label{cor_indmatch}
Let $G$ be a graph. Then there are inequalities 
\begin{align*}
\lind I(G) &\ge \inmat(G)-1,\\
\reg I(G) &\ge \inmat(G)+1.
\end{align*}
\end{cor}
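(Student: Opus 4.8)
The plan is to reduce everything to the single model graph $gK_2$, where $g=\inmat(G)$, and then to read both invariants off an explicit minimal free resolution. We may assume $g\ge 1$ (equivalently, that $G$ has at least one edge; otherwise the statement is vacuous). By the very definition of the induced matching number, $G$ contains an induced subgraph $H$ isomorphic to $gK_2$. Corollary \ref{cor_inducedsubgr} then yields
\[
\lind I(G)\ge \lind I(gK_2)\qquad\text{and}\qquad \reg I(G)\ge \reg I(gK_2),
\]
so it suffices to prove $\lind I(gK_2)=g-1$ and $\reg I(gK_2)=g+1$.

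To compute these, write $S=k[x_1,y_1,\dots,x_g,y_g]$, so that $I:=I(gK_2)=(x_1y_1,\dots,x_gy_g)\subseteq S$. Since the generators involve pairwise disjoint sets of variables, $x_1y_1,\dots,x_gy_g$ is a regular sequence of quadrics in $S$; hence the minimal graded free resolution $K_\bullet$ of $S/I$ is the Koszul complex on these elements, and the minimal graded free resolution $F_\bullet$ of $I$ is its truncation, $F_i=K_{i+1}$ for $i\ge 0$. Concretely,
\[
F_i=S(-2(i+1))^{\binom{g}{i+1}}\qquad(0\le i\le g-1),
\]
and the matrix of each differential $F_i\to F_{i-1}$ has every nonzero entry equal (up to sign) to one of the monomials $x_jy_j$, in particular of degree exactly $2$.

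The regularity is now immediate: the nonzero graded Betti numbers of $I$ sit in bidegrees $(i,2(i+1))$ with $0\le i\le g-1$, so $\reg I(gK_2)=\max\{2(i+1)-i:0\le i\le g-1\}=g+1$. For the linearity defect, apply the recipe for the linear part recalled in Section \ref{sect_background}: one replaces every entry of $F_\bullet$ of degree $\ge 2$ by zero. Since all entries have degree $2$, the complex $\linp^S F$ has zero differential, hence $H_i(\linp^S F)=(\linp^S F)_i$, which is the (nonzero) free module in homological position $i$ precisely when $0\le i\le g-1$. Therefore $\lind I(gK_2)=g-1$, and combining with the displayed inequalities finishes the proof.

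There is no genuine obstacle here; the only point requiring a word of care rather than work is that the induced subgraph $gK_2$ naturally produces an ideal in the polynomial subring on its $2g$ vertices, not in the full polynomial ring attached to $G$. This causes no trouble: Corollary \ref{cor_inducedsubgr} is stated directly for the induced-subgraph ideal, and in any case Corollary \ref{cor_omit_subscripts} guarantees that both $\lind$ and $\reg$ are unchanged under flat extension to a larger polynomial ring. So the entire content of the corollary sits in the transparent structure of the Koszul resolution of $I(gK_2)$.
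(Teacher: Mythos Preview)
Your proof is correct and follows the same route as the paper: reduce to the induced $gK_2$ via Corollary \ref{cor_inducedsubgr}, then compute both invariants for $I(gK_2)$. The paper dispatches the latter step with the phrase ``follows from direct inspection,'' whereas you spell out the Koszul-complex computation explicitly; the added detail is welcome but not a departure in strategy.
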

\begin{proof}
Let $m=\inmat(G)$ and $x_1y_1,\ldots,x_my_m$ be an induced matching of $G$. Then by Corollary \ref{cor_inducedsubgr},
\[
\lind I(G)\ge \lind (x_1y_1,\ldots,x_my_m)=m-1=\inmat(G)-1,
\]
where the second equality follows from direct inspection. The same proof works for the regularity.
\end{proof}
It is of interest to find good upper bounds for the linearity defect of edge ideals in terms of the combinatorics of their associated graphs. A trivial bound exists: using Taylor's resolution \cite[Section 7.1]{HH2}, we have that $\lind I(G)$ is not larger than the number of edges of $G$ minus $1$. 
\section{Morphisms which induce trivial maps of Tor}
\label{sect_Betti_split}
\subsection{Exact sequence estimates}
The following result will be employed several times in the sequel. It was proved using \c{S}ega's interpretation of the linearity defect in terms of Tor modules \cite[Theorem 2.2]{Se}.
\begin{lem}[Nguyen, {\cite[Proposition 2.5 and Corollary 2.10]{Ng}}]
\label{lem_exactseq}
Let $(R,\mm,k)$ be a noetherian local ring. Let $0\longrightarrow M \longrightarrow P \longrightarrow N\longrightarrow 0$ be an exact sequence of finitely generated $R$-modules. Denote 
\begin{align*}
d_P&=\inf\{m\ge 0: \Tor^R_i(k,M) \longrightarrow \Tor^R_i(k,P) ~\text{is the trivial map for all $i\ge m$}\},\\
d_N &=\inf\{m\ge 0: \Tor^R_i(k,P) \longrightarrow \Tor^R_i(k,N) ~\text{is the trivial map for all $i\ge m$}\},\\
d_M&=\inf\{m\ge 0: \Tor^R_{i+1}(k,N) \longrightarrow \Tor^R_i(k,M) ~\text{is the trivial map for all $i\ge m$}\}.
\end{align*}
\begin{enumerate}[\quad \rm(i)]
\item There are inequalities
\begin{align*}
\lind_R P &\le \max\{\lind_R M, \lind_R N, \min\{d_M, d_N\}\},\\
\lind_R N &\le \max\{\lind_R M+1, \lind_R P, \min\{d_M+1,d_P\}\},\\
\lind_R M &\le \max\{\lind_R P, \lind_R N-1, \min\{d_P,d_N-1\}\}.
\end{align*}
\item If moreover $P$ is a free module, then $\lind_R M=\lind_R N-1$ if $\lind_R N\ge 1$ and $\lind_R M=0$ if $\lind_R N=0$.
\end{enumerate}
\end{lem}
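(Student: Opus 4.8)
The plan is to derive both parts from \c{S}ega's $\Tor$-theoretic reinterpretation of the linearity defect \cite[Theorem~2.2]{Se}, together with the long exact sequence of $\Tor^R_\bullet(k,-)$ attached to $0\to M\to P\to N\to 0$; part (ii) is the easier boundary case, and I would treat it first and directly.

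For (ii): since $P$ is free and $P/M\cong N$, Schanuel's lemma shows that $M$ agrees, up to free direct summands, with the first syzygy module $\Omega^1_R N$ of $N$; because adjoining a free summand changes neither the linearity defect of a module nor its minimal free resolution in positive homological degrees, I may assume $M=\Omega^1_R N$. Letting $G$ be the minimal free resolution of $N$, the reindexed truncation $G_{\ge 1}$ (with $G_1$ placed in homological degree $0$) is then the minimal free resolution of $M$, and a direct comparison of the $\mm$-adic filtrations $\Fc^\bullet$ shows that $\linp^R(G_{\ge 1})$ coincides, from homological degree $1$ on and together with its differentials, with $\linp^R G$ from degree $2$ on; hence $H_i(\linp^R(G_{\ge 1}))\cong H_{i+1}(\linp^R G)$ for all $i\ge 1$. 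Since the degree-$0$ homology of the linear part of the minimal resolution of a nonzero module is always nonzero, this yields $\lind_R M=\max\{\lind_R N-1,\,0\}$ (the case $M=0$, \ie $N$ free, being trivial), which is the asserted formula.

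For (i): I would feed the long exact $\Tor^R_\bullet(k,-)$-sequence of $0\to M\to P\to N\to 0$ into \c{S}ega's criterion. The numbers $d_M,d_N,d_P$ are designed exactly so that, in the ranges they delimit, this long exact sequence splits into short exact sequences --- for instance $0\to\Tor^R_i(k,P)\to\Tor^R_i(k,N)\to\Tor^R_{i-1}(k,M)\to 0$ once $i>d_P$, and analogously with $d_M$ and with $d_N$. The role of \c{S}ega's theorem is to promote these vanishing statements about ordinary $\Tor$-maps to vanishing statements about the linear-part complexes $\linp^R F$; plugging the latter into the long exact homology sequences of the linear parts shows that in each of the three relevant sequences the connecting homomorphism vanishes above the appropriate threshold, and each of the three inequalities then follows from the standard fact that, along a short exact sequence, two of $\lind_R M,\lind_R P,\lind_R N$ bound the third --- the thresholds $d_\bullet$ appearing shifted by $1$ precisely when a connecting homomorphism is involved, with a crude estimate absorbing whatever defect survives below the threshold.

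The step I expect to be the main obstacle is this promotion: one must verify that the bare triviality of the maps $\Tor^R_i(k,M)\to\Tor^R_i(k,P)$ and their analogues really does make the pertinent sequences of linear-part complexes short exact in the stated range --- not merely exact up to the linearity defects already present --- and then keep track correctly of the several off-by-one shifts, which is exactly what distinguishes the three inequalities from one another. Invoking \c{S}ega's Theorem~2.2 is what permits one to bypass a direct, and much more delicate, comparison of the three minimal free resolutions; the remaining bookkeeping, although fiddly, is routine.
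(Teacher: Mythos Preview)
The paper does not actually prove this lemma: it is quoted verbatim from \cite[Proposition~2.5 and Corollary~2.10]{Ng}, so there is no in-paper argument to compare against. I can only assess your sketch on its own merits and against what the citation suggests.

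Your treatment of part~(ii) is correct and essentially self-contained. The reduction to $M=\Omega^1_R N$ via Schanuel and the observation that the reindexed truncation $G_{\ge 1}$ has linear part matching that of $G$ in shifted degree are both sound, and the bookkeeping at homological degree~$0$ is handled cleanly by the remark that $H_0(\linp^R F)\neq 0$ for nonzero modules.

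For part~(i), you have identified the right ingredients---\c{S}ega's $\Tor$-characterization of the linearity defect and the long exact sequence of $\Tor^R_\bullet(k,-)$---and this is consistent with the paper's own remark that the result ``was proved using \c{S}ega's interpretation of the linearity defect in terms of Tor modules.'' However, your description of the mechanism is not right. There is no ``long exact homology sequence of the linear parts'': the linear parts $\linp^R F_M$, $\linp^R F_P$, $\linp^R F_N$ do not in general sit in a short exact sequence of complexes, so no such homology sequence is available to plug anything into. What \c{S}ega's criterion actually gives you is that $\lind_R X\le d$ is equivalent to the vanishing of certain maps between $\Tor$-groups of successive $\mm$-adic layers of $X$; the argument in \cite{Ng} proceeds by checking these vanishing conditions for one module using the long exact $\Tor$-sequence together with the assumed bounds on the other two and the thresholds $d_M,d_N,d_P$. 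That is genuinely a $\Tor$-level computation, not a comparison of linear-part complexes. Your sentence ``the remaining bookkeeping, although fiddly, is routine'' is therefore optimistic: you have not yet isolated the correct object to which the bookkeeping applies. Once you replace the nonexistent exact sequence of linear parts with the direct verification of \c{S}ega's vanishing criterion, the argument does go through, and the off-by-one shifts you anticipate arise exactly where you say they do.
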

\begin{rem}
Unfortunately, in general without the correcting terms $d_M, d_N, d_P$, none of the ``simplified'' inequalities
\begin{align*}
\lind_R P &\le \max\{\lind_R M, \lind_R N \},\\
\lind_R N &\le \max\{\lind_R M+1, \lind_R P \},\\
\lind_R M &\le \max\{\lind_R P, \lind_R N-1\}.
\end{align*}
is true. See \cite[Example 2.9]{Ng} for details.
\end{rem}

We state now the main technical result of this section. Although the second inequality will not be employed much in the sequel, it might be of independent interest.
\begin{prop}
\label{prop_zeromap}
Let $(R,\mm,k)$ be a noetherian local ring. Let $M \xlongrightarrow{\phi} P$ be an injective morphism of finitely generated $R$-modules, and $N=\Coker \phi$. Assume that $\Tor^R_i(k,M)\xrightarrow{\Tor^R_i(k,\phi)} \Tor^R_i(k,P)$ is the trivial map for all $i\ge \max\{\lind_R M,\lind_R P-1\}$. Then there are inequalities:
\begin{align*}
\lind_R N &\le \max\{\lind_R M+1,\lind_R P\},\\
\lind_R P &\le \max\{\lind_R M,\lind_R N\}.
\end{align*}
If additionally, the map $\Tor^R_i(k,\phi)$ is trivial for all $i\ge 0$, then there is one further inequality
\[
\lind_R M\le \max\{\lind_R P, \lind_R N-1\}.
\]
\end{prop}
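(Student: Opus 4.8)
The plan is to deduce Proposition~\ref{prop_zeromap} from the exact sequence estimates of Lemma~\ref{lem_exactseq} applied to the short exact sequence
\[
0 \longrightarrow M \xlongrightarrow{\phi} P \longrightarrow N \longrightarrow 0,
\]
after extracting enough information about the connecting maps of $\Tor$ from the triviality hypothesis on $\Tor^R_i(k,\phi)$. The key observation is that the long exact sequence
\[
\cdots \longrightarrow \Tor^R_{i+1}(k,N) \xrightarrow{\partial_{i+1}} \Tor^R_i(k,M) \xrightarrow{\Tor^R_i(k,\phi)} \Tor^R_i(k,P) \longrightarrow \Tor^R_i(k,N) \longrightarrow \cdots
\]
forces relationships among the numbers $d_M, d_N, d_P$ of Lemma~\ref{lem_exactseq} once $\Tor^R_i(k,\phi)$ vanishes in the relevant range. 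Concretely, if $\Tor^R_i(k,\phi)=0$ for all $i\ge s$, then for such $i$ the long exact sequence breaks into short exact sequences $0\to \Tor^R_i(k,P)\to \Tor^R_i(k,N)\to \Tor^R_i(k,M[-1])$-type pieces; in particular $\partial_{i+1}\colon \Tor^R_{i+1}(k,N)\to \Tor^R_i(k,M)$ is surjective for $i\ge s$, and the map $\Tor^R_i(k,P)\to \Tor^R_i(k,N)$ is injective for $i\ge s$. Surjectivity of $\partial_{i+1}$ does not by itself make it the zero map, so the real content is to feed the vanishing of $\Tor^R_i(k,\phi)$ into the three inequalities of Lemma~\ref{lem_exactseq}(i) and simplify the correction terms $\min\{d_M,d_N\}$, $\min\{d_M+1,d_P\}$, $\min\{d_P,d_N-1\}$.

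First I would prove the first inequality, $\lind_R N \le \max\{\lind_R M+1,\lind_R P\}$. The hypothesis is that $\Tor^R_i(k,\phi)$ is trivial for all $i\ge s_0:=\max\{\lind_R M,\lind_R P-1\}$, which says exactly $d_P\le s_0$ in the notation of Lemma~\ref{lem_exactseq}. The second inequality of Lemma~\ref{lem_exactseq}(i) gives
\[
\lind_R N \le \max\{\lind_R M+1,\lind_R P,\min\{d_M+1,d_P\}\} \le \max\{\lind_R M+1,\lind_R P, s_0\},
\]
and since $s_0=\max\{\lind_R M,\lind_R P-1\}\le \max\{\lind_R M+1,\lind_R P\}$, the first claim follows. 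For the second inequality, $\lind_R P\le\max\{\lind_R M,\lind_R N\}$, I would use the first inequality of Lemma~\ref{lem_exactseq}(i): $\lind_R P\le \max\{\lind_R M,\lind_R N,\min\{d_M,d_N\}\}$. Here I need $\min\{d_M,d_N\}\le\max\{\lind_R M,\lind_R N\}$. The bound $d_P\le s_0$ does not directly control $d_M$ or $d_N$, so I would instead argue via the exactness: for $i\ge s_0$, since $\Tor^R_i(k,\phi)=0$, the map $\Tor^R_i(k,P)\to\Tor^R_i(k,N)$ is injective, hence $d_N\le s_0\le\max\{\lind_R M,\lind_R N\}$ (using $s_0\le\max\{\lind_R M,\lind_R P-1\}$ and noting we may assume $\lind_R P>\max\{\lind_R M,\lind_R N\}$ for a contradiction, which would force $s_0=\lind_R P-1<\lind_R P$, a genuine constraint to be checked carefully). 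The third inequality, $\lind_R M\le\max\{\lind_R P,\lind_R N-1\}$ under the stronger hypothesis that $\Tor^R_i(k,\phi)=0$ for all $i\ge0$, is the cleanest: now $d_P=0$, so the third inequality of Lemma~\ref{lem_exactseq}(i), $\lind_R M\le\max\{\lind_R P,\lind_R N-1,\min\{d_P,d_N-1\}\}$, immediately gives $\min\{d_P,d_N-1\}\le d_P=0\le\lind_R P$, and we are done.

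The main obstacle I anticipate is the bookkeeping in the second inequality, where the hypothesis only gives triviality of $\Tor^R_i(k,\phi)$ for $i\ge s_0$ rather than for all $i$, and $s_0$ itself depends on $\lind_R P$ — the very quantity being bounded. The argument has a mildly self-referential flavor and must be handled by a clean case split: either $\lind_R P\le\max\{\lind_R M,\lind_R N\}$ and there is nothing to prove, or $\lind_R P>\max\{\lind_R M,\lind_R N\}$, in which case $s_0=\lind_R P-1$ and one must show that the long exact sequence in the range $i\ge\lind_R P-1$ — combined with $\lind_R N<\lind_R P$, i.e.\ $H_i(\linp^R(\text{resolution of }N))=0$ for $i\ge\lind_R P$ via \c{S}ega's Tor-interpretation of the linearity defect — forces a contradiction. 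Once this case analysis is set up, each individual step reduces to plugging $d_P\le s_0$ (or $d_N\le s_0$, deduced from injectivity of $\Tor^R_i(k,P)\to\Tor^R_i(k,N)$ for $i\ge s_0$) into Lemma~\ref{lem_exactseq}(i) and simplifying, which is routine.
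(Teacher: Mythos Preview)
Your treatment of the first and third inequalities is correct and matches the paper: both follow immediately from Lemma~\ref{lem_exactseq}(i) once you observe $d_P\le s_0=\max\{\lind_R M,\lind_R P-1\}$ (respectively $d_P=0$).

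The second inequality, however, has a genuine error. You write that since $\Tor^R_i(k,\phi)=0$ for $i\ge s_0$, the map $\Tor^R_i(k,P)\to\Tor^R_i(k,N)$ is injective, ``hence $d_N\le s_0$''. But $d_N$ is by definition the threshold above which $\Tor^R_i(k,P)\to\Tor^R_i(k,N)$ is the \emph{zero} map; injectivity is the opposite constraint. In fact, vanishing of $\Tor^R_i(k,\phi)$ in the range $i\ge s_0$ forces $\Tor^R_i(k,P)\to\Tor^R_i(k,N)$ to be \emph{injective} and $\partial_{i+1}\colon\Tor^R_{i+1}(k,N)\to\Tor^R_i(k,M)$ to be \emph{surjective} there, so neither $d_N$ nor $d_M$ is bounded by $s_0$. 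The first inequality of Lemma~\ref{lem_exactseq}(i) therefore does not simplify as you hope, and the vague contradiction sketch in your last paragraph does not close the gap: the $d$-bookkeeping alone is not enough.

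The paper proves this inequality by a direct argument on resolutions rather than via Lemma~\ref{lem_exactseq}(i). Assuming $\lind_R P=s>\lind_R M$, one lifts $\phi$ to a chain map $\vphi\colon F\to G$ between the minimal free resolutions of $M$ and $P$; the hypothesis gives $\vphi(F_i)\subseteq\mm G_i$ for all $i\ge s-1$, so the mapping cone $L=G\oplus F[-1]$ is \emph{minimal} in homological degrees $\ge s-1$. Picking a non-boundary cycle $\overline u$ witnessing $H_s(\linp^R G)\neq 0$, one shows by an explicit computation in the linear part---using $\lind_R M\le s-1$ to absorb the contribution from the $F$-summand---that $(\overline u,0)$ is a non-boundary cycle in $\linp^R(L_{\ge s-1})$. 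This gives linearity defect $\ge s$ for the truncated cone, and then repeated use of Lemma~\ref{lem_exactseq}(ii) along the tail of $L$ yields $\lind_R N\ge s$. That hands-on computation with $\linp^R$ is the substance of the argument and is not visible from the long exact sequence of $\Tor$.
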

\begin{proof}
We have an exact sequence of $R$-modules
\[
0\longrightarrow M \longrightarrow P \longrightarrow N \longrightarrow 0.
\]
Consider the number 
$$
d_P=\inf\{m\ge 0: \Tor^R_i(k,\phi) ~\text{is the trivial map for all $i\ge m$}\}.
$$
By Lemma \ref{lem_exactseq}, there are inequalities
\begin{align*}
\lind_R N &\le \max\{\lind_R P,\lind_R M+1,d_P\},\\
\lind_R M &\le \max\{\lind_R P,\lind_R N-1,d_P\}.
\end{align*}
By the hypothesis, $d_P \le \max\{\lind_R M,\lind_R P-1\}$. Hence $\lind_R N \le \max\{\lind_R M+1,\lind_R P\}$. In the second part of the statement, $d_P=0$, hence $\lind_R M \le \max\{\lind_R P, \lind_R N-1\}$.

It remains to show that if $d_P \le \max\{\lind_R M,\lind_R P-1\}$ then $\lind_R P\le \max\{\lind_R M,\lind_R N\}.$

If $\lind_R P\le \lind_R M$, then we are done. Assume that $\lind_R P=s\ge \lind_R M+1$, then $s\ge 1$. 
	
Let $F,G$ be the minimal free resolution of $M, P$, respectively. Let $\vphi:F\longrightarrow G$ be a lifting of $\phi: M\longrightarrow P$. By the hypothesis, $\vphi\otimes_R k$ is the zero map for all $i\ge s-1$, hence $\vphi(F_i)\subseteq \mm G_i$ for all such $i$.

The mapping cone of $\vphi$ is a free resolution of $N$. Let $L$ be this mapping cone, then
$L=G\oplus F[-1]$. Note that $L$ is not necessarily minimal. Consider the complex
\[
\xymatrixcolsep{5mm}
\xymatrixrowsep{2mm}
\xymatrix{
H=L_{\ge s-1}:  \cdots \ar[rr]&&  L_i  \ar[rr] &&  L_{i-1} \ar[rr]&& \cdots \ar[rr]&& L_s \ar[rr] && L_{s-1} \ar[rr]&& 0,
}
\]
and $U=\Coker (L_s\to L_{s-1})$. Let $\shift^{s-1} U$ denote the complex with $U$ in homological position $s-1$ and $0$ elsewhere. Then $H$ is a minimal free resolution of $\shift^{s-1} U$, as $\vphi(F_i)\subseteq \mm G_i$ for all $i\ge s-1$. 

It is enough to show that $\lind_R (\shift^{s-1} U)\ge s$, namely $\lind_R U\ge 1$. Indeed, using the fact that $\lind_R U\ge 1$ and applying repeatedly Lemma \ref{lem_exactseq}(ii) for the complex
\[
\xymatrixcolsep{5mm}
\xymatrixrowsep{2mm}
\xymatrix{
	0 \ar[rr] &&  U \ar[rr]&&  L_{s-2}  \ar[rr] &&  \cdots \ar[rr]&& L_1 \ar[rr] && L_0 \ar[rr]&& N \ar[rr] && 0,
}
\]
we obtain $\lind_R N=\lind_R U+s-1\ge s$. This gives us $\lind_R P\le \lind_R N$, as desired.

  Since $H_s(\linp^R G)\neq 0$, there exists a cycle $\overline{u}\in \mm^iG_s/(\mm^{i+1}G_s)$ in $(\linp^R G)_s$ which is not a boundary of $\linp^R G$ (where $i\ge 0$). Let us show that the cycle
$$
(\overline{u},0)\in \frac{\mm^iG_s}{\mm^{i+1}G_s} \bigoplus \frac{\mm^iF_{s-1}}{\mm^{i+1}F_{s-1}},
$$ 
in $(\linp^R H)_s$ is not a boundary of $\linp^R H$. Assume the contrary is true. Denoting by $\partial^F,\partial^G, \partial^H$ the differential of $F,G,H$, respectively, then there exists 
$$
(\overline{u'},\overline{v'})\in \frac{\mm^{i-1}G_{s+1}}{\mm^iG_{s+1}} \bigoplus \frac{\mm^{i-1}F_s}{\mm^iF_s} \subseteq (\linp^R H)_{s+1}
$$ 
such that 
\[
(\overline{u},0)=\partial^H(\overline{u'},\overline{v'})=(\partial^G(\overline{u'})+\vphi(\overline{v'}),\partial^F(\overline{v'})).
\]
We have $\partial^F(\overline{v'})=0$. But $\overline{v'}\in (\linp^R F)_s$ and $s\ge \lind_R M+1$, so $v'-\partial^F(v'')\in \mm^iF_s$ for some $v''\in \mm^{i-2}F_{s+1}$. Applying $\vphi$, we obtain
\[
\vphi(v')-\partial^G(\vphi(v'')) =\vphi(v')-\vphi(\partial^F(v''))\in \mm^i\vphi(F_s).
\]
Observe that $\vphi(F_s)\subseteq \mm G_s$ by the above argument. Hence the last chain gives $\vphi(v') \equiv \partial^G(\vphi(v''))$ modulo $\mm^ {i+1}G_s$.

Combining the last statement with the equality $\overline{u}=\partial^G(\overline{u'})+\vphi(\overline{v'})$, we have the following congruence modulo $\mm^{i+1}G_s$:
\[
0 \equiv u-\partial^G(u')-\vphi(v') \equiv u-\partial^G(u'-\vphi(v'')).
\]
Recall that $v''\in \mm^{i-2}F_{s+1}$, hence $\vphi(v'')\in \mm^{i-2}\vphi(F_{s+1})\subseteq \mm^{i-1}G_{s+1}$. The last inclusion follows from the fact that $\vphi(F_i)\subseteq \mm G_i$ for $i\ge s-1$.

Hence $\overline{u}\in \mm^iG_s/(\mm^{i+1}G_s)$ equals to the image of $\overline{u'-\vphi(v'')} \in \mm^{i-1}G_{s+1}/(\mm^iG_{s+1})$. This contradicts with the condition that $\overline{u}$ is not a boundary of $\linp^R G$. Hence $(\overline{u},0)$ is not a boundary of $\linp^R H$, and $H_s(\linp^R H)\neq 0$, as desired.

The last fact yields $\lind_R (\shift^{s-1} U)\ge s$, finishing the proof.
\end{proof}

\subsection{Betti splittings}
Let $(R,\mm, k)$ be a noetherian local ring. For a finitely generated $R$-module $M$, denote $\beta_i(M)=\dim_k \Tor^R_i(k,M)$ its $i$-th Betti number. Observe that $\beta_0(M)$ equals the minimal number of generators of $M$. The following statement is an analog of \cite[Proposition 2.1]{FHV} and admits the same proof.

\begin{lem}
\label{lem_equivalence}
Let $(R,\mm,k)$ be a noetherian local ring and $I, J, K \subseteq \mm$ are ideals such that $I=J+K$. The following statements are equivalent:
\begin{enumerate}[\quad \rm(i)]
\item for all $i\ge 0$, the natural maps $\Tor^R_i(k, J\cap K) \longrightarrow \Tor^R_i(k,J)$ and $\Tor^R_i(k, J\cap K) \longrightarrow \Tor^R_i(k, K)$ are trivial;
\item for all $i\ge 0$, the equality
\[
\beta_i(I)=\beta_i(J)+\beta_i(K)+\beta_{i-1}(J\cap K)
\]
holds;
\item the mapping cone construction for the map $J\cap K \longrightarrow J\oplus K$ yields a minimal free resolution of $I$.
\end{enumerate}
\end{lem}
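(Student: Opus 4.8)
The plan is to route everything through the Mayer--Vietoris short exact sequence
\[
0 \longrightarrow J\cap K \xrightarrow{\ \psi\ } J\oplus K \longrightarrow I \longrightarrow 0,
\]
where $\psi(c)=(c,c)$ and the second map is $(a,b)\mapsto a-b$; exactness is immediate from $I=J+K$. I would establish the cycle (i) $\Leftrightarrow$ (iii), (iii) $\Rightarrow$ (ii), (ii) $\Rightarrow$ (i). Throughout, $\beta_i(M)=\dim_k\Tor^R_i(k,M)$, and I use the canonical splitting $\Tor^R_i(k,J\oplus K)\cong \Tor^R_i(k,J)\oplus\Tor^R_i(k,K)$.

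For (i) $\Leftrightarrow$ (iii): let $F_\bullet\to J\cap K$ be a minimal free resolution and let $G_\bullet\to J\oplus K$ be the direct sum of minimal free resolutions of $J$ and of $K$, and lift $\psi$ to a chain map $\widetilde\psi\colon F_\bullet\to G_\bullet$. Since $\psi$ is injective, the mapping cone $C(\widetilde\psi)$ is a free resolution of $\Coker\psi=I$, with $C(\widetilde\psi)_i=G_i\oplus F_{i-1}$. Because $F_\bullet$ and $G_\bullet$ already have differentials with entries in $\mm$, the cone is a \emph{minimal} resolution exactly when $\widetilde\psi_i(F_i)\subseteq\mm G_i$ for all $i$, i.e.\ when $\widetilde\psi\otimes_R k=0$; and minimality of $F_\bullet,G_\bullet$ forces $\widetilde\psi\otimes_R k$ to coincide with the induced map $\Tor^R_\bullet(k,\psi)$, whose two components are precisely the maps appearing in (i). Hence the cone is minimal iff (i) holds, which is the content of (iii) $\Leftrightarrow$ (i).

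The implication (iii) $\Rightarrow$ (ii) is then just a rank count in $C(\widetilde\psi)$, namely $\beta_i(I)=\rk G_i+\rk F_{i-1}=\beta_i(J)+\beta_i(K)+\beta_{i-1}(J\cap K)$. For (ii) $\Rightarrow$ (i), apply $\Tor^R(k,-)$ to the short exact sequence and write $\delta_i\colon\Tor^R_i(k,I)\to\Tor^R_{i-1}(k,J\cap K)$ for the connecting maps and $\iota_i\colon\Tor^R_i(k,J\cap K)\to\Tor^R_i(k,J)\oplus\Tor^R_i(k,K)$ for the maps induced by $\psi$. Chasing rank--nullity through exactness at the three consecutive spots $\Tor^R_i(k,J\cap K)$, $\Tor^R_i(k,J)\oplus\Tor^R_i(k,K)$, $\Tor^R_i(k,I)$ yields, for every $i\ge 0$,
\[
\rk\delta_i+\rk\delta_{i+1}=\beta_i(I)-\beta_i(J)-\beta_i(K)+\beta_i(J\cap K),
\]
and under (ii) the right-hand side equals $\beta_{i-1}(J\cap K)+\beta_i(J\cap K)$. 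Since one always has $\rk\delta_i\le\beta_{i-1}(J\cap K)$ and $\rk\delta_{i+1}\le\beta_i(J\cap K)$, both inequalities are forced to be equalities; in particular each $\delta_{i+1}$ is surjective, so $\iota_i=0$ for all $i$, which is (i). (Read in the other direction the same computation shows (i) $\Rightarrow$ (ii), the long exact sequence then breaking into short exact ones, so one could equally close the loop that way.)

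The only delicate point I expect is the bookkeeping in the (i) $\Leftrightarrow$ (iii) step: checking with the correct signs and degree shifts that the mapping cone of a lift of an injection of modules resolves the cokernel, and that its minimality is literally the simultaneous vanishing of the two $\Tor$-maps in (i). Everything else — the direct-sum splitting of $\Tor$, the rank chase, and the forcing of equalities — is routine. This is exactly the argument of \cite[Proposition 2.1]{FHV}, which uses only long exact $\Tor$-sequences, mapping cones, and minimality of resolutions, hence transports without change to the noetherian local setting.
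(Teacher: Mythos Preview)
Your argument is correct and is precisely the proof the paper intends: the paper gives no proof of its own but simply notes that the statement ``admits the same proof'' as \cite[Proposition 2.1]{FHV}, and what you have written is exactly that argument transported to the noetherian local setting.
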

The following concept is particularly useful to our purpose. It is a straightforward generalization of the notion introduced by Francisco, H\`a, and Van Tuyl in \cite[Definition 1.1]{FHV}. Its modification for graded algebras and homogeneous ideals is routine.
\begin{defn}
\label{defn_Betti_splitting}
Let $(R,\mm,k)$ be a noetherian local ring and $I\subseteq \mm$ an ideal. A decomposition $I=J+K$ of $I$, where $J, K\subseteq \mm$ are ideals such that $\beta_0(I)=\beta_0(J)+\beta_0(K)$, is called a {\it Betti splitting} if one of the equivalent conditions in Lemma \ref{lem_equivalence} holds. 
\end{defn}
\begin{rem}
Strictly speaking, we do not need the condition $\beta_0(I)=\beta_0(J)+\beta_0(K)$ in Definition \ref{defn_Betti_splitting} because of Lemma \ref{lem_equivalence}: in part (ii) of that lemma, choosing $i=0$, we get $\beta_0(I)=\beta_0(J)+\beta_0(K)$. However, we include this condition to stress the special feature of the decomposition of $I$ as $J+K$.
\end{rem}

We record the following example of a Betti splitting for the sake of clarity.
\begin{ex}
Let $(R_1,\mm), (R_2,\nn)$ be standard graded $k$-algebras and $J\subseteq \mm, K\subseteq \nn$ be homogeneous ideals. Let $I=J+K \subseteq R=R_1\otimes_k R_2$. We claim that the decomposition $I=J+K$ is a Betti splitting.

First, notice the following fact: Let $M, N$ be finitely generated graded modules over $R_1,R_2$, respectively. Let $F_M, F_N$ be the  minimal free resolutions of $M, N$ over $R_1,R_2$. Then $F_M \otimes_k F_N$ is a minimal free resolution of $M\otimes_k N$ over $R$. Next, consider the short exact sequence
\[
0\longrightarrow JK \longrightarrow J \longrightarrow \frac{J}{JK} \cong J\otimes_k \frac{R_2}{K} \longrightarrow 0.
\]
Let $F$ and $G$ be the minimal free resolutions of $J$ and $R_2/K$ over $R_1$ and $R_2$, in that order. The map $F \longrightarrow F\otimes_k G$ naturally yields a lifting of the natural map $J \longrightarrow J\otimes_k (R_2/K)$. This implies that the map $\Tor^R_i(k,J) \longrightarrow \Tor^R_i(k, J/(JK))$ is injective for all $i\ge 0$. From the above exact sequence we get $\Tor^R_i(k, JK) \longrightarrow \Tor^R_i(k, J)$ is trivial for all $i\ge 0$. It is an elementary fact that $JK=J\cap K$ in $R$, thus the map $\Tor^R_i(k, J\cap K) \longrightarrow \Tor^R_i(k, J)$ is also trivial for $i\ge 0$. Similar arguments apply for the map   $\Tor^R_i(k, J\cap K) \longrightarrow \Tor^R_i(k, K)$, hence the decomposition $I=J+K$ is a Betti splitting.

It is not hard to show that if $R_1,R_2$ are polynomial rings and $J, K$ are monomial ideals over them, then the decomposition $I=J+K$ is an Eliahou-Kervaire splitting in the sense of \cite{FHV}.
\end{ex}
Given a Betti splitting $I=J+K$, the projective dimension and regularity (in the graded case) of $I$ can be read off from that of $J, K$ and $J\cap K$. 
\begin{cor}[See H\`a and Van Tuyl {\cite[Theorem 2.3]{HV1}}]
Let $(R,\mm)$ be a noetherian local ring and $I\subseteq \mm$ an ideal with a Betti splitting $I=J+K$. Then there is an equality
\[
\projdim_R I=\max\{\projdim_R J, \projdim_R K, \projdim_R (J\cap K)+1\}.
\]
If moreover $R$ is a standard graded $k$-algebra and $I, J, K$ are homogeneous ideals then we also have
\[
\reg_R I=\max\{\reg_R J, \reg_R K, \reg_R (J\cap K)-1\}.
\]
\end{cor}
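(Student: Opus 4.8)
The plan is to derive both equalities from the explicit description of the minimal free resolution of $I$ furnished by Lemma \ref{lem_equivalence}(iii). Let $F_J$, $F_K$, $F_{J\cap K}$ be the minimal (graded) free resolutions of $J$, $K$, $J\cap K$ over $R$. Since the minimal free resolution of a direct sum is the direct sum of the minimal free resolutions, $B:=F_J\oplus F_K$ is the minimal free resolution of $J\oplus K$, and the natural map $J\cap K\longrightarrow J\oplus K$ lifts to a morphism of complexes $F_{J\cap K}\longrightarrow B$ whose mapping cone $C$ satisfies $C_i=B_i\oplus (F_{J\cap K})_{i-1}=(F_J)_i\oplus (F_K)_i\oplus (F_{J\cap K})_{i-1}$. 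By Lemma \ref{lem_equivalence}(iii) this complex $C$ is in fact a \emph{minimal} free resolution of $I$; minimality is the key input, since it guarantees that the free modules coming from $B$ and from $F_{J\cap K}[-1]$ contribute to the resolution of $I$ with no cancellation.

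First I would read off the projective dimension. As $C$ is minimal, $\projdim_R I$ is the largest $i$ with $C_i\neq 0$, i.e. the largest $i$ for which $B_i\neq 0$ or $(F_{J\cap K})_{i-1}\neq 0$. The free modules of $B$ are nonzero exactly in homological degrees $\le\projdim_R(J\oplus K)$ and those of $F_{J\cap K}$ exactly in degrees $\le\projdim_R(J\cap K)$, so this largest index equals $\max\{\projdim_R(J\oplus K),\,\projdim_R(J\cap K)+1\}$. Combining with $\projdim_R(J\oplus K)=\max\{\projdim_R J,\,\projdim_R K\}$ gives the first formula. (When $J\cap K=0$ the decomposition is trivial, and with the usual convention $\projdim_R 0=-\infty$ the last term drops out, consistently.)

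For the graded statement I would track graded Betti numbers instead: the degree-by-degree form of $C_i=(F_J)_i\oplus(F_K)_i\oplus(F_{J\cap K})_{i-1}$ — equivalently condition (ii) of Lemma \ref{lem_equivalence} — yields $\beta_{i,j}(I)=\beta_{i,j}(J)+\beta_{i,j}(K)+\beta_{i-1,j}(J\cap K)$ for all $i,j$. Since all three summands are nonnegative, $\beta_{i,j}(I)\neq 0$ precisely when one of $\beta_{i,j}(J)$, $\beta_{i,j}(K)$, $\beta_{i-1,j}(J\cap K)$ is nonzero, so no cancellation of slopes $j-i$ occurs. Taking the supremum of $j-i$ over the nonvanishing positions, the first term contributes $\reg_R J$, the second $\reg_R K$, and the third, after reindexing $i\mapsto i-1$, contributes $\sup\{\,j-(i-1)-1:\beta_{i-1,j}(J\cap K)\neq 0\,\}=\reg_R(J\cap K)-1$; this gives $\reg_R I=\max\{\reg_R J,\,\reg_R K,\,\reg_R(J\cap K)-1\}$. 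There is no real obstacle here: the entire argument is bookkeeping on homological and internal degrees once Lemma \ref{lem_equivalence}(iii) provides a \emph{minimal} mapping cone resolution; the only point deserving a moment's care is precisely that minimality, which is what rules out any cancellation when the free modules (resp. graded Betti numbers) of $B$ and of $F_{J\cap K}[-1]$ are added together.
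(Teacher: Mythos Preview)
Your proof is correct and is essentially the approach the paper has in mind: the paper's proof reads in its entirety ``Straightforward from (the graded analog of) Lemma \ref{lem_equivalence}(ii)'', and your argument is precisely that straightforward bookkeeping, carried out via the equivalent condition (iii) of the same lemma. The only cosmetic difference is that you phrase the projective dimension computation in terms of the minimal mapping cone resolution rather than directly in terms of the Betti number identity, but since (ii) and (iii) of Lemma~\ref{lem_equivalence} are equivalent this is the same argument.
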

\begin{proof}
Straightforward from (the graded analog of) Lemma \ref{lem_equivalence}(ii).
\end{proof}
Interestingly, the linearity defect stays under control as well in the presence of a Betti splitting.
\begin{thm}
\label{thm_Betti_splitting}
Let $(R,\mm)$ be a noetherian local ring, $I\subseteq \mm$ an ideal with a Betti splitting $I=J+K$. Then there are inequalities
\begin{align*}
\lind_R I &\le \max\{\lind_R J, \lind_R K,\lind_R (J\cap K)+1\},\\
\max\{\lind_R J,\lind_R K\} &\le \max\{\lind_R (J\cap K), \lind_R I\},\\
\lind_R (J\cap K) &\le \max\{\lind_R J, \lind_R K,\lind_R I-1\}.
\end{align*}
\end{thm}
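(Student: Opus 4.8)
The plan is to derive all three inequalities at once from Proposition \ref{prop_zeromap}, applied to a single short exact sequence built from the splitting. Let $\iota_J\colon J\cap K\hookrightarrow J$ and $\iota_K\colon J\cap K\hookrightarrow K$ be the inclusions, and consider the morphism
\[
\phi=(\iota_J,-\iota_K)\colon J\cap K\longrightarrow J\oplus K.
\]
It is injective, and the map $J\oplus K\to I$, $(a,b)\mapsto a+b$, is surjective with kernel $\{(a,-a): a\in J\cap K\}=\phi(J\cap K)$. Hence there is an exact sequence
\[
0\longrightarrow J\cap K\xlongrightarrow{\phi} J\oplus K\longrightarrow I\longrightarrow 0.
\]

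First I would check the $\Tor$-vanishing hypothesis of Proposition \ref{prop_zeromap}. By additivity of $\Tor^R_i(k,-)$ we have $\Tor^R_i(k,\phi)=\bigl(\Tor^R_i(k,\iota_J),-\Tor^R_i(k,\iota_K)\bigr)$, and since $I=J+K$ is a Betti splitting, Lemma \ref{lem_equivalence}(i) says that both $\Tor^R_i(k,\iota_J)$ and $\Tor^R_i(k,\iota_K)$ are trivial for every $i\ge 0$. Thus $\Tor^R_i(k,\phi)$ is trivial for all $i\ge 0$, so all three conclusions of Proposition \ref{prop_zeromap} apply with $M=J\cap K$, $P=J\oplus K$, $N=I$:
\begin{align*}
\lind_R I &\le \max\{\lind_R(J\cap K)+1,\ \lind_R(J\oplus K)\},\\
\lind_R(J\oplus K) &\le \max\{\lind_R(J\cap K),\ \lind_R I\},\\
\lind_R(J\cap K) &\le \max\{\lind_R(J\oplus K),\ \lind_R I-1\}.
\end{align*}

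It then remains to identify $\lind_R(J\oplus K)$ with $\max\{\lind_R J,\lind_R K\}$. If $F$ and $G$ are the minimal free resolutions of $J$ and $K$, then $F\oplus G$ is the minimal free resolution of $J\oplus K$; since both the linear part functor $\linp^R(-)$ and homology commute with finite direct sums, $H_i(\linp^R(F\oplus G))\cong H_i(\linp^R F)\oplus H_i(\linp^R G)$ for all $i$, whence $\lind_R(J\oplus K)=\max\{\lind_R J,\lind_R K\}$. Substituting this into the three inequalities above yields precisely the statement of the theorem (for the local case; the graded analogue follows by the same argument).

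I do not anticipate a genuine obstacle: the only points requiring care are verifying that $\phi$ is injective with cokernel $I$, and that the Betti splitting hypothesis really forces $\Tor$-triviality of $\phi$ in \emph{all} homological degrees, so that the third and most delicate inequality of Proposition \ref{prop_zeromap} is available rather than only the first two. Both checks are immediate from Lemma \ref{lem_equivalence} and additivity of $\Tor$.
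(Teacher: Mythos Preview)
Your proposal is correct and follows essentially the same approach as the paper: the paper's proof consists of the single sentence ``Applying Proposition \ref{prop_zeromap} to the natural inclusion $J\cap K \longrightarrow J\oplus K$, we get the desired conclusion,'' and you have simply unpacked the details of that application (the exact sequence, the $\Tor$-triviality via Lemma \ref{lem_equivalence}, and the identification $\lind_R(J\oplus K)=\max\{\lind_R J,\lind_R K\}$).
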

\begin{proof}
Applying Proposition \ref{prop_zeromap} to the natural inclusion $J\cap K \longrightarrow J\oplus K$, we get the desired conclusion.
\end{proof}
We do not know any example of a Betti splitting $I=J+K$ for which the inequality $\max\{\lind_R J, \lind_R K\} \le \lind_R I$ does not hold.

A class of Betti splittings is supplied by the following
\begin{prop}
\label{prop_Koszul_decomp}
Let $(R,\mm)$ be a noetherian local ring and $I\subseteq \mm$ an ideal. Let $J,L\subseteq \mm$ be ideals and $0\neq y\in \mm$ be an element such that $I=J+yL$ and the following conditions are satisfied:
\begin{enumerate}[\quad \rm(i)]
\item $L$ is a Koszul ideal,
\item $J\cap L\subseteq \mm L$,
\item $y$ is a regular element w.r.t.~ $(R/J)$,
\item $y$ is a regular element w.r.t.~ $J$ and $L$, e.g. $R$ is a domain.
\end{enumerate}
Then the decomposition $I=J+yL$ is a Betti splitting. Moreover, there are inequalities
\begin{align*}
\lind_R I &\le \max\{\lind_R J, \lind_R (J\cap L)+1\},\\
\lind_R J &\le \max\{\lind_R (J\cap L), \lind_R I\},\\
\lind_R (J\cap L) &\le \max\{\lind_R J, \lind_R I-1\}.
\end{align*}
\end{prop}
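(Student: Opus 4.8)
The plan is to put $K:=yL$, prove that the decomposition $I=J+K$ is a Betti splitting, and then read the three inequalities off Theorem~\ref{thm_Betti_splitting}. First I would record two elementary identifications. By condition~(iv) $y$ is a nonzerodivisor on $L$, so multiplication by $y$ is an isomorphism $L\xrightarrow{\sim}yL=K$; in particular $K$ is Koszul and $\lind_R K=\lind_R L=0$. Next, $J\cap K=y(J\cap L)$: the containment $\supseteq$ is clear, and if $y\ell\in J$ with $\ell\in L$ then condition~(iii) forces $\ell\in J$, hence $\ell\in J\cap L$. Since $y$ is also a nonzerodivisor on the submodule $J\cap L$ of $L$, multiplication by $y$ is an isomorphism $J\cap L\xrightarrow{\sim}J\cap K$, so $\lind_R(J\cap K)=\lind_R(J\cap L)$. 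Granting the Betti splitting property, one substitutes $\lind_R K=0$ and $\lind_R(J\cap K)=\lind_R(J\cap L)$ into the three inequalities of Theorem~\ref{thm_Betti_splitting} and uses $\lind_R J\ge0$ to absorb the zeros in the maxima; this produces exactly the three asserted bounds. So the real task is the Betti splitting.

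By Lemma~\ref{lem_equivalence} it suffices to show that $\Tor^R_i(k,J\cap K)\to\Tor^R_i(k,J)$ and $\Tor^R_i(k,J\cap K)\to\Tor^R_i(k,K)$ are trivial for all $i\ge0$. For the map into $J$: from $J\cap L\subseteq J$ we get $J\cap K=y(J\cap L)\subseteq yJ$, and by condition~(iv) multiplication by $y$ identifies $J$ with $yJ$ in such a way that the inclusion $yJ\hookrightarrow J$ corresponds to the endomorphism ``multiplication by $y$'' of $J$. That endomorphism induces multiplication by $y$ on $\Tor^R_i(k,J)$, which is zero because $y\in\mm$ annihilates every $\Tor^R_i(k,-)$; hence $\Tor^R_i(k,J\cap K)\to\Tor^R_i(k,J)$ factors through a zero map. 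For the map into $K$: by condition~(ii), $J\cap K=y(J\cap L)\subseteq y\mm L=\mm K$, so the inclusion $J\cap K\hookrightarrow K$ has image inside $\mm K$, and $K$ is Koszul. Directly from the definition of the linear part, in internal degree $i$ the complex $\linp^R F$ reads $0\to F_i/\mm F_i\to\mm F_{i-1}/\mm^2F_{i-1}\to\cdots$, where $F$ is the minimal free resolution of $K$; hence $\lind_R K=0$ forces the map $F_i/\mm F_i\to\mm F_{i-1}/\mm^2F_{i-1}$ (induced by $\partial^F$) to be injective for every $i\ge1$. Lifting $J\cap K\hookrightarrow K$ to a morphism $\widetilde\phi\colon G\to F$ of minimal free resolutions, I would prove $\widetilde\phi_i(G_i)\subseteq\mm F_i$ for all $i$ by induction on $i$: for $i=0$ this holds because $J\cap K\subseteq\mm K$, and if it holds at $i-1$ then $\partial^F\bigl(\widetilde\phi_i(G_i)\bigr)=\widetilde\phi_{i-1}\bigl(\partial^G(G_i)\bigr)\subseteq\widetilde\phi_{i-1}(\mm G_{i-1})\subseteq\mm^2F_{i-1}$, whence the injectivity just noted gives $\widetilde\phi_i(G_i)\subseteq\mm F_i$. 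Thus $\widetilde\phi\otimes_Rk=0$ and the map on $\Tor$ vanishes.

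With both vanishing statements in hand, Lemma~\ref{lem_equivalence} shows that $I=J+yL$ is a Betti splitting, and the inequalities follow as above. I expect the only genuine obstacle to be the vanishing of $\Tor^R_i(k,J\cap K)\to\Tor^R_i(k,K)$: this is the sole place where Koszulness of $L$ (equivalently of $K$) is used, and it rests on the injectivity of the bottom component $F_i/\mm F_i\to\mm F_{i-1}/\mm^2F_{i-1}$ of the linear part of the minimal free resolution — a point one may, if preferred, isolate once and for all as a lemma: a submodule of $\mm K$ induces trivial maps on $\Tor$ into a Koszul module $K$. The remaining ingredients — the identity $J\cap K=y(J\cap L)$, the isomorphisms furnished by multiplication by $y$, and the observation that $y\in\mm$ acts as $0$ on $\Tor$ — are routine uses of hypotheses (ii)--(iv).
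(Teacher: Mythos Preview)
Your proposal is correct and follows essentially the same route as the paper's proof: identify $J\cap yL=y(J\cap L)$, factor the map into $J$ through multiplication by $y$ on $J$, and use Koszulness of $yL$ together with $J\cap L\subseteq\mm L$ to kill the map into $yL$, then invoke Theorem~\ref{thm_Betti_splitting}. The only difference is packaging: the paper isolates your inductive argument that ``a submodule of $\mm K$ induces trivial maps on $\Tor$ into a Koszul module $K$'' as Lemma~\ref{lem_persistence}(b1) and simply cites it, whereas you reprove it inline via the injectivity of $F_i/\mm F_i\to\mm F_{i-1}/\mm^2F_{i-1}$---which is indeed the content of that lemma.
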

The following lemma is useful for the proof of Proposition \ref{prop_Koszul_decomp} as well as that of Theorem \ref{thm_weaklychordal}.
\begin{lem}[Nguyen, {\cite{Ng}}]
\label{lem_persistence}
Let $(R,\mm)$ be a noetherian local ring, and $M\xlongrightarrow{\phi} P$ be a morphism of finitely generated $R$-modules. Then the following statements hold: 
\begin{enumerate}
\item[\textup{(a)}] If for some $\ell \ge \lind_R M$, the map $\Tor^R_i(k,M) \longrightarrow \Tor^R_i(k,P)$ is injective at $i=\ell$, then that map is also injective for all $i\ge \ell$.
\item[\textup{(a1)}] In particular, if $M$ is a Koszul module and $\phi^{-1}(\mm P)=\mm M$ then the natural map $\Tor^R_i(k,M) \longrightarrow \Tor^R_i(k,P)$ is injective for all $i\ge 0$.
\item[\textup{(b)}] If for some $\ell \ge \lind_R P$, the map $\Tor^R_i(k,M) \longrightarrow \Tor^R_i(k,P)$ is trivial at $i=\ell$, then that map is also trivial for all $i\ge \ell$.
\item[\textup{(b1)}] In particular, if $P$ is a Koszul module and $\phi(M) \subseteq \mm P$ then the map $\Tor^R_i(k,M) \longrightarrow \Tor^R_i(k,P)$ is trivial for all $i\ge 0$.
\end{enumerate}
\begin{proof}
For (a) and (b), see \cite[Lemma 2.8]{Ng}. 

For (a1), note that if $\phi^{-1}(\mm P)=\mm M$ then the map $\Tor^R_i(k,M) \longrightarrow \Tor^R_i(k,P)$ is injective at $i=0$. Since $M$ is Koszul, the conclusion follows from (a). Similar arguments work for (b1).
\end{proof}

\end{lem}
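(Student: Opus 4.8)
The plan is to recast both assertions as statements about the linear part and then settle them with a degree-sensitive diagram chase. Fix minimal free resolutions $F$ of $M$ and $G$ of $P$, and choose a lifting $\vphi\colon F\to G$ of $\phi$. Since $\vphi$ is $R$-linear it satisfies $\vphi(\mm^s F_j)=\mm^s\vphi(F_j)\subseteq \mm^s G_j$, so $\vphi$ preserves the $\mm$-adic filtrations $\Fc^\bullet F$ and $\Fc^\bullet G$ and hence induces a chain map $\linp^R\vphi\colon \linp^R F\to \linp^R G$ of complexes of graded free $\gr_{\mm}R$-modules. By \eqref{eq_linp} the module $(\linp^R F)_i$ is free over $\gr_{\mm}R$ on $F_i/\mm F_i=\Tor^R_i(k,M)$, placed in internal degree $i$. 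The crux of the setup is that, because $\linp^R\vphi$ is the associated graded of the filtered morphism $\vphi$, its homological-degree-$i$ component is internal-degree-preserving and therefore a scalar matrix: it equals $\Tor^R_i(k,\phi)$ extended $\gr_{\mm}R$-linearly (entries of $\vphi_i$ lying in $\mm$ contribute zero). Consequently $\Tor^R_i(k,\phi)$ is injective (resp.\ zero) exactly when $(\linp^R\vphi)_i$ is injective (resp.\ zero), while $\lind_R M$ and $\lind_R P$ are the top homological degrees in which $\linp^R F$ and $\linp^R G$ carry homology. This reduces (a) and (b) to statements about a chain map between two \emph{linear} complexes of free modules.

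For (a) I would induct upward on $i$, the case $i=\ell$ being the hypothesis. Suppose $(\linp^R\vphi)_i$ is injective for some $i\ge \ell$, and let $x$ be a minimal generator of $\ker\,(\linp^R\vphi)_{i+1}$; since this kernel is a free $\gr_{\mm}R$-module generated in internal degree $i+1$, it suffices to show $x=0$. The element $x$ lives in the bottom piece of $(\linp^R F)_{i+1}$, of internal degree $i+1$. Commutativity of the square with $(\linp^R\vphi)_{i+1}(x)=0$ gives $(\linp^R\vphi)_i(\partial x)=0$, and injectivity of $(\linp^R\vphi)_i$ forces $\partial x=0$, so $x$ is a cycle of $\linp^R F$ in homological degree $i+1$. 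Now $i+1>\ell\ge \lind_R M$, so $H_{i+1}(\linp^R F)=0$ and $x$ is a boundary; but boundaries landing in internal degree $i+1$ come from $(\linp^R F)_{i+2}$, which is generated in internal degree $i+2>i+1$ and so contributes nothing there. Hence $x=0$, which proves injectivity at step $i+1$.

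Part (b) is the dual argument. Assuming $(\linp^R\vphi)_i=0$ for some $i\ge \ell\ge \lind_R P$, commutativity forces the image of $(\linp^R\vphi)_{i+1}$ into the cycles of $\linp^R G$ in homological degree $i+1$; the image of any minimal generator sits in internal degree $i+1$, and the same vanishing-of-boundaries-in-the-generating-degree argument, now using $H_{i+1}(\linp^R G)=0$, shows that image is $0$, whence $(\linp^R\vphi)_{i+1}=0$. The refinements (a1) and (b1) are then the Koszul cases $\ell=0$: here $\Tor^R_0(k,\phi)$ is the reduction $\phi\otimes_R k\colon M/\mm M\to P/\mm P$, the condition $\phi^{-1}(\mm P)=\mm M$ says precisely that this map is injective, and $\phi(M)\subseteq \mm P$ says precisely that it vanishes, so (a) and (b) apply verbatim with $\ell=0$.

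The routine ingredients---existence of the lift $\vphi$, its preservation of the $\mm$-adic filtration, and the translation of $\lind$ into homology of the linear part---are standard, so the substance of the proof lies entirely in the single inductive step, which is a one-line diagram chase in each case. The one point that demands genuine care, and the only place an error could hide, is the bookkeeping of the two gradings: everything hinges on the degree shift $(-i)$ built into $(\linp^R F)_i$ and on the fact that $(\linp^R\vphi)_i$ is the scalar matrix $\Tor^R_i(k,\phi)$ rather than something carrying higher-degree entries. Keeping homological degree and internal degree rigorously separate is what guarantees that a cycle concentrated in its generating internal degree cannot be a boundary unless it is zero, and this is the mechanism that propagates injectivity and triviality from step $\ell$ to all later steps.
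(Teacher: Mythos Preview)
Your proof is correct. The paper does not actually prove parts (a) and (b) but only cites \cite[Lemma 2.8]{Ng}; your argument via the linear part is a clean self-contained alternative that stays entirely within the machinery set up in Section~\ref{sect_background}. The key identification---that $(\linp^R\vphi)_i$ is the $\gr_{\mm}R$-linear extension of $\Tor^R_i(k,\phi)$, so that injectivity and vanishing transfer back and forth---is correct (indeed $\gr_{\mm}\vphi_i$ only sees the residues of the matrix entries of $\vphi_i$ modulo $\mm$), and the internal-degree argument showing that a cycle concentrated in the generating degree $i+1$ of $(\linp^R F)_{i+1}$, respectively $(\linp^R G)_{i+1}$, cannot be a nonzero boundary is exactly the right mechanism. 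For (a1) and (b1) your reduction to $\ell=0$ is identical to the paper's: the hypothesis $\phi^{-1}(\mm P)=\mm M$ is precisely injectivity of $\Tor^R_0(k,\phi)$, and $\phi(M)\subseteq \mm P$ is precisely its vanishing.
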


\begin{proof}[Proof of Proposition \ref{prop_Koszul_decomp}]
Consider the following short exact sequence, in which the equality holds because of the assumption (iii):
\[
\xymatrixcolsep{5mm}
\xymatrixrowsep{2mm}
\xymatrix{
0 \ar[rr]&&  J\cap yL=y(J\cap L)  \ar[rr] &&  J\oplus yL \ar[rr] && I \ar[rr]&& 0.
}
\]
Since $J\cap L\subseteq J$, the map $\Tor^R_i(k,y(J\cap L))\to \Tor^R_i(k,J)$ factors through $\Tor^R_i(k,yJ)\to \Tor^R_i(k,J)$. The last map is trivial since $y$ is $J$-regular. Hence $\Tor^R_i(k,y(J\cap L))\to \Tor^R_i(k,J)$ is also the trivial map.

Since $y(J\cap L)\cong J\cap L, yL\cong L$ (by the assumption (iv)), the map $\Tor^R_i(k,y(J\cap L))\to \Tor^R_i(k,yL)$ can be identified with the map $\Tor^R_i(k,J\cap L)\to \Tor^R_i(k,L)$. Since $L$ is Koszul and $J\cap L\subseteq \mm L$, the last map is trivial by Lemma \ref{lem_persistence}(b1). Therefore $\Tor^R_i(k,y(J\cap L))\to \Tor^R_i(k,yL)$ is also the trivial map. This means that $I=J+yL$ is a Betti splitting.

For the remaining inequalities, note that $J\cap yL=y(J\cap L)\cong J\cap L$, so Theorem \ref{thm_Betti_splitting} and the fact that $\lind_R L=0$ yield the desired conclusion.
\end{proof}
Let $R=k[x_1,\ldots,x_n]$ be a polynomial ring, $I$ a monomial ideal and $y$ one of the variables. The ideal $I$ can be written as $I=J+yL$, where $J$ is generated by monomials in $I$ which are not divisible by $y$ and $yL$ is the ideal generated by the remaining generators of $I$. If $y$ does not divide any minimal monomial generator of $I$, then $J=I$, $L=0$. Following \cite{FHV}, we say that the unique decomposition $I=J+yL$ is the {\it $y$-partition} of $I$.

Among other things, the following result generalizes the second statement of \cite[Corollary 2.7]{FHV}. It will be employed in Section \ref{sect_cycles}.
\begin{cor}
\label{cor_y-splitting}
Let $R=k[x_1,\ldots,x_n]$ be a polynomial ring over $k$, where $n\ge 1$. Let $y$ be one of the variables. Let $I$ be a monomial ideal and $I=J+yL$ its $y$-partition. Assume that $L$ is Koszul. Then $I=J+yL$ is a Betti splitting and there are inequalities
\begin{align*}
\lind_R I &\le \max\{\lind_R J, \lind_R (J\cap L)+1\},\\
\lind_R J &\le \lind_R I,\\
\lind_R (J\cap L) &\le \max\{\lind_R J, \lind_R I-1\}.
\end{align*}
In particular, either $\lind_R I=\lind_R J\ge \lind_R (J\cap L)$, or $\lind_R I=\lind_R (J\cap L)+1 \ge \lind_R J+1$.
\end{cor}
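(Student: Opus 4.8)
The plan is to produce $I=J+yL$ as an instance of the Betti splitting furnished by Proposition~\ref{prop_Koszul_decomp}, and then to sharpen the middle inequality it provides by means of an algebra-retract argument. We may assume $L\neq 0$, for otherwise $I=J$ and all the assertions are trivial.

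First I would verify hypotheses (i)--(iv) of Proposition~\ref{prop_Koszul_decomp} for the decomposition $I=J+yL$. Condition (i) is the hypothesis that $L$ is Koszul, and (iv) holds because $R$ is a domain. For (iii), note that $J$ is a monomial ideal none of whose minimal generators is divisible by $y$; hence, writing $R'=k[x_i:x_i\neq y]$ and $J'=J\cap R'$, we have $J=J'R$ and $R/J\cong(R'/J')[y]$, so $y$ is a nonzerodivisor modulo $J$. For (ii), recall that the minimal monomial generators of $L$ are exactly the monomials $m$ with $ym\in\Gc(I)$, whereas $\Gc(J)=\{g\in\Gc(I):y\nmid g\}\subseteq\Gc(I)$. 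If a minimal generator $m$ of $L$ lay in $J$, some $g\in\Gc(J)\subseteq\Gc(I)$ would divide $m$, hence $ym$; since $y\nmid g$ we would have $g\neq ym$, contradicting the minimality of $\Gc(I)$. As $J\cap L$ is a monomial ideal, this yields $J\cap L\subseteq\mm L$. Proposition~\ref{prop_Koszul_decomp} then shows that $I=J+yL$ is a Betti splitting, it gives the first and third of the stated inequalities, and it gives the weaker bound $\lind_R J\le\max\{\lind_R(J\cap L),\lind_R I\}$.

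The hard part is improving this to $\lind_R J\le\lind_R I$; this does not follow formally from the Betti-splitting inequalities and is where a genuinely new input is needed. Here I would invoke Lemma~\ref{lem_retract} for the algebra retract $\theta\colon R'\hookrightarrow R$ with retraction map $\vphi\colon R\to R'$ sending $y$ to $0$ and fixing the remaining variables, so that $\vphi\circ\theta=\id_{R'}$. Taking the ideal $J'\subseteq R'$ and the ideal $I\subseteq R$, one has $\theta(J')R=J\subseteq I$ and $\vphi(I)R'=J'$, because $\vphi$ annihilates the generators of $I$ divisible by $y$ and fixes the remaining ones, which generate $J'$. Lemma~\ref{lem_retract} then gives $\lind_{R'}J'\le\lind_R I$, while Corollary~\ref{cor_omit_subscripts} applied to the flat extension $R'\to R$ gives $\lind_{R'}J'=\lind_R(J'R)=\lind_R J$; hence $\lind_R J\le\lind_R I$.

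Finally, the ``in particular'' clause is pure bookkeeping. Write $a=\lind_R I$, $b=\lind_R J$, $c=\lind_R(J\cap L)$; we have $b\le a$, $a\le\max\{b,c+1\}$, and $c\le\max\{b,a-1\}$. If $a=b$ then $c\le\max\{b,a-1\}=a$, which is the first alternative $\lind_R I=\lind_R J\ge\lind_R(J\cap L)$. If $a>b$ then $b\le a-1$, so $c\le\max\{b,a-1\}=a-1$, while $a\le\max\{b,c+1\}$ forces $c+1\ge a$; thus $c=a-1$ and $b\le a-1=c$, which is the second alternative $\lind_R I=\lind_R(J\cap L)+1\ge\lind_R J+1$. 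The only real obstacle is the sharpening in the third paragraph; everything else rests directly on Proposition~\ref{prop_Koszul_decomp}.
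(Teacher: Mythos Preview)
Your proof is correct and follows exactly the paper's approach: verify the hypotheses of Proposition~\ref{prop_Koszul_decomp} to obtain the Betti splitting together with the first and third inequalities, then invoke Lemma~\ref{lem_retract} via the retract $R'\hookrightarrow R$ (with $y\mapsto 0$) for the middle inequality, and finish by combining the three bounds. The paper states all of this in one sentence each; you have simply supplied the routine verifications the paper omits.
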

\begin{proof}
It is straightforward to check that the conditions of Proposition \ref{prop_Koszul_decomp} are satisfied. Hence the first and third inequalities follow. The second one is a consequence of Lemma \ref{lem_retract}. The last statement is an immediate consequence of the three inequalities.
\end{proof}

\section{Weakly chordal graphs and co-two-pairs}
\label{sect_weaklychordal}
The following notion will be important to inductive arguments with edge ideals of weakly chordal graphs.
\begin{defn}
Two vertices $x, y$ of a graph $G$ form a {\it two-pair} if they are not adjacent and every induced path connecting them is of length $2$.  If $x$ and $y$ form a two-pair of $G^c$ then we say that they are a {\it co-two-pair}.
\end{defn}
Clearly if two vertices form a co-two-pair then they are adjacent. By abuse of terminology, we also say that $xy$ is a two-pair (or co-two-pair) of $G$ if the pair $x,y$ is so. 

Recall that a subset of vertices of a graph $G$ is called a {\it clique} if every two vertices in that subset are adjacent. The existence of two-pairs is guaranteed by the following result.
\begin{lem}[{Hayward, Ho\`ang, Maffray \cite[The WT Two-Pair Theorem, Page 340]{HHM}}] 
\label{lem_twopair_existence}
If $G$ is a weakly chordal graph which is not a clique, then $G$ contains a two-pair.
\end{lem}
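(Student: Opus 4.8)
The statement to prove is Lemma~\ref{lem_twopair_existence}, the WT Two-Pair Theorem of Hayward, Ho\`ang and Maffray: every weakly chordal graph that is not a clique contains a two-pair. Since this is cited from \cite{HHM}, the paper will presumably just cite it; but here is how I would reconstruct a proof.

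\medskip

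\noindent\textbf{Plan of the proof.} The plan is to argue by induction on the number of vertices of $G$, exploiting the self-complementary nature of the weakly chordal condition (recall $G$ is weakly chordal iff $G^c$ is). First I would dispose of trivial cases: if $G$ is disconnected with at least two nontrivial components, or has an isolated vertex together with some edge, any two vertices in different components form a two-pair (no induced path at all, hence vacuously every induced path has length $2$); if $G$ is edgeless on $\ge 2$ vertices, any two vertices form a two-pair. So I may assume $G$ is connected. Dually, if $G^c$ is disconnected, then $G$ is a join $G_1 * G_2$ of smaller weakly chordal graphs; since $G$ is not a clique, at least one $G_i$, say $G_1$, is not a clique, so by induction $G_1$ has a two-pair $\{x,y\}$, and I would check that $\{x,y\}$ remains a two-pair in $G$ — any induced $x$--$y$ path in $G$ that used a vertex $v\in V(G_2)$ would, because $v$ is adjacent to everything in $G_1$, be forced to have length $2$ through $v$, and any path staying inside $G_1$ has length $2$ by the inductive hypothesis. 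Hence I may assume both $G$ and $G^c$ are connected.

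\medskip

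\noindent\textbf{Main argument.} With $G$ and $G^c$ both connected, the core of the proof uses the structure of weakly chordal graphs via \emph{two-pair–creating} vertices, or equivalently via a ``handle'' decomposition. The cleanest route I know is the one through the notion of a vertex $v$ whose neighborhood $N(v)$ is such that $G\setminus v$ is still weakly chordal (deletion always preserves weak chordality, since induced subgraphs of $G$ and of $G^c$ are induced subgraphs), combined with an extremality argument. Specifically: consider a pair of non-adjacent vertices $a,b$ at distance $2$ in $G$ chosen so that the set $S$ of their common neighbors is \emph{inclusion-minimal} among all such pairs, and among those so that some further tie-breaker (say, the number of vertices reachable from $a$ avoiding $N[b]$) is minimized. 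Then I would show $\{a,b\}$ must be a two-pair: if not, there is an induced path $P: a = p_0, p_1, \dots, p_k = b$ with $k \ge 3$. Using that $G$ has no induced cycle of length $\ge 5$ and $G^c$ has no induced cycle of length $\ge 5$, one analyzes how the interior of $P$ interacts with $S$ and with the vertices on a shortest $a$--$b$ path through $S$; a careful case analysis produces either a shorter ``witness'' path contradicting extremality, or a forbidden induced $C_m$ ($m\ge 5$) in $G$ or $G^c$.

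\medskip

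\noindent\textbf{Expected main obstacle.} The hard part will be exactly this case analysis showing the chosen extremal non-adjacent pair is a two-pair. The difficulty is that ``weakly chordal'' forbids long induced cycles in \emph{both} $G$ and $G^c$, so one must simultaneously track induced paths in $G$ and independent-set/anti-path structure in $G^c$; the interplay of chords among the interior vertices $p_1,\dots,p_{k-1}$ of a would-be long induced path with a shortest detour through a common neighbor is where all the work lies, and it is where the original \cite{HHM} argument is genuinely nontrivial. In a self-contained write-up I would isolate as a sublemma the statement that in a weakly chordal graph, if $a,b$ are non-adjacent and $P$ is a shortest induced $a$--$b$ path of length $\ge 3$, then $P$ together with any common neighbor of $a$ and $b$ produces a forbidden configuration unless the common-neighbor set can be shrunk — and then feed that into the extremality choice above. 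Given the scope of the present paper, however, the reasonable course is simply to invoke \cite[The WT Two-Pair Theorem, Page 340]{HHM} as stated, which is what I would do.
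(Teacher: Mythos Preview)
Your anticipation is correct: the paper gives no proof of this lemma at all, simply citing \cite[The WT Two-Pair Theorem, Page 340]{HHM} and using it as a black box in the proof of Theorem~\ref{thm_weaklychordal} and Theorem~\ref{thm_projdim_weaklychordal}. So there is nothing to compare your argument against.

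As for the sketch you offer: the reductions to the case where both $G$ and $G^c$ are connected are clean and correct. The ``main argument'' you outline, however, is really only a plan --- you choose an extremal non-adjacent pair $\{a,b\}$ and assert that a careful case analysis on a putative long induced $a$--$b$ path, using the absence of induced $C_m$ ($m\ge 5$) in $G$ and in $G^c$, will yield a contradiction, but you do not actually carry out that analysis. That case analysis is precisely the content of the Hayward--Ho\`ang--Maffray proof, and it is not short; your extremality criterion (inclusion-minimal common-neighbor set, plus an ad hoc tie-breaker) is not quite the one they use, and you would need to verify that it actually drives the argument through. In other words, what you have written is a plausible roadmap rather than a proof, and you correctly identify the obstacle. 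Since the paper is content to cite the result, your final recommendation --- just invoke \cite{HHM} --- matches exactly what the authors do.
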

A useful property of co-two-pairs is the following
\begin{lem}
\label{lem_copair_remove}
If $xy$ is a co-two-pair of a graph $G$, then any induced matching of $G\setminus xy$ is also an induced matching of $G$. In particular, $\inmat(G\setminus xy) \le \inmat(G)$.
\end{lem}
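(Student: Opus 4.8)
The plan is a short case analysis on how the endpoints of the removed edge meet the vertex set spanned by the matching. Fix an induced matching $M=\{e_1,\dots,e_m\}$ of $G\setminus xy$ and put $W=e_1\cup\dots\cup e_m\subseteq V(G)$. Since $G$ and $G\setminus xy$ have the same vertex set and differ only by the single edge $xy$, the induced subgraph of $G$ on $W$ agrees with the induced subgraph of $G\setminus xy$ on $W$ unless $\{x,y\}\subseteq W$. So if $x\notin W$ or $y\notin W$ we are immediately done, and the whole point is to rule out the possibility $\{x,y\}\subseteq W$.

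Suppose $\{x,y\}\subseteq W$. If $x$ and $y$ lay in a common edge $e_i$ of $M$, then $e_i=\{x,y\}$ would be an edge of $G\setminus xy$, which is absurd. Hence there are distinct indices $i\neq j$ with $x\in e_i$, $y\in e_j$; write $e_i=\{x,x'\}$ and $e_j=\{y,y'\}$. Since $M$ is a matching its edges are disjoint, so $x,x',y,y'$ are four distinct vertices. Because $M$ is an \emph{induced} matching of $G\setminus xy$, the induced subgraph of $G\setminus xy$ on $\{x,x',y,y'\}$ is $2K_2$ with edges $xx'$ and $yy'$ only; consequently in $G$ the only edges among these four vertices are $xx'$, $yy'$, and $xy$ (recall that the endpoints of a co-two-pair are adjacent in $G$).

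Now translate these adjacencies into $G^c$: there $x$ and $y$ are non-adjacent and, being a two-pair of $G^c$, every induced path of $G^c$ joining them has length $2$. But from the previous paragraph, $G^c$ contains the edges $xy'$, $y'x'$, $x'y$, while $xx'$, $xy$, $y'y$ are non-edges of $G^c$; hence $x,y',x',y$ is an induced path of length $3$ in $G^c$ connecting $x$ and $y$, a contradiction. Therefore $\{x,y\}\subseteq W$ cannot occur, so $M$ is an induced matching of $G$; applying this to a maximum induced matching of $G\setminus xy$ yields $\inmat(G\setminus xy)\le\inmat(G)$.

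There is essentially no obstacle here: the only thing to notice is that a $2K_2$ on $\{x,x',y,y'\}$ in $G\setminus xy$, together with the single extra edge $xy$ present in $G$, is exactly the configuration needed to exhibit the forbidden chordless $P_4$ in $G^c$, so the definition of (co-)two-pair applies directly.
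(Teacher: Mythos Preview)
Your proof is correct and follows essentially the same approach as the paper: both arguments reduce to finding two matching edges $xx'$ and $yy'$ (the paper writes $xu$ and $yv$) and observing that the resulting $2K_2$ in $G\setminus xy$ yields an induced path $x\,y'\,x'\,y$ of length $3$ in $G^c$, contradicting the two-pair condition. You simply spell out the case analysis and the verification that the path is induced more explicitly than the paper does.
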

\begin{proof}
It suffices to prove the first part. Assume the contrary, there exists an induced matching of $G\setminus xy$ which is not an induced matching of $G$. Then necessarily, there are two edges in this matching of the form $xu$ and $yv$. But then in $G^c$, we have an induced path of length two $xvuy$. This contradicts the assumption that $xy$ is a co-two-pair. The proof is concluded.
\end{proof}

Another simple but useful property of co-two-pairs is the following
\begin{lem}
\label{lem_weaklychordal} 
Let $G$ be a  weakly chordal graph, and $xy$ be a co-two-pair of $G$. Then $G \setminus xy$ is also weakly chordal.
\end{lem}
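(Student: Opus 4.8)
The plan is to argue throughout with complements and induced subgraphs. Recall that $xy$ being a co-two-pair of $G$ means precisely that $x$ and $y$ are adjacent in $G$ (so $xy\notin E(G^c)$) and that every induced path of $G^c$ joining $x$ and $y$ has length exactly $2$; recall also that $G^c$ is weakly chordal (Example~\ref{ex_chordal}). Put $H=G\setminus xy$. Then $H^c$ is $G^c$ together with the extra edge $xy$, so to prove $H$ is weakly chordal it suffices to show: (a) $H$ has no induced cycle of length $\ge 5$, and (b) $H^c$ has no induced cycle of length $\ge 5$. Since $G$ and $H$ differ only in the pair $\{x,y\}$, and likewise $G^c$ and $H^c$ differ only there, an induced cycle of $H$ (resp.\ $H^c$) of length $\ge 5$ cannot be an induced cycle of $G$ (resp.\ $G^c$); this is where the hypotheses will bite.

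Part (b) is the quick one. Let $C$ be an induced cycle of $H^c$ of length $m\ge 5$. If $C$ did not use the edge $xy$, then all its edges would lie in $E(G^c)$ and it would have no chord in $G^c\subseteq H^c$, so $C$ would be an induced cycle of the weakly chordal graph $G^c$ -- impossible. Hence $xy\in E(C)$. Deleting this edge, $C\setminus xy$ is a path joining $x$ and $y$ of length $m-1\ge 4$; moreover, since $C$ is induced in $H^c$ and $xy\notin E(G^c)$, this path is exactly the induced subgraph $G^c[V(C)]$, hence an induced path of $G^c$. A path of length $\ge 4$ contradicts the two-pair property of $xy$ in $G^c$.

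For part (a), let $C$ be an induced cycle of $H$ of length $m\ge 5$. As noted above $C$ is not induced in $G$, so $xy$ is a chord of $C$ in $G$; in particular $x,y\in V(C)$ and they are non-adjacent along $C$. The chord $xy$ cuts $C$ into two arcs, and because $C$ is induced in $H=G\setminus xy$, each arc together with the edge $xy$ spans an induced cycle of $G$, of length one more than the length of the arc. Weak chordality of $G$ then forbids an arc of length $\ge 4$, so both arcs have length $2$ or $3$; as their lengths sum to $m\ge 5$, either $m=5$ with arcs of lengths $2$ and $3$, or $m=6$ with arcs of lengths $3$ and $3$. In both remaining cases one computes $G^c[V(C)]$, which is the complement on $m$ vertices of the cycle $C$ with the chord $xy$ added: for $m=5$ it is a single path of length $4$ from $x$ to $y$, and for $m=6$ one checks directly that it contains an induced path of length $3$ from $x$ to $y$. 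Either outcome again contradicts the two-pair property, finishing the proof.

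I expect the only real work to be in part (a): one must keep careful track of which subgraphs on $V(C)$ are induced -- in $G$, in $G^c$, in $H$, in $H^c$ -- so that "chord" and "non-chord" mean the same thing in the ambient graph as in the restriction, and then the two small complement computations for $m=5$ and $m=6$ must be written out by hand. Everything else is a routine complement/induced-subgraph argument.
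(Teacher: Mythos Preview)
Your argument is correct and self-contained. The paper, by contrast, disposes of the lemma in two lines by quoting the Edge addition lemma of Spinrad and Sritharan \cite[Page 185]{SS}: if $u,v$ is a two-pair of a weakly chordal graph $G'$, then $G'$ together with the new edge $uv$ is again weakly chordal. Applying this to $G'=G^c$ (which is weakly chordal since $G$ is) and the two-pair $x,y$ gives immediately that $(G\setminus xy)^c=G^c+xy$ is weakly chordal, hence so is $G\setminus xy$.

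What you have written is, in effect, a direct proof of that Edge addition lemma in the special setting needed here. Your part~(b) is exactly the ``no long induced cycle through the new edge'' half, and your part~(a) handles the complement side by cutting the hypothetical long cycle with the chord $xy$, reducing to the two residual cases $m=5$ and $m=6$, and exhibiting in each a forbidden long induced $x$--$y$ path in $G^c$. The benefit of your route is that it keeps the paper self-contained and makes transparent exactly how the two-pair hypothesis is used; the benefit of the paper's route is brevity and a clean pointer to a known structural fact about weakly chordal graphs. If you keep your version, it would streamline the write-up to record explicitly the two small complement computations (for $m=5$, the complement of $C_5$ with one chord is the path $x\,v_4\,v_2\,v_5\,y$; for $m=6$ with opposite chord, $x\,v_3\,v_6\,y$ is an induced $P_4$), since that is the only place a reader might pause.
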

\begin{proof}
From the definition of weak chordality, that $G$ is weakly chordal is equivalent to $G^c$ being weakly chordal. Note that $(G\setminus xy)^c$ is the addition of the edge $xy$ to $G^c$. By {\cite[Edge addition lemma, Page 185]{SS}}, the graph $(G\setminus xy)^c$ is weakly chordal. Therefore $G\setminus xy$ is weakly chordal itself.
\end{proof}
Recall that a graph $G$ is said to be weakly chordal if both $G$ and $G^c$ have no induced cycle of length $5$ or larger. We can now prove Theorem \ref{thm_main2} from the introduction.
\begin{thm}
\label{thm_weaklychordal}
Let $G$ be a weakly chordal graph with at least one edge. Then:
\begin{enumerate}[\quad \rm(i)]
\item For any co-two-pair $e$ in $G$, the decomposition $I(G)=(e)+I(G\setminus e)$ is a Betti splitting.
\item There is an equality $\lind I(G)=\inmat(G)-1.$
\end{enumerate}
\end{thm}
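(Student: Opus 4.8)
The strategy is to prove (i) and (ii) together by a single induction on the number of edges of $G$. The base case, when $G$ has exactly one edge, is trivial: then $I(G)$ is principal, $\lind I(G)=0$, and $\inmat(G)=1$. For the inductive step, if $G$ is a clique then $I(G)$ is a power of the maximal ideal restricted to the clique vertices, which has a linear resolution, so $\lind I(G)=0=\inmat(G)-1$ (a clique has induced matching number $1$). Otherwise $G^c$ is not a clique, so by Lemma \ref{lem_twopair_existence} applied to $G^c$ (which is weakly chordal, since $G$ is) there is a two-pair of $G^c$, i.e.\ a co-two-pair edge $e=xy$ of $G$. By Lemma \ref{lem_weaklychordal}, $G\setminus e$ is weakly chordal, and it has one fewer edge, so the induction hypothesis applies to it: $\lind I(G\setminus e)=\inmat(G\setminus e)-1$, and every co-two-pair in $G\setminus e$ splits its edge ideal.

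\textbf{Step 1: prove (i), that $I(G)=(e)+I(G\setminus e)$ is a Betti splitting.} This is the analytic heart of the argument. Using Lemma \ref{lem_equivalence}, it suffices to show the natural maps $\Tor^R_i(k,(e)\cap I(G\setminus e))\to \Tor^R_i(k,(e))$ and $\to \Tor^R_i(k,I(G\setminus e))$ are zero for all $i$. The first is automatic: $(e)$ is principal (hence Koszul), and $(e)\cap I(G\setminus e)\subseteq \mm (e)$ since $e$ itself is not a generator of $I(G\setminus e)$, so Lemma \ref{lem_persistence}(b1) gives the vanishing. For the second map one must understand the ideal $(e)\cap I(G\setminus e)$. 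Writing $e=xy$, this intersection is $\lcm$-generated: its generators are $xy\cdot m$ where $xy\cdot m$ is divisible by some edge $e'\neq e$ of $G$, i.e.\ the monomials $\operatorname{lcm}(xy,e')$ for $e'\in E(G\setminus e)$. The co-two-pair hypothesis on $xy$ is exactly what controls this: if $e'$ shares a vertex with $e$, say $e'=xu$, then $\operatorname{lcm}(xy,xu)=xyu$, and the co-two-pair condition (no induced path $x\,v\,u\,y$ in $G^c$) forces any $v$ adjacent to both $u$ and $y$ in $G$ to also be adjacent to $x$, which one translates into a statement that the relevant piece of the resolution of $(e)\cap I(G\setminus e)$ maps into $\mm$ times that of $I(G\setminus e)$. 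Here I expect to invoke the $\lcm$-lattice machinery of \cite{GPW}, \cite{PV}: the natural map of $\lcm$-lattices $\operatorname{LCM}((e)\cap I(G\setminus e))\to \operatorname{LCM}(I(G\setminus e))$ induced by $m\mapsto m/(\text{part contributed by }e)$ should be shown to be order-preserving and to collapse no new joins, which yields the triviality of the $\Tor$ map. \emph{This step — verifying that the co-two-pair condition makes the $\Tor$-map from the intersection into $I(G\setminus e)$ vanish — is the main obstacle}, and it will require a careful case analysis (edges of $G\setminus e$ meeting $e$ in $0$, $1$, or $2$ vertices) together with the $\lcm$-lattice comparison.

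\textbf{Step 2: deduce (ii) from (i) and the induction hypothesis.} Once (i) is established, Theorem \ref{thm_Betti_splitting} applied to the Betti splitting $I(G)=(e)+I(G\setminus e)$ with $J=(e)$ (Koszul, $\lind=0$), $K=I(G\setminus e)$, and $J\cap K=(e)\cap I(G\setminus e)$ gives
\[
\lind I(G)\le \max\{0,\ \lind I(G\setminus e),\ \lind((e)\cap I(G\setminus e))+1\}.
\]
Combined with Lemma \ref{lem_retract}-type reasoning, $(e)\cap I(G\setminus e)\cong$ (up to the monomial factor $xy$) an ideal whose linearity defect is controlled; in fact one shows $\lind((e)\cap I(G\setminus e))\le \lind I(G\setminus e)$, possibly after another application of the induction hypothesis to a smaller weakly chordal graph, so the upper bound becomes $\lind I(G)\le \lind I(G\setminus e)+1=\inmat(G\setminus e)$. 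Since by Lemma \ref{lem_copair_remove} we have $\inmat(G\setminus e)\le \inmat(G)$, and since the reverse inequality $\lind I(G)\ge \inmat(G)-1$ is Corollary \ref{cor_indmatch}, we are squeezed into $\lind I(G)=\inmat(G)-1$ provided $\inmat(G\setminus e)\le \inmat(G)$ is in fact an equality for a \emph{suitably chosen} co-two-pair $e$. The final subtlety is therefore the choice of $e$: one must argue that there exists a co-two-pair $e$ of $G$ with $\inmat(G\setminus e)=\inmat(G)$ (equivalently, that removing some co-two-pair edge does not drop the induced matching number), or alternatively handle the two cases of Corollary \ref{cor_y-splitting}-style dichotomy directly — either $\lind I(G)=\lind I(G\setminus e)$ and then $\inmat$ is unchanged, or $\lind I(G)=\lind((e)\cap I(G\setminus e))+1$ and one tracks the matching number through the intersection ideal. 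Assembling these cases completes the induction.
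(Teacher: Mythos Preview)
Your overall scaffold matches the paper's, but there are two substantive gaps that prevent the argument from closing.

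\textbf{Gap in Step 2 (the bound is off by one).} From the Betti splitting you get $\lind I(G)\le \max\{\lind I(G\setminus e),\ \lind((e)\cap I(G\setminus e))+1\}$, and you propose $\lind((e)\cap I(G\setminus e))\le \lind I(G\setminus e)$; but this only yields $\lind I(G)\le \lind I(G\setminus e)+1=\inmat(G\setminus e)\le \inmat(G)$, which does \emph{not} squeeze against the lower bound $\inmat(G)-1$. (Asking for $\inmat(G\setminus e)=\inmat(G)$ makes the gap worse, not better.) The missing ingredient is the explicit colon computation: writing $e=xy$,
\[
(e)\cap I(G\setminus e)=xy\bigl(I(G\setminus e):xy\bigr)=xy\bigl(L+I(H)\bigr),
\]
where $L$ is generated by the variables in $N(x)\cup N(y)\setminus\{x,y\}$ and $H$ is the induced subgraph of $G$ on $V(G)\setminus(N(x)\cup N(y))$. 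Since $xy$ can be adjoined to any induced matching of $H$, one has $\inmat(H)\le \inmat(G)-1$; as $H$ is weakly chordal, the induction hypothesis gives $\lind((e)\cap I(G\setminus e))=\lind I(H)\le \inmat(G)-2$, and now the max is $\le \inmat(G)-1$ as required.

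\textbf{Gap in Step 1 (the Tor-vanishing is not a direct lcm-lattice comparison).} The co-two-pair hypothesis alone does not force $\Tor_i(k,(e)\cap W)\to\Tor_i(k,W)$ to vanish for all $i$ via an order-preserving map of lcm-lattices or a case analysis on how edges meet $e$; that sketch has no mechanism for the higher syzygies. The paper's argument is more intricate and requires a \emph{stronger induction}: one proves simultaneously (S1) the Tor-vanishing, (S2) the linearity-defect formula, and (S3) $\reg I(G)=\inmat(G)+1$, inducting on both $|E(G)|$ and $g:=\inmat(G)$. For (S1), since $\lind W\le g-1$ by induction, Lemma~\ref{lem_persistence}(b) reduces the problem to $i\le g-1$; then (S3) for $G\setminus e$ bounds the relevant (squarefree) multidegrees to those supported on at most $2g$ vertices. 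Restricting via \cite{GPW}, \cite{PV} to the induced subgraph $G^\dagger$ on such a support, either $\inmat(G^\dagger)\le g-1$ and the induction on $\inmat$ finishes, or $G^\dagger$ has exactly $2g$ vertices with $\inmat=g$, which forces $G^\dagger=gK_2$, where the vanishing is checked by hand. Without the simultaneous regularity bound and the second induction parameter your Step~1 does not terminate.
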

We remark that the argument below yields a new proof to the implication (ii) $\Longrightarrow$ (i) of Theorem \ref{thm_Froeberg} (Fr\"oberg's theorem). The reverse implication follows easily from Corollary \ref{cor_inducedsubgr} and Lemma \ref{lem_anticycle}.
\begin{proof}[Proof of Theorem \ref{thm_weaklychordal}]
For simplicity, whenever possible we will omit the obvious superscripts of Tor modules. Thanks to Corollary \ref{cor_omit_subscripts}, we will systematically omit subscripts in writing regularity and linearity defect of ideals. Since $(e)$ has $2$-linear resolution and $(e)\cap I(G\setminus e)$ is generated in degree at least $3$, the map 
$$
\Tor_i(k, (e)\cap I(G\setminus e)) \longrightarrow \Tor_i(k,(e))
$$ 
is trivial for all $i\ge 0$. Hence to prove that $I(G)=(e)+I(G\setminus e)$ is a Betti splitting, it suffices to prove that the map $\Tor_i(k, (e)\cap I(G\setminus e)) \longrightarrow \Tor_i(k,I(G\setminus e))$ is trivial for all $i\ge 0$.

We prove by induction on $|E(G)|$ and $\inmat(G)$ the following stronger result.

\medskip
\noindent
\textsf{Claim:} Let $G$ be a weakly chordal graph with at least one edge. Then the following statements hold:
\begin{enumerate}
\item[(S1)] for any co-two-pair $e$ in $G$, the natural map 
$$
\Tor_i(k,(e)\cap I(G\setminus e)) \longrightarrow \Tor_i(k,I(G\setminus e))
$$ 
is trivial for all $i\ge 0$,
\item[(S2)] $\lind I(G)=\inmat(G)-1$, and,
\item[(S3)] $\reg I(G)=\inmat(G)+1$.
\end{enumerate}
If $G$ has only one edge then the statements are clear. Assume that $G$ has at least $2$ edges and $\inmat(G)=1$. Let $e=xy$ and consider the exact sequence
\[
\xymatrixcolsep{5mm}
\xymatrixrowsep{2mm}
\xymatrix{
0 \ar[rr]&&  (xy)(I(G\setminus e): xy)   \ar[rr] && I(G\setminus e) \oplus (xy) \ar[rr]&& I(G) \ar[rr]&& 0,
} 
\]
Let $y_1,\ldots,y_p$ be elements of the set $N(x)\cup N(y) \setminus \{x,y\}$. Since $\inmat(G)=1$, any edge of $G\setminus e$ contains at least a neighbor of either $x$ or $y$. Therefore $I(G\setminus e):xy=(y_1,\ldots,y_p)$. Clearly $p\ge 1$, so the first term in the above exact sequence has regularity 
\[
\reg (y_1,\ldots,y_p)+2=3.
\]
By Lemmas \ref{lem_copair_remove} and \ref{lem_weaklychordal}, $G\setminus e$ is weakly chordal of induced matching number $1$. Hence by the induction hypothesis, $I(G\setminus e)$ has $2$-linear resolution. From the last exact sequence, we deduce that $\reg I(G)\le 2$, proving (S2) and (S3). Since $(e) \cap I(G\setminus e)$ is generated in degree at least $3$ and $\reg I(G\setminus e)=2$, statement (S1) is true by inspecting degrees. Therefore the case $|E(G)|\ge 2$ and $\inmat(G)=1$ was established.

Now assume that $|E(G)|\ge 2$ and $\inmat(G)\ge 2$. We divide the remaining arguments into several steps. 

\medskip
\noindent
\textsf{Step 1:} We derive statements (S2) and (S3) by assuming that the statement (S1) is true. 

Since $G^c$ is not a clique, by Lemma \ref{lem_twopair_existence}, there exists a co-two-pair $xy$ in $G$. Denote $e=xy, W=I(G\setminus e)$. Consider the short exact sequence
\begin{equation}
\label{eq_ses1}
\xymatrixcolsep{5mm}
\xymatrixrowsep{2mm}
\xymatrix{
0 \ar[rr]&&  (e) \cap W  \ar[rr] && W \oplus (e) \ar[rr]&& I(G) \ar[rr]&& 0.
} 
\end{equation}
Denote by $L$ the ideal generated by the variables in $N(x)\cup N(y) \setminus \{x,y\}$. Denote by $H$ the induced subgraph of $G$ on the vertex set $G\setminus (N(x)\cup N(y))$. Then
\[
(e)\cap W =e(W:e) =e(L+I(H)) \cong \left(L+I(H)\right)(-2),
\]
We have $\inmat(H) \le \inmat(G)-1$, since we can add $xy$ to any induced matching of $H$ to obtain a larger induced matching in $G$. Since $H$ is an induced subgraph of $G$, it is weakly chordal.
Using \cite[Lemma 4.10(ii)]{NgV} and the induction hypothesis for $H$, we obtain the first and second equality, respectively, in the following display
\[
\lind ((e)\cap W)=\lind I(H) =\inmat(H)-1 \le \inmat(G)-2.
\]
Similarly, there is a chain
\[
\reg ((e)\cap W)=\reg (L+I(H))+2=\reg I(H)+2=\inmat(H)+3 \le \inmat(G)+2.
\]
The second equality in the chain holds since $I(H)$ and $L$ live in different polynomial subrings. By Lemma \ref{lem_weaklychordal}, $G\setminus e$ is again weakly chordal. Now there is a chain 
\begin{equation}
\label{eq_lind_deletion}
\lind W=\lind I(G\setminus e) =\inmat(G\setminus e) -1 \le \inmat(G)-1,
\end{equation}
in which the second equality follows from the induction hypothesis, the last inequality from Lemma \ref{lem_copair_remove}. Similarly, the induction hypothesis gives
\[
\reg W=\inmat(G\setminus e) +1 \le \inmat(G)+1.
\]
Since (S1) was assumed to be true, the decomposition $I(G)=(e)+I(G\setminus e)$ is a Betti splitting. So using Theorem \ref{thm_Betti_splitting}, we get
\[
\lind I(G) \le \max\{\lind ((e) \cap W)+1, \lind W, \lind (e)\} \le \inmat(G)-1.
\]
Together with Corollary \ref{cor_indmatch}, we get (S2). 

From the sequence \eqref{eq_ses1}, we also see that
\[
\reg I(G) \le \max\{\reg ((e) \cap W)-1, \reg W, \reg (e)\} \le \inmat(G)+1.
\]
The reverse inequality is true by Corollary \ref{cor_indmatch}, thus we obtain (S3).

\medskip
\noindent
\textsf{Step 2:} Set $g=\inmat(G)$, then $g\ge 2$ by our working assumption. In order to prove (S1), it suffices to prove the following weaker statement: the map
\[
\Tor_i(k,(e)\cap W) \longrightarrow \Tor_i(k,W)
\]
is trivial for all $i \le g-1$. 

Indeed, since $e$ is a co-two-pair, by Lemma \ref{lem_weaklychordal}, we get that $G\setminus e$ is weakly chordal. Furthermore, similarly to the chain \eqref{eq_lind_deletion} in Step 1, we have $\lind W =\inmat(G\setminus e) -1 \le g-1$. Combining the last inequality with the statement, we deduce that
\[
\Tor_i(k,(e)\cap W) \longrightarrow \Tor_i(k,W)
\]
is trivial for all $i \le \lind W$. An application of Lemma \ref{lem_persistence}(b) for the map $(e)\cap W \longrightarrow W$ implies that 
\[
\Tor_i(k,(e)\cap W) \longrightarrow \Tor_i(k,W)
\]
is also trivial for all $i\ge \lind W$.

\medskip
\noindent
\textsf{Step 3:} It remains to prove the statement that the map
\[
\phi: \Tor_i(k,(e)\cap W) \longrightarrow \Tor_i(k,W)
\]
is trivial for $i\le g-1$. Since $G\setminus e$ is weakly chordal and $\inmat(G\setminus e) \le g$, we infer from the induction hypothesis for $G\setminus e$ that
\[
\reg W=\reg I(G\setminus e) =\inmat(G\setminus e)+1\le g+1.
\]
Hence it suffices to show that the map
\[
\Tor_i(k,(e)\cap W)_j \xrightarrow{\phi_j} \Tor_i(k,W)_j
\]
is trivial for all $j\le i+g+1 \le 2g$.

Since $(e)\cap W$ and $W$ are squarefree monomial ideals, it suffices to prove the claim for squarefree multidegrees; see \cite[Theorem 2.1 and Lemma 2.2]{GPW}. So we will show that $\phi_m=0$ for each squarefree monomial $m$ of degree $\le 2g$. Furthermore, by the just cited results, it is harmless to assume that $e$ divides $m$ and $\supp(m) \subseteq V(G)$. 

Let $G^{\dagger}$ be the induced subgraph of $G$ on the vertex set $\supp m$, and $W_{\le m}$ the submodule of $W$ generated by elements whose multidegree divides $m$. By \cite[Proposition 3.10]{PV}, there is a chain
\[
\Tor^R_i(k, W)_m \cong \Tor^R_i(k, W_{\le m})_m= \Tor^R_i(k, I(G^{\dagger} \setminus e))_m 
\]
for all $i\ge 0$. For the same reason, 
\[
\Tor_i^R(k, (e)\cap W)_m \cong \Tor_i^R(k, (e)\cap I(G^{\dagger}\setminus e))_m 
\]
for all $i\ge 0$. Note that $e$ is automatically a co-two-pair of $G^\dagger$, and $|V(G^\dagger)|=|\supp(m)| \le 2g$.

Denote $S=k[x_i \mid x_i \in \supp(m)]$. If $\inmat(G^\dagger)\le g-1$ then as $G^\dagger$ is also weakly chordal, the induction hypothesis for $G^\dagger$ implies that the map 
\[
\Tor^S_i(k,(e)\cap I(G^{\dagger}\setminus e)) \longrightarrow \Tor^S_i(k, I(G^{\dagger}\setminus e))
\]
is trivial for all $i\ge 0$. Since $S\longrightarrow R$ is a faithfully flat extension, we also get that
\[
\Tor^R_i(k,(e)\cap I(G^{\dagger}\setminus e)) \longrightarrow \Tor^R_i(k, I(G^{\dagger}\setminus e))
\]
is trivial for all $i\ge 0$. In particular, the last map is trivial for $i \le g-1$, which is the desired conclusion.

If $\inmat(G^\dagger)\ge g$, then having at most $2g$ vertices, there is no possibility for $G^\dagger$ other than being a $gK_2$. After relabeling the vertices of $G^\dagger$, assume that $G^\dagger$ has vertices $x_1,\ldots,x_g,z_1,\ldots,z_g$, edges $x_1z_1,\ldots,x_gz_g$, and $e=x_1z_1$. Denote $J=(x_2z_2,\ldots,x_gz_g) \subseteq S=k[x_1,\ldots,x_g,z_1,\ldots,z_g]$. What we have to show is
\[
\Tor^S_i(k,(x_1z_1)\cap J) \longrightarrow \Tor^S_i(k, J) 
\]
is trivial for $i\le g-1$. This is easy: in fact we prove the claim for all $i\ge 0$. First note that $(x_1z_1)\cap J=(x_1z_1)J$, and $x_1z_1$ is $J$-regular. Hence the map 
\[
\Tor^S_i(k,(x_1z_1)\cap J) \longrightarrow \Tor^S_i(k, J) 
\]
is nothing but the multiplication with $x_1z_1$ of $\Tor^S_i(k, J)$. The latter is a trivial map.

This finishes the induction step and the proof of the theorem.
\end{proof}
\begin{rem}
\label{rem_splitting_edge}
Let $R=k[x_1,\ldots,x_n]$ be a standard graded polynomial ring over $k$ (where $n\ge 0$), and $I$ a monomial ideal of $R$. Let $J, K$ be monomial ideals of $R$ such that $I=J+K$. A sufficient condition for the decomposition $I=J+K$ to be a Betti splitting is that $I$ is a {\it splittable ideal} with the (Eliahou-Kervaire) splitting as $J+K$; see \cite[Definition 2.3 and Theorem 2.4]{HV2}.

Following H\`a and Van Tuyl \cite[Definition 3.1]{HV2}, we say that an edge $e$ of a graph $G$ is a {\it splitting edge} if the decomposition $I(G)=(e)+I(G\setminus e)$ makes $I(G)$ into a splittable ideal. Splitting edges (of hypergraphs) are characterized in \cite[Theorem 3.2]{HV2}. By the above discussion, any splitting edge yields a Betti splitting in the sense that if $e$ is a splitting edge of $G$, then $I(G)=(e)+I(G\setminus e)$ is a Betti splitting.

Theorem \ref{thm_weaklychordal}(i) produces a new class of Betti splittings which do not come from splitting edges. For example, let $G=C_4$ and $e$ be any of its edge. The conditions of Theorem \ref{thm_weaklychordal} are satisfied, so $I(G)=(e)+I(G\setminus e)$ is a Betti splitting. Nevertheless, it is easy to check that $e$ is not a splitting edge of $G$ since it does not satisfy the condition specified in \cite[Theorem 3.2]{HV2}.
\end{rem} 

\section{Cycles}
\label{sect_cycles}
The linearity defect of edge ideals of anticycles is known; we recall the statement here. The following lemma is contained in \cite[Theorem 5.1]{OkaYan}, which calls upon the case $d=2$ of Example 4.7 in the same paper. We give a brief argument for the sake of clarity.
\begin{lem}[See Okazaki and Yanagawa {\cite[Theorem 5.1]{OkaYan}}]
\label{lem_anticycle}
Let $G$ be the anticycle of length $n\ge 4$. Then $\lind I(G)=n-3$.
\end{lem}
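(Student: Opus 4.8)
The plan is to pin down the entire graded Betti table of $I=I(C_n^c)$ by Hochster's formula and then to read the linearity defect directly off the shape of the minimal free resolution. First I would set $\Delta$ to be the independence complex of $C_n^c$, so that $R/I=k[\Delta]$. The independent sets of $C_n^c$ are precisely the cliques of $C_n$, and for $n\ge 4$ the cycle $C_n$ has no triangle; hence $\Delta$ is the cyclic graph $C_n$ itself, viewed as a $1$-dimensional simplicial complex. By Hochster's formula (see, e.g., \cite{HH2}), $\beta_{i,j}(k[\Delta])=\sum_{W\subseteq V,\,|W|=j}\dim_k \widetilde H_{j-i-1}(\Delta|_W;k)$, where $\Delta|_W$ is the induced subcomplex on $W$. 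The two homology inputs I would record are: if $W\subsetneq V(C_n)$ then $\Delta|_W$ is a disjoint union of paths (some possibly a single point), so $\widetilde H_p(\Delta|_W;k)=0$ for $p\ge 1$ and $\widetilde H_0(\Delta|_W;k)\cong k^{c-1}$ with $c$ the number of connected components; while $\Delta|_V=C_n\simeq S^1$, so $\widetilde H_1(C_n;k)=k$ and $\widetilde H_0(C_n;k)=0$.

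A short count with these facts then yields: $\beta_{0,0}(k[\Delta])=1$; $\beta_{i,i+1}(k[\Delta])\ne 0$ exactly for $1\le i\le n-3$ (a subset $W$ of size $i+1$ can disconnect the cycle precisely when $2\le|W|\le n-2$); $\beta_{n-2,n}(k[\Delta])=1$; and all other Betti numbers vanish. Translating via $\beta_{i,j}(I)=\beta_{i+1,j}(R/I)$, the minimal free resolution $F$ of $I(C_n^c)$ has the linear strand $\beta_{i,i+2}(I)\ne 0$ for $0\le i\le n-4$, together with the single extra number $\beta_{n-3,n}(I)=1$ and nothing else. In particular $\projdim I(C_n^c)=n-3$, the module $F_i$ is generated in the single degree $i+2$ for $0\le i\le n-4$, and $F_{n-3}=R(-n)$.

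It then remains to extract the linearity defect. Since $F_i$ is generated in degree $i+2$ for $0\le i\le n-4$, the differentials $F_1\to F_0,\dots,F_{n-4}\to F_{n-5}$ all have linear entries, whereas $F_{n-3}=R(-n)$ maps into $F_{n-4}$, which is generated in degree $n-2$, so the last differential has entries of degree $n-(n-2)=2$. By the description of $\linp^R F$ recalled in Section~\ref{sect_background} (replace entries of degree $\ge 2$ by zero), the last differential of $\linp^R F$ is the zero map; since $F_{n-2}=0$ this forces $H_{n-3}(\linp^R F)=(\linp^R F)_{n-3}\ne 0$, hence $\lind I(C_n^c)\ge n-3$. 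As $\linp^R F$ has the same length as $F$, we also have $\lind I(C_n^c)\le\projdim I(C_n^c)=n-3$, and equality follows.

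The only genuine work is the homology computation underpinning the second paragraph — in particular, checking that beyond the linear strand the induced subcomplexes of $C_n$ contribute exactly one further Betti number, namely $\beta_{n-3,n}(I)=1$ coming from $\widetilde H_1(C_n;k)=k$; once that is in hand the statement about $\lind$ is immediate. (Alternatively one may simply quote the Betti numbers from \cite[Theorem 5.1]{OkaYan} and use only the third paragraph.)
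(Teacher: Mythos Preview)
Your argument is correct and follows a genuinely different route from the paper's. The paper invokes the fact that $R/I(C_n^c)$ is Gorenstein of dimension $2$ (citing \cite[Theorem~5.6.1]{BH}), so the minimal free resolution of $R/I$ is self-dual of length $n-2$; duality then forces $F_{n-2}\cong R(-n)$ and $F_{n-3}\cong R(-(n-2))^m$, whence the last differential has quadratic entries and $H_{n-2}(\linp^R F)\ne 0$. You instead identify the independence complex of $C_n^c$ with the $1$-skeleton $C_n$ and compute the entire Betti table directly via Hochster's formula, then read off the same conclusion about the last differential.

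What each approach buys: the paper's Gorenstein argument is shorter and more conceptual, needing no subset-by-subset homology bookkeeping, but it imports a structural result. Your computation is self-contained and in fact yields strictly more --- the exact location of every nonzero $\beta_{i,j}$, not merely the symmetry of the resolution --- at the cost of the combinatorial case analysis on induced subgraphs of $C_n$. Both arguments terminate identically: a rank-one top free module mapping by purely quadratic entries into the linear strand, so that $(\linp^R F)_{\text{top}}$ survives as homology and $\lind I=\projdim I=n-3$.
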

\begin{proof}
It is well-known, e.g. from \cite[Theorem 5.6.1]{BH}, that $R/I(G)$ is Gorenstein of dimension $2$. Therefore the resolution of $R/I(G)$ is symmetric of length $n-2$. In particular, the last differential matrix of the minimal free resolution $F$ of $R/I(G)$ is a column of elements of degree $2$. This implies that $H_{n-2}(\linp^R F)\neq 0$. Therefore $\lind_R R/I(G)=n-2$.

Since $n-2\ge 2$, the last equality implies that $\lind_R I(G)=n-3$.
\end{proof}
The main result of this section is
\begin{thm}
\label{thm_cycle}
Let $C_n$ be the cycle of length $n$, where $n\ge 3$. Then $\lind I(C_n)=2\lfloor \frac{n-2}{3} \rfloor$.
\end{thm}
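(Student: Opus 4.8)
The plan is to argue by induction on $n$, combining the $x_n$-partition of $I(C_n)$ with the Betti-splitting machinery of Section~\ref{sect_Betti_split} and the value of the linearity defect of path ideals. As a preliminary observation, the path $P_m$ (on $m$ vertices) is chordal, hence weakly chordal, so Theorem~\ref{thm_weaklychordal} gives $\lind I(P_m)=\inmat(P_m)-1$; a one-line count on the path shows $\inmat(P_m)=\lfloor(m+1)/3\rfloor$, so $\lind I(P_m)=\lfloor(m-2)/3\rfloor$. In particular the formula we want to prove is exactly $\lind I(C_n)=2\lind I(P_n)$. The base cases are quick: for $n=3,4$ the cycle $C_n$ is weakly chordal with $\inmat(C_n)=1$, so $\lind I(C_n)=0=2\lfloor(n-2)/3\rfloor$; for $n=5$, $C_5$ is its own complement, i.e.\ the anticycle of length $5$, so Lemma~\ref{lem_anticycle} yields $\lind I(C_5)=5-3=2$. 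I would also dispose of $n=6$ and $n=7$ by hand, computing the (very short) minimal free resolution of $R/I(C_n)$ via Hochster's formula and reading off where its linear part ceases to be exact, which comes down to a small rank count. These two cases must be treated separately because, as will be clear below, the inductive argument only controls $\lind I(C_n)$ up to the intrinsic ambiguity in Corollary~\ref{cor_y-splitting} precisely in the residue classes $n\equiv 0,1\pmod 3$ when $n$ is small.

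For the inductive step, fix $n\ge 8$ and set $y=x_n$. The $y$-partition reads $I(C_n)=J+yL$ with $J=I(P_{n-1})$ the edge ideal of the path $x_1-x_2-\cdots-x_{n-1}$ and $L=(x_1,x_{n-1})$. Since $L$ is generated by variables it is a Koszul ideal, so Corollary~\ref{cor_y-splitting} applies: $I(C_n)=J+yL$ is a Betti splitting, $\lind(yL)=\lind L=0$, and, writing $W:=J\cap L$,
\[
\lind I(C_n)\le\max\{\lind J,\ \lind W+1\},\qquad \lind W\le\max\{\lind J,\ \lind I(C_n)-1\}.
\]
By the preliminary, $\lind J=\lind I(P_{n-1})=\lfloor n/3\rfloor-1$.

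The heart of the argument is the computation of $\lind W$, which I expect to be the main obstacle. An elementary colon computation gives $W=x_1(J:x_1)+x_{n-1}(J:x_{n-1})$ with $J:x_1=(x_2)+I(P')$ and $J:x_{n-1}=(x_{n-2})+I(P'')$, where $P'$ is the path on $x_3,\dots,x_{n-1}$ and $P''$ the path on $x_1,\dots,x_{n-3}$. My plan is a nested induction on $W$: apply an $x_1$-partition (and symmetrically an $x_{n-1}$-partition) to $W$, use \cite[Lemma~4.10]{NgV} to discard the superfluous variable summands $(x_2)$, $(x_{n-2})$ without changing linearity defects, and reduce to a linearity defect of exactly the same shape for a shorter cycle; the outcome should be $\lind W=\lind I(C_{n-3})+1=2\lfloor(n-2)/3\rfloor-1$. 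The delicate points are that the auxiliary ideal playing the role of ``$L$'' in these inner splittings is Koszul only for a bounded range of $n$ (so one cannot iterate the $x_1$-partition naively and must use the $x_1$- and $x_{n-1}$-splittings together, or fall back on Proposition~\ref{prop_Koszul_decomp}), and that one must keep track of which branch of the Corollary~\ref{cor_y-splitting} dichotomy occurs at each stage.

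Finally I would assemble the bounds. The upper bound is immediate from the first inequality: $\lind I(C_n)\le\max\{\lfloor n/3\rfloor-1,\ 2\lfloor(n-2)/3\rfloor\}=2\lfloor(n-2)/3\rfloor$. For the lower bound, since for $n\ge 8$ one has $\lind W=2\lfloor(n-2)/3\rfloor-1>\lfloor n/3\rfloor-1=\max\{\lind J,\lind(yL)\}$, the second inequality forces $\lind W\le\lind I(C_n)-1$, i.e.\ $\lind I(C_n)\ge 2\lfloor(n-2)/3\rfloor$; combined with the upper bound this gives equality. Together with the base cases $n\in\{3,4,5,6,7\}$ handled above, this completes the induction. (The same strict-inequality trick also covers all $n\equiv 2\pmod 3$ with $n\ge 5$; it is only for $n=6,7$ that $\lind W$ fails to strictly exceed $\lind J$, which is exactly why those two are done by hand.) The main difficulty throughout is thus the identification of $\lind W$ for the intersection ideal, and, relatedly, resolving the Corollary~\ref{cor_y-splitting} ambiguity in the borderline residue classes.
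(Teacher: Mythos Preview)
Your overall architecture matches the paper's proof almost exactly: the same $x_n$-partition $I(C_n)=I(P_{n-1})+x_n(x_1,x_{n-1})$, the same reduction to the intersection $W=I(P_{n-1})\cap(x_1,x_{n-1})$, the same target $\lind W=\lind I(C_{n-3})+1$, and the same special handling of $n=6,7$. Two points, however, are genuine gaps rather than details to be filled in routinely.

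\textbf{The inner Betti splitting.} You write $W=x_1J'+x_{n-1}L'$ with $J'=(x_2)+I(P_{n-4})$ and $L'=(x_{n-2})+I(P_{n-4})$ (after discarding the variable via \cite[Lemma~4.10]{NgV}), and you want this to be a Betti splitting so that Theorem~\ref{thm_Betti_splitting} relates $\lind W$ to $\lind(J'\cap L')$. But neither Corollary~\ref{cor_y-splitting} nor Proposition~\ref{prop_Koszul_decomp} applies here: both require the ideal playing the role of $L$ to be Koszul, and for $n\ge 8$ neither $J'$ nor $L'$ is Koszul (their linearity defect is $\lfloor(n-2)/3\rfloor-1\ge 1$). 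You flag this difficulty but do not resolve it. The paper's fix is short and direct: since $x_1J'\cap x_{n-1}L'=x_1x_{n-1}(J'\cap L')$, the map $\Tor_i(k,x_1x_{n-1}(J'\cap L'))\to\Tor_i(k,x_1J')$ factors through multiplication by $x_{n-1}$ on $\Tor_i(k,x_1J')$, hence is zero, and symmetrically for $x_{n-1}L'$; thus $W=x_1J'+x_{n-1}L'$ is a Betti splitting by Lemma~\ref{lem_equivalence}. Once this is in hand, the identification $J'\cap L'\cong I(C_{n-3})$ (which you also leave implicit) is a one-line computation, and the induction runs exactly as you outline.

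\textbf{The base cases $n=6,7$.} Your plan to read off the linearity defect from ``Hochster's formula and a small rank count'' is not a proof. Hochster's formula gives the multigraded Betti numbers, but as the introduction of the paper stresses, the linearity defect cannot in general be read from the Betti table; one must know the differentials. The paper handles $C_6$ by exhibiting a \emph{different} Betti splitting $I(C_6)=U+V$ with $U,V$ both $2$-linear and showing $(U\cap V)_{\langle 3\rangle}$ is not $3$-linear, and handles $C_7$ by explicitly writing down the degree-$3$ syzygies of $I(C_7)$ and checking that they admit no linear syzygy. Either carry out analogous explicit arguments, or at least make clear that you are computing the actual maps in $\linp^R F$ and not merely the ranks.
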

\begin{proof}
We prove by induction on $n$. For simplicity, we omit the subscript concerning the ring in the notation of linearity defect. The case $n \in \{3, 4\}$ is a straightforward application of Fr\"oberg's theorem. The case $n=5$ follows from Lemma \ref{lem_anticycle}.

Assume that the conclusion is true up to $n\ge 5$, we establish it for $n+1$. Let $P_n$, as usual, be the path with edges $x_1x_2,x_2x_3,\ldots,x_{n-1}x_n$. By Corollary \ref{cor_y-splitting} we have that the $x_{n+1}$-partition $I(C_{n+1}) = I(P_n) + x_{n+1}(x_1,x_n)$ is a Betti splitting since $(x_1,x_n)$ is Koszul. By Corollary \ref{cor_y-splitting} we also get
\begin{align}
\lind I(C_{n+1}) &\le \max\{\lind I(P_n),\lind \left(I(P_n)\cap (x_1,x_n)\right) +1\}, \label{eq_ineq2}\\
\lind \left(I(P_n)\cap (x_1,x_n)\right) &\le \max\{\lind I(P_n), \lind I(C_{n+1})-1\} \label{eq_ineq3}.
\end{align}
Denote $I=I(P_n)\cap (x_1,x_n)$. Then
\[
I=x_1(x_2,x_3x_4,x_4x_5,\ldots,x_{n-2}x_{n-1})+ x_n(x_2x_3,x_3x_4,\ldots,x_{n-3}x_{n-2},x_{n-1}).
\]
By abuse of notation, denote $I(P_{n-4})=(x_3x_4,x_4x_5,\ldots,x_{n-3}x_{n-2})$. By convention, for $n=5$, $I(P_1)=0$. 
Denote $J=(x_2,x_3x_4,x_4x_5,\ldots,x_{n-2}x_{n-1})=(x_2,x_{n-2}x_{n-1})+I(P_{n-4})$, and 
$$
L=(x_2x_3,x_3x_4,\ldots,x_{n-3}x_{n-2},x_{n-1})=(x_2x_3,x_{n-1})+I(P_{n-4}).
$$ 
With the above notation, $I=x_1J+x_nL$.

\medskip
\noindent
\textsf{Claim:} The decomposition $I=x_1J+x_nL$ is a Betti splitting.

\medskip
\noindent
Indeed, consider the exact sequence
\[
\xymatrixcolsep{5mm}
\xymatrixrowsep{2mm}
\xymatrix{
0 \ar[rr]&&  x_1J\cap x_nL=x_1x_n(J\cap L)  \ar[rr] &&  x_1J\oplus x_nL \ar[rr] && I \ar[rr]&& 0.
}
\]
Take $i\ge 0$. The map $\Tor^R_i(k,x_1x_n(J\cap L))\to \Tor^R_i(k,x_1J)$ factors through $\Tor^R_i(k,x_1x_nJ)\to \Tor^R_i(k,x_1J)$, which is the trivial map. Hence the former map is also trivial. Arguing similarly for the map $\Tor^R_i(k,x_1x_n(J\cap L))\to \Tor^R_i(k,x_nL)$, we get the claim.

Note that $J\cong (x_2)+I(P_{n-3})$ and $L\cong (x_{n-1})+I(P_{n-3})$. Hence by \cite[Lemma 4.10(ii)]{NgV},
\[
\lind J=\lind L=\lind I(P_{n-3}).
\]
Thanks to Corollary \ref{cor_forest} (which is a direct consequence of Theorem \ref{thm_weaklychordal}), we then obtain 
\[
\lind J=\lind L=\left\lfloor \frac{n-2}{3} \right\rfloor-1.
\] 
Furthermore, $J\cap L=(x_2x_3,x_3x_4,\ldots,x_{n-2}x_{n-1},x_2x_{n-1}) \cong I(C_{n-2})$. Hence by the induction hypothesis,
\[
\lind (x_1J\cap x_nL)=\lind (J\cap L) = \lind I(C_{n-2})= 2\left\lfloor \frac{n-4}{3} \right\rfloor.
\]
Since $I=x_1J+x_nL$ is a Betti splitting, Theorem \ref{thm_Betti_splitting} yields the inequalities
\begin{align}
\lind I &\le \max\{\lind J, \lind L, \lind (J\cap L)+1\}, \label{eq_ineq4}\\
\lind (J\cap L) &\le \max\{\lind J, \lind L, \lind I-1\}. \label{eq_ineq5}
\end{align}
Now we distinguish three cases according to whether $n=5$, or $n=6$, or $n\ge 7$.

\medskip
\noindent
\textsf{Case 1:} Consider the case $n=5$. Now $J=(x_2,x_3x_4)$, $L=(x_2x_3,x_4)$ and $I=x_1J+x_5L=(x_1x_2,x_4x_5,x_1x_3x_4,x_2x_3x_5)$. Since $J\cap L=(x_2x_3,x_3x_4,x_2x_4)$, we see from \eqref{eq_ineq4} that $\lind I\le 1$. Using \eqref{eq_ineq2}, we obtain $\lind I(C_6)\le 2$.

Let $U=(x_1x_6,x_1x_2,x_2x_3), V=(x_3x_4,x_4x_5,x_5x_6)$, then they are ideals with $2$-linear resolutions. Clearly $I(C_6)=U+V$. Obviously $U\cap V\subseteq \mm U$ and $U\cap V\subseteq \mm V$, hence Lemma \ref{lem_persistence}(b1) implies that the decomposition $I(C_6)=U+V$ is a Betti splitting. Using Theorem \ref{thm_Betti_splitting}, we obtain an inequality
\[
\lind (U\cap V) \le \lind I(C_6)-1.
\]
If $\lind I(C_6)<2$ then $\lind (U\cap V)=0$. On the other hand, $(U\cap V)_{\left<3\right>}=(x_1x_5x_6,x_2x_3x_4)$ does not have $3$-linear resolution. This is a contradiction. So $\lind I(C_6)=2,$ as desired.

\medskip
\noindent
\textsf{Case 2:} Consider the case $n=6$. Arguing as in the case $n=5$, we obtain $\lind I(C_7) \le 2$. 

Let the presentation of $I(C_7)$ be $F/M$, where $F=R(-2)^7$ has a basis $e_1,\ldots, e_7$ such that $e_i$ maps to $x_ix_{i+1}$ for $i=1,\ldots,6$ and $e_7$ maps to $x_7x_1$. Since $M\subseteq \mm F$, obviously $M_j=0$ for $j\le 2$. It is easy to check that the following $7$ elements belong to $k$-vector space $M_3$ and they are $k$-linearly independent:
\begin{gather*}
f_1=x_3e_1-x_1e_2, f_2=x_4e_2- x_2e_3, f_3=x_5e_3-x_3e_4, f_4=x_6e_4-x_4e_5, \\
f_5=x_7e_5-x_5e_6, f_6=x_1e_6-x_6e_7, f_7=x_2e_7-x_7e_1.
\end{gather*}
There is an exact sequence of $k$-vector spaces $ 0 \longrightarrow  M_3  \longrightarrow  F_3 \longrightarrow I(C_7)_3 \longrightarrow 0.
$ It is not hard to see that $\dim_k F_3=49$ and $\dim_k I(C_7)_3=42$, hence $\dim_k M_3=7$. In particular, $M_3$ is generated by exactly the above elements. 

Let $N=M_{\left<3\right>}$. If $\lind I(C_7)\le 1$ then $M$ must be Koszul, hence $N$ must have a $3$-linear resolution. Note that $N$ is not a free module since we can check directly that
\[
x_4x_5x_6x_7f_1+x_1x_5x_6x_7 f_2+x_1x_2x_6x_7f_3+x_1x_2x_3x_7f_4+x_1x_2x_3x_4 f_5+x_2x_3x_4x_5f_6+x_3x_4x_5x_6f_7=0.
\]
In particular, $N$ has at least one non-trivial linear syzygy. So there exist linear forms $a_1,\ldots,a_7$ in $R$, not all of which are zero, such that
\[
a_1f_1+\cdots+a_7f_7=0.
\]
Looking at the coefficients of $e_1$ and $e_2$, we get
\[
a_1x_3=a_7x_7, a_1x_1=a_2x_4.
\]
This implies that $x_7$ and $x_4$ divide $a_1$, which yields $a_1=0$. From the two equations in the last display, we deduce that $a_2=a_7=0$.

Similarly, we get $a_3=\cdots=a_6=0$, a contradiction. Hence $N$ does not have a $3$-linear resolution, and thus $\lind I(C_7)\ge 2$. Hence $\lind I(C_7)=2$, as desired. 

\medskip
\noindent
\textsf{Case 3:} Now assume that $n\ge 7$. Elementary considerations show that $\lind (J\cap L)=2\left\lfloor \frac{n-4}{3} \right \rfloor>\left\lfloor \frac{n-2}{3} \right\rfloor-1=\lind J=\lind L$. Hence from the inequalities \eqref{eq_ineq4} and \eqref{eq_ineq5}, we obtain $\lind I=\lind (J\cap L)+1=2\left\lfloor \frac{n-4}{3} \right\rfloor+1$.

From the above discussions, $\lind I=2\left\lfloor \frac{n-4}{3} \right\rfloor+1 >\left\lfloor \frac{n-2}{3} \right\rfloor= \lind I(P_n)$ for all $n\ge 7$. So inspecting \eqref{eq_ineq2} and \eqref{eq_ineq3}, we obtain
\[
\lind I(C_{n+1})=\lind I+1 =2\left\lfloor \frac{n-4}{3} \right\rfloor+2 =2\left\lfloor \frac{n-1}{3} \right\rfloor.
\]
The induction step and hence the proof is now completed. 
\end{proof}

\section{Applications}
\label{sect_applications}
\subsection{Regularity}
The proof of Theorem \ref{thm_weaklychordal} yields the following consequence.
\begin{cor}[Woodroofe, {\cite[Theorem 14]{W}}]
\label{cor_Woodroofe}
Let $G$ be a weakly chordal graph with at least one edge. Then there is an equality
\[
\reg I(G)=\inmat(G)+1.
\]
\end{cor}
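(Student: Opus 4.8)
The plan is to observe that this equality is not new: it is exactly statement (S3) of the Claim that is carried through the simultaneous induction in the proof of Theorem \ref{thm_weaklychordal}. There, for every weakly chordal graph $G$ with at least one edge and by induction on $|E(G)|$ (and $\inmat(G)$), one establishes (S1) (the $\Tor$-triviality that produces a Betti splitting), (S2) ($\lind I(G)=\inmat(G)-1$), and (S3) ($\reg I(G)=\inmat(G)+1$). So the first thing I would do is simply point to that argument; the corollary is literally one of the three conclusions of that induction.

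For a self-contained rendering one extracts the regularity half. The bound $\reg I(G)\ge\inmat(G)+1$ holds for every graph with an edge by Corollary \ref{cor_indmatch}, so only the reverse inequality is at issue, and I would prove it by induction on $|E(G)|$, mirroring the proof of Theorem \ref{thm_weaklychordal}. The base case (one edge) gives $\reg I(G)=2=\inmat(G)+1$. For the inductive step, Lemma \ref{lem_twopair_existence} applied to the weakly chordal graph $G^c$ produces a co-two-pair $e=xy$ of $G$; by Theorem \ref{thm_weaklychordal}(i) the decomposition $I(G)=(e)+I(G\setminus e)$ is a Betti splitting, so the regularity formula for Betti splittings (the corollary following Theorem \ref{thm_Betti_splitting}) gives
\[
\reg I(G)=\max\bigl\{\, 2,\ \reg I(G\setminus e),\ \reg\bigl((e)\cap I(G\setminus e)\bigr)-1 \,\bigr\}.
\]
Now $G\setminus e$ is weakly chordal with $\inmat(G\setminus e)\le\inmat(G)$ by Lemmas \ref{lem_copair_remove} and \ref{lem_weaklychordal}, so induction bounds the middle term by $\inmat(G)+1$; and writing $L$ for the ideal of the variables in $N(x)\cup N(y)\setminus\{x,y\}$ and $H$ for the induced subgraph of $G$ on $V(G)\setminus(N(x)\cup N(y))$, one has $(e)\cap I(G\setminus e)=e\bigl(L+I(H)\bigr)$, hence $\reg\bigl((e)\cap I(G\setminus e)\bigr)=\reg I(H)+2\le\inmat(H)+3\le\inmat(G)+2$, using the induction hypothesis for the induced (hence weakly chordal) subgraph $H$ together with $\inmat(H)\le\inmat(G)-1$. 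Combining the three terms yields $\reg I(G)\le\inmat(G)+1$.

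I do not expect a genuine obstacle here, since every ingredient — the co-two-pair, the Betti splitting it induces, the behaviour of $\inmat$ under passage to $G\setminus e$ and to $H$, and the identity $\reg(L+I(H))=\reg I(H)$ for ideals supported on disjoint variables — has already been assembled in the proof of Theorem \ref{thm_weaklychordal}. The only point worth double-checking is that last additivity, which follows from the K\"unneth-type identity $\reg(M\otimes_k N)=\reg M+\reg N$ applied to $R/(L+I(H))$, viewed as the tensor product of the regular ring $R/L$ with an edge ring in the complementary variables, together with the base-change invariance of regularity recorded in Corollary \ref{cor_omit_subscripts}.
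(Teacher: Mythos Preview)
Your proposal is correct and matches the paper's approach: the paper's proof of Corollary~\ref{cor_Woodroofe} is precisely the observation that the equality is statement (S3) established in the simultaneous induction inside the proof of Theorem~\ref{thm_weaklychordal}, which is exactly your first paragraph. Your subsequent ``self-contained rendering'' is also correct and mirrors Step~1 of that proof; two minor remarks: the regularity formula you cite appears in the corollary \emph{preceding} (not following) Theorem~\ref{thm_Betti_splitting}, and in fact the paper obtains the upper bound $\reg I(G)\le\inmat(G)+1$ directly from the short exact sequence \eqref{eq_ses1} without invoking the Betti splitting equality, which is all that is needed once the lower bound from Corollary~\ref{cor_indmatch} is in hand.
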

\begin{rem}
Woodroofe's proof of his result depends on the Kalai-Meshulam's inequality \cite{KM}, which asserts that for a polynomial ring $R=k[x_1,\ldots,x_n]$ (where $n\ge 1$), and squarefree monomial ideals $I_1,\ldots,I_m$ (where $m\ge 2$), there is an inequality
\[
\reg (I_1+I_2+\cdots +I_m) \le \reg I_1+\reg I_2+\cdots +\reg I_m -m+1.
\]
However, the following example shows that there is no hope for a straightforward analog of the Kalai-Meshulam's inequality for linearity defect.
\begin{ex}
Take $I$ to be the edge ideal of the anticycle of length $n$ where $n\ge 5$. Let $I_1$ be the edge ideal of the induced subgraph of the anticycle on the vertex set $\{x_1,\ldots,x_{n-1}\}$, and $I_2=(x_2x_n,\ldots,x_{n-2}x_n)$. Clearly $I=I_1+I_2$.
Moreover $\lind_R I_1=0$, since $I_1$ is the edge ideal of a co-chordal graph, and $\lind_R I_2=0$ since $I_2 \cong (x_2,\ldots,x_{n-2})$. On the other hand, by Lemma \ref{lem_anticycle}, $\lind_R (I_1+I_2)=n-3$.
\end{ex}
\end{rem}

\subsection{Chordal graphs}
In view of Example \ref{ex_chordal}, an immediate corollary of Theorem \ref{thm_weaklychordal} is
\begin{cor}
\label{cor_chordal}
Let $G$ be a chordal graph with at least one edge. Then there is an equality 
$$
\lind I(G)=\inmat(G)-1.
$$
\end{cor}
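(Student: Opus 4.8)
The plan is to observe that this is an immediate specialization of Theorem \ref{thm_weaklychordal}(ii). First I would recall from Example \ref{ex_chordal} that every chordal graph $G$ is weakly chordal: indeed $G$ has no induced cycle of length $>3$ by definition, and $G^c$ has no induced cycle of length $\ge 5$ since such a cycle in $G^c$ of length $5$ would give (by passing to complements) an induced $C_5$ in $G$, while one of length $\ge 6$ would contain an induced $2K_2$ in $G^c$, hence an induced $C_4$ in $G$ — both contradictions. This verification already appears verbatim in Example \ref{ex_chordal}, so in the write-up I would simply cite it.

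Having established that $G$ is weakly chordal and has at least one edge by hypothesis, I would then invoke Theorem \ref{thm_weaklychordal}(ii) directly to conclude $\lind I(G) = \inmat(G) - 1$. That is the entire argument.

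There is no real obstacle here: the corollary contains no new mathematical content beyond the (already recorded) implication ``chordal $\Rightarrow$ weakly chordal'' and the main Theorem \ref{thm_weaklychordal}. The only thing to be careful about is that the hypothesis ``at least one edge'' must be carried over so that Theorem \ref{thm_weaklychordal} applies (if $G$ had no edges then $I(G) = 0$ and both sides would need separate, trivial, interpretation). Accordingly the proof is one sentence: \emph{By Example \ref{ex_chordal} a chordal graph is weakly chordal, so the claim follows from Theorem \ref{thm_weaklychordal}.}
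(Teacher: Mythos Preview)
Your proposal is correct and matches the paper's primary justification exactly: the paper introduces Corollary \ref{cor_chordal} with the sentence ``In view of Example \ref{ex_chordal}, an immediate corollary of Theorem \ref{thm_weaklychordal} is\ldots'' The paper additionally supplies a self-contained alternative proof via Dirac's simplicial-vertex theorem and the Betti splitting machinery, but this is optional extra content, not the main argument.
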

We can give a simplified proof of this result, using a Betti splitting statement in \cite{HV2}. 
\begin{proof}[Alternative proof of Corollary \ref{cor_chordal}]
By Corollary \ref{cor_indmatch}, it is enough to show that 
$$
\lind I(G) \le \inmat(G)-1.
$$
We use induction on $|V(G)|$ and $|E(G)|$. The case $G$ has at most $3$ vertices is immediate. It is equally easy if $|E(G)|=1$. Assume that $|V(G)|\ge 4$ and $|E(G)|\ge 2$. 

By a classical result due to Dirac, there exists a vertex of $G$ whose neighbors form a clique (such a vertex is called a {\it simplicial vertex}). Working with a connected component of $G$ with at least one edge if necessary, we can assume that the vertex in question has at least one neighbor. Assume that $x$ is a simplicial vertex and $y$ is a vertex in $N(x)$. By \cite[Lemma 5.7(i)]{HV2} and the discussion in Remark \ref{rem_splitting_edge}, $I(G)=(xy)+I(G \setminus xy)$ is a Betti splitting.

By Theorem \ref{thm_Betti_splitting}, we obtain
\begin{equation}
\label{eq_ineq_I(G)}
\lind  I(G) \le \max\{\lind  I(G \setminus xy), \lind (I(G \setminus xy)\cap (xy))+1\}.
\end{equation}
Let $L$ be the ideal generated by the variables in $N(x) \cup N(y) \setminus \{x,y\}$. Let $H$ be the induced subgraph of $G$ on the vertex set $V(G)\setminus (N(x)\cup N(y))$. Then 
$$
I(G \setminus xy)\cap (xy) =(xy)(I(G\setminus xy):xy)=(xy)(L+I(H)).
$$
In particular, \cite[Lemma 4.10(ii)]{NgV} yields the second equality in the following display
$$
\lind (I(G \setminus xy)\cap (xy))=\lind (L+I(H))=\lind I(H).
$$
Substituting in \eqref{eq_ineq_I(G)}, it follows that
\[
\lind I(G)\le \max\{\lind I(G \setminus xy), \lind I(H)+1\}.
\]
We know that $G \setminus xy$ and $H$ are also chordal graphs; see \cite[Lemma 5.7]{HV2}.  Moreover, we have $\inmat(H)\le \inmat(G)-1$ as seen in the proof of Theorem \ref{thm_weaklychordal}. 

It is routine to check that $xy$ is a co-two-pair. Thus by Lemma \ref{lem_copair_remove}, we obtain $\inmat(G \setminus xy)\le \inmat(G)$. Now by the induction hypothesis,
\[
\lind I(G)\le \max\{\lind I(G \setminus xy), \lind I(H)+1\} \le \max\{\inmat(G \setminus xy)-1, \inmat(H)\},
\]
which is not larger than $\inmat(G)-1$. The proof is now completed.
\end{proof}

Recall that $G$ is called a {\em forest} if it contains no cycle. The connected components of a forest are trees. As a consequence of Corollary \ref{cor_chordal}, we get
\begin{cor}
\label{cor_forest}
Let $G$ be a forest with at least one edge. Then $\lind I(G)=\inmat(G)-1$.

In particular, let $P_n$ be the path of length $n-1$ \textup{(}where $n\ge 2$\textup{)}, then $\lind I(P_n)=\lfloor \frac{n-2}{3}\rfloor$.
\end{cor}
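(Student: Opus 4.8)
The plan is to derive the general statement directly from Corollary~\ref{cor_chordal}, and then to reduce the formula for paths to an elementary computation of the induced matching number. First I would note that a forest is in particular a chordal graph: it contains no cycle whatsoever, so \emph{a fortiori} it has no induced cycle of length greater than $3$. Hence Corollary~\ref{cor_chordal} applies to any forest $G$ with at least one edge (no connectedness hypothesis is needed there) and yields $\lind I(G)=\inmat(G)-1$ at once. This settles the general assertion.

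It remains to treat $P_n$. By the first part $\lind I(P_n)=\inmat(P_n)-1$, and since $\lfloor (n+1)/3\rfloor-1=\lfloor (n-2)/3\rfloor$, it suffices to show $\inmat(P_n)=\lfloor (n+1)/3\rfloor$. For the lower bound I would exhibit the explicit matching consisting of the edges $x_{3j+1}x_{3j+2}$ for $0\le j\le \lfloor (n-2)/3\rfloor$. Its vertex set consists precisely of the $x_i$ with $i\not\equiv 0\pmod 3$ in the relevant range, so between any two consecutive chosen edges sits the vertex $x_{3j+3}$, which is \emph{not} in this vertex set; consequently the induced subgraph on the chosen vertices carries no edge of $P_n$ besides the chosen ones, i.e.\ it is a $gK_2$ with $g=\lfloor (n-2)/3\rfloor+1=\lfloor (n+1)/3\rfloor$. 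Thus $\inmat(P_n)\ge \lfloor (n+1)/3\rfloor$.

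For the upper bound, let $x_{a_1}x_{a_1+1},\ldots,x_{a_g}x_{a_g+1}$ with $a_1<\cdots<a_g$ be an induced matching of $P_n$. If $a_{t+1}=a_t+2$ for some $t$, then $x_{a_t+1}$ and $x_{a_{t+1}}=x_{a_t+2}$ are adjacent in $P_n$ and both lie in the induced subgraph, contradicting that it is a $gK_2$; hence $a_{t+1}\ge a_t+3$ for every $t$. Therefore $a_g\ge a_1+3(g-1)\ge 3g-2$, and combined with $a_g+1\le n$ this gives $3g-1\le n$, so $g\le \lfloor (n+1)/3\rfloor$. Combining the two bounds yields $\inmat(P_n)=\lfloor (n+1)/3\rfloor$, hence $\lind I(P_n)=\inmat(P_n)-1=\lfloor (n-2)/3\rfloor$. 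No genuine obstacle arises; the only spots demanding slight care are the arithmetic identity $\lfloor (n+1)/3\rfloor-1=\lfloor (n-2)/3\rfloor$ and the gap-counting underlying the upper bound for $\inmat(P_n)$.
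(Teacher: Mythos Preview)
Your proof is correct and follows essentially the same approach as the paper: reduce to Corollary~\ref{cor_chordal} via the observation that forests are chordal, then compute $\inmat(P_n)=\lfloor (n+1)/3\rfloor$. The paper simply asserts this last identity as a ``simple fact,'' whereas you supply the explicit lower and upper bounds; both are fine.
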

\begin{proof}
For the first part, note that any forest is chordal. Hence Corollary \ref{cor_chordal} applies.

For the second, use the simple fact that $\inmat(P_n)=\lfloor \frac{n+1}{3}\rfloor$. 
\end{proof}

\subsection{Linearity defect one}
\label{sect_ld1}
Now we prove Theorem \ref{thm_main1} from the introduction. Together with Theorem \ref{thm_Froeberg}, the next result gives the extension of Fr\"oberg's theorem advertised in the abstract.
\begin{thm}
\label{thm_ld1} 
Let $G$ be a graph. Then $\lind I(G) = 1$ if and only if $G$ is weakly chordal and $\inmat(G) = 2$.
\end{thm}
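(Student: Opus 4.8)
The plan is to prove both implications, making heavy use of the machinery already developed, especially Theorem \ref{thm_weaklychordal}, Corollary \ref{cor_indmatch}, and the cycle computation Theorem \ref{thm_cycle}.

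\textbf{Sufficiency.} Suppose $G$ is weakly chordal with $\inmat(G) = 2$. This is the easy direction: Theorem \ref{thm_weaklychordal}(ii) gives $\lind I(G) = \inmat(G) - 1 = 1$ directly. So essentially nothing new is needed here beyond citing the second main theorem.

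\textbf{Necessity.} Suppose $\lind I(G) = 1$; we must show $G$ is weakly chordal and $\inmat(G) = 2$. First, by Corollary \ref{cor_indmatch} we have $\inmat(G) - 1 \le \lind I(G) = 1$, so $\inmat(G) \le 2$. If $\inmat(G) \le 1$, then combined with weak chordality one would land in the $\lind = 0$ case of Fr\"oberg's theorem (Theorem \ref{thm_Froeberg}); but we have not yet established weak chordality, so instead I would argue directly that $\inmat(G) = 1$ forces $\lind I(G) = 0 \ne 1$ once we know $G$ is weakly chordal, hence the real content is: (a) $G$ is weakly chordal, and (b) $\inmat(G) \ne 1$. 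For (b), note that if $G$ is weakly chordal then Theorem \ref{thm_weaklychordal}(ii) gives $\lind I(G) = \inmat(G) - 1$, so $\lind I(G) = 1$ forces $\inmat(G) = 2$ automatically; thus (b) follows from (a). So the whole problem reduces to proving (a): \emph{if $G$ is not weakly chordal, then $\lind I(G) \ge 2$} (in fact $\ne 1$, but $\ge 2$ suffices).

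\textbf{The main obstacle: ruling out non-weakly-chordal graphs.} If $G$ is not weakly chordal, then either $G$ or $G^c$ contains an induced cycle $C_m$ with $m \ge 5$. I would split into cases. If $G$ has an induced $C_m$ with $m \ge 5$, then by Corollary \ref{cor_inducedsubgr}, $\lind I(G) \ge \lind I(C_m) = 2\lfloor \frac{m-2}{3}\rfloor \ge 2$ by Theorem \ref{thm_cycle} (since $m \ge 5$ gives $\lfloor\frac{m-2}{3}\rfloor \ge 1$). If instead $G^c$ has an induced $C_m$ with $m \ge 5$, this means $G$ has an induced \emph{anticycle} of length $m \ge 5$; by Corollary \ref{cor_inducedsubgr} and Lemma \ref{lem_anticycle}, $\lind I(G) \ge \lind I(\overline{C_m}) = m - 3 \ge 2$. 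In either case $\lind I(G) \ge 2$, contradicting $\lind I(G) = 1$. Hence $G$ must be weakly chordal, completing the argument. I expect the only subtlety is bookkeeping the two ways weak chordality can fail and confirming the numeric bounds $2\lfloor\frac{m-2}{3}\rfloor \ge 2$ and $m - 3 \ge 2$ both hold precisely when $m \ge 5$ (the anticycle bound is tight at $m=5$, giving $\lind = 2$, consistent with $C_5$ being self-complementary). No genuinely hard step arises: the theorem is essentially a packaging of Theorems \ref{thm_weaklychordal} and \ref{thm_cycle} together with Corollary \ref{cor_inducedsubgr} and Lemma \ref{lem_anticycle}.
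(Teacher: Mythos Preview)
Your proof is correct and the argument for weak chordality is identical to the paper's: both use Corollary \ref{cor_inducedsubgr} together with the cycle and anticycle computations (Theorem \ref{thm_cycle} and Lemma \ref{lem_anticycle}) to rule out induced $C_m$ or $C_m^c$ with $m\ge 5$.

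There is a small but genuine difference in how $\inmat(G)=2$ is deduced. You go directly: once $G$ is weakly chordal, Theorem \ref{thm_weaklychordal}(ii) gives $\lind I(G)=\inmat(G)-1$, so $\lind I(G)=1$ forces $\inmat(G)=2$. The paper instead argues via regularity: using Taylor's resolution, every minimal first syzygy of $I(G)$ has degree $\le 4$; since $\lind I(G)\le 1$ the first syzygy module is Koszul, hence $\reg I(G)\le 3$; since $\lind I(G)>0$ one has $\reg I(G)=3$; then Woodroofe's formula (Corollary \ref{cor_Woodroofe}) yields $\inmat(G)=2$. Your route is shorter and avoids the syzygy-degree discussion, but both rest on the same core result (Theorem \ref{thm_weaklychordal}, of which Corollary \ref{cor_Woodroofe} is a byproduct), so the difference is one of packaging rather than substance.
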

\begin{proof}
For the ``only if'' direction:  By Lemma \ref{lem_anticycle} and Theorem \ref{thm_cycle}, the linearity defect of any cycle/anticycle of length at least $5$ is greater than or equal to $2$. Hence Corollary \ref{cor_inducedsubgr} implies that $G$ has to be weakly chordal. Obviously, for example by using Taylor's resolution, we have that any homogeneous syzygy of $I(G)$ is either linear or quadratic, hence the first syzygy of $I(G)$ is generated in degree at most $4$. But $\lind I(G)\le 1$, so the first syzygy of $I(G)$ is Koszul, hence from Section \ref{sect_regularity}, its regularity is also at most $4$. As $I(G)$ is generated in degree $2$, this implies that $\reg I(G) \le 3$. But $\lind I(G)>0$, so $\reg I(G) = 3$. By Corollary \ref{cor_Woodroofe}, we deduce that $\inmat(G)=2$, as desired. 

The ``if'' direction follows from Theorem \ref{thm_weaklychordal}.
\end{proof}

\subsection{Projective dimension}
\label{sect_projdim}
First, recall that a graph $G=(V,E)$ is called a bipartite graph if 
\begin{enumerate}
\item the vertex set $V$ is a disjoint union of two subsets $V_1$ and $V_2$,
\item if two vertices are adjacent then they are not both elements of $V_i$ for any $i\in \{1,2\}$. 
\end{enumerate}
In this case, the decomposition of $V$ as $V_1\cup V_2$ is called its {\it bipartite partition}. We also denote $G$ by $(V_1,V_2,E)$ given a bipartite partition $V_1\cup V_2$ of the vertex set. 

A bipartite graph $G=(V_1,V_2,E)$ is called a {\it complete bipartite graph} if $E=\{\{x,y\}: x\in V_1,y\in V_2\}$.

In this section, we will use the notion of a strongly disjoint family of complete bipartite subgraphs, introduced by Kimura \cite{Ki2}, to compute the projective dimension of edge ideals of weakly chordal graphs. For a graph $G$, we consider all families of (non-induced) subgraphs $B_1,\ldots,B_g$ of $G$ such that
\begin{enumerate}
\item each $B_i$ is a complete bipartite graph for $1\le i\le g$,
\item the graphs $B_1,\ldots,B_g$ have pairwise disjoint vertex sets,
\item there exist an induced matching $e_1,\ldots,e_g$ of $G$ for which $e_i \in E(B_i)$ for $1\le i\le g$.
\end{enumerate}
Such a family is termed a {\it strongly disjoint family of complete bipartite subgraphs}. We define
\[
d(G)=\max \left\{\sum_{i=1}^g |V(B_i)|-g \right\}
\]
where the maximum is taken over all the strongly disjoint families of complete bipartite subgraphs $B_1,\ldots,B_g$ of $G$.

The following result is a generalization of \cite[Theorem 4.1(1)]{Ki1} and \cite[Corollary 3.3]{DE}; the latter was reproved in \cite[Corollary 5.3]{Ki2}.
\begin{thm}
\label{thm_projdim_weaklychordal}
Let $G$ be a weakly chordal graph with at least one edge. Then there is an equality
\[
\projdim I(G)=d(G)-1.
\]
\end{thm}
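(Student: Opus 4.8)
The plan is to prove Theorem \ref{thm_projdim_weaklychordal} by induction on $|E(G)|$, running the argument in parallel with the inductive scheme already established in the proof of Theorem \ref{thm_weaklychordal} and its corollaries. The base case $|E(G)|=1$ is trivial: $\projdim I(G)=1$ and $d(G)=2$. For the inductive step, since $G$ is weakly chordal and not the empty graph, I would invoke Lemma \ref{lem_twopair_existence} applied to $G^c$ to obtain a co-two-pair edge $e=xy$ in $G$. By Theorem \ref{thm_weaklychordal}(i), the decomposition $I(G)=(e)+I(G\setminus e)$ is a Betti splitting, so by the Betti splitting formula for projective dimension (the corollary to Lemma \ref{lem_equivalence} quoted from H\`a and Van Tuyl),
\[
\projdim I(G)=\max\{\projdim I(G\setminus e),\ \projdim\bigl((e)\cap I(G\setminus e)\bigr)+1\}.
\]
As in the proof of Theorem \ref{thm_weaklychordal}, write $(e)\cap I(G\setminus e)=e(L+I(H))\cong(L+I(H))(-2)$, where $L$ is the ideal generated by the variables in $N(x)\cup N(y)\setminus\{x,y\}$ and $H=G\setminus(N(x)\cup N(y))$. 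Since $L$ and $I(H)$ live in disjoint sets of variables, $\projdim(L+I(H))=\projdim L+\projdim I(H)=(|V(L)|-1)+\projdim I(H)$ where $|V(L)|=|N(x)\cup N(y)\setminus\{x,y\}|$ (with the convention that an empty $L$ contributes $0$, i.e. $\projdim 0 = -\infty$, handled separately). Both $G\setminus e$ and $H$ are weakly chordal (Lemma \ref{lem_weaklychordal} and the induced-subgraph remark), so the induction hypothesis gives $\projdim I(G\setminus e)=d(G\setminus e)-1$ and $\projdim I(H)=d(H)-1$.

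It then remains to translate the recursion into the combinatorial language of $d(\cdot)$. The key combinatorial lemma I would isolate and prove is: for a co-two-pair edge $e=xy$ of $G$,
\[
d(G)=\max\bigl\{\,d(G\setminus e),\ \ |N(x)\cup N(y)|+d(H)\,\bigr\},
\]
where $H=G\setminus(N(x)\cup N(y))$. The ``$\geq$'' direction of this lemma is the easier half: given a strongly disjoint family of complete bipartite subgraphs of $G\setminus e$, it is automatically one in $G$ (edges of $G\setminus e$ are edges of $G$, and by Lemma \ref{lem_copair_remove} the supporting induced matching stays induced in $G$), giving $d(G)\geq d(G\setminus e)$; and given an optimal strongly disjoint family $B_1,\dots,B_g$ in $H$ with supporting induced matching $e_1,\dots,e_g$, one adjoins the complete bipartite graph $B_0$ on bipartition $\bigl(N(x)\cup N(y)\bigr)\setminus(\text{stuff})$ — more precisely the complete bipartite graph with parts $\{x\}\cup(N(y)\setminus\{x\})$... here I need to be a little careful and build $B_0$ as the complete bipartite subgraph on vertex set $N[x]\cup N[y]$ minus overlaps, containing the edge $xy$; since $H$ avoids all of $N(x)\cup N(y)$, the families are vertex-disjoint and $e,e_1,\dots,e_g$ is still an induced matching (again by the co-two-pair property, exactly as in Lemma \ref{lem_copair_remove}), yielding $d(G)\geq |V(B_0)|-1+d(H)=|N(x)\cup N(y)|+d(H)$.

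The ``$\leq$'' direction is where I expect the main obstacle, and it is the heart of the proof. Take an optimal strongly disjoint family $B_1,\dots,B_g$ of $G$ with supporting induced matching $e_1,\dots,e_g$. If none of the $e_i$ equals $e$ and no $B_i$ uses the edge $e$, then after possibly discarding $e$ from whichever $B_i$ contains it (which can only happen if removing $e$ keeps $B_i$ complete bipartite — one must argue $B_i\setminus e$ still contains a valid matching edge, or that $e$ was not needed), the family descends to $G\setminus e$, giving $d(G)\leq d(G\setminus e)$. The substantive case is when some $e_i$, say $e_1$, equals the co-two-pair edge $xy$. Here I would argue that because $xy$ is a co-two-pair — so any $u\in N(x)\setminus N[y]$ and $v\in N(y)\setminus N[x]$ are non-adjacent, forcing no induced $2K_2$ straddling $x$ and $y$ — every other matching edge $e_i$ ($i\geq 2$) must have both endpoints outside $N(x)\cup N(y)$, hence lies in $H$; and moreover the corresponding $B_i$ has all its vertices outside $N(x)\cup N(y)$ (a vertex of $B_i$ adjacent into $N(x)\cup N(y)$ would, together with the structure, create a forbidden configuration — this needs the co-two-pair property and the complete bipartiteness of $B_i$ combined, and is the delicate point). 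Granting that, $B_2,\dots,B_g$ is a strongly disjoint family in $H$, so $\sum_{i\geq 2}|V(B_i)|-(g-1)\leq d(H)$, while $|V(B_1)|\leq |N(x)\cup N(y)|$ since $B_1$ is a complete bipartite subgraph containing $xy$ whose vertex set, being a biclique through $xy$, is contained in $N[x]\cup N[y]=N(x)\cup N(y)$. Summing gives $\sum_i|V(B_i)|-g\leq |N(x)\cup N(y)|+d(H)$, as needed. Once this combinatorial lemma is in hand, the theorem follows by substituting the induction hypotheses into the Betti-splitting recursion and matching it term-by-term with the lemma, using $|V(L)| = |N(x)\cup N(y)| - 2$ so that $\projdim\bigl((e)\cap I(G\setminus e)\bigr)+1 = |N(x)\cup N(y)|-2+\projdim I(H)+1+1 = |N(x)\cup N(y)|+d(H)-1 = \bigl(|N(x)\cup N(y)|+d(H)\bigr)-1$, with the degenerate subcases $L=0$ or $H$ edgeless checked by hand. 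I expect the vertex-disjointness/induced-matching bookkeeping in the ``$\leq$'' direction — precisely, showing the pieces $B_i$ for $i\geq 2$ cannot poke into $N(x)\cup N(y)$ — to be the one genuinely nontrivial step; everything else is routine assembly.
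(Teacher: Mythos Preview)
Your outline has the right inductive skeleton, but it misidentifies which step is the substantive one, and the step you call ``the easier half'' actually contains the main gap.

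In the ``$\geq$'' direction of your combinatorial lemma you write that one ``adjoins the complete bipartite graph $B_0$'' on the vertex set $N(x)\cup N(y)$ containing the edge $xy$, and then move on. But the existence of such a $B_0$ is not obvious: there is no reason a priori why the set $N(x)\cup N(y)$ should support a complete bipartite subgraph of $G$ using \emph{all} of its vertices. (Your tentative bipartition ``$\{x\}\cup(N(y)\setminus\{x\})$'' does not work, since vertices of $N(y)\setminus N(x)$ need not be adjacent to $x$.) This existence statement is exactly the content of Lemma~\ref{lem_completebipartite} in the paper, whose proof is the longest single argument in this section; it builds the bipartition iteratively and uses the co-two-pair hypothesis in an essential way to rule out long induced paths in $G^c$. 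Without this lemma your inequality $d(G)\ge |N(x)\cup N(y)|+d(H)$ is unsupported.

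Conversely, the direction you flag as the hard one---the ``$\leq$'' half of your recursion---is not needed at all in the paper's approach. The paper does not prove a two-sided identity for $d(G)$; it only proves the inequality $\projdim I(G)\le d(G)-1$ by induction (for which the ``$\geq$'' half of your lemma, i.e.\ Lemma~\ref{lem_completebipartite}, suffices), and cites Kimura's general lower bound $\projdim I(G)\ge d(G)-1$ (valid for arbitrary graphs) for the reverse inequality. This sidesteps entirely the question of whether, in an optimal strongly disjoint family with $e_1=xy$, the remaining pieces $B_2,\dots,B_g$ can be confined to $H$ at the vertex level---a claim your sketch asserts but does not justify (nothing you have written prevents some $B_i$, $i\ge 2$, from containing a vertex of $N(x)\cup N(y)$ that is not an endpoint of $e_i$). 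So: drop the ``$\leq$'' direction, invoke Kimura, and redirect your effort toward proving that a complete bipartite subgraph on all of $N(x)\cup N(y)$ exists.
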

The inequality $\projdim I(G) \ge d(G)-1$ was established by Kimura's work \cite{Ki2}. The following lemma, which might be of independent interest, is the crux in proving the reverse inequality. We are grateful to an anonymous referee for suggesting the main idea of the proof of the lemma.

\begin{lem}
\label{lem_completebipartite}
Let $x_1x_2$ be a co-two-pair of a graph $G$. Then there is a complete bipartite subgraph of $G$ with the vertex set $N(x_1)\cup N(x_2)$ \textup{(}the last union need not be the bipartite partition for that subgraph\textup{)}.
\end{lem}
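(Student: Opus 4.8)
The plan is to read off the desired bipartition from the connected components of the complement graph restricted to $V:=N(x_1)\cup N(x_2)$. First I would record the trivial observations: since $x_1x_2$ is a co-two-pair, $x_1$ and $x_2$ are non-adjacent in $G^c$, hence adjacent in $G$, so $x_1\in N(x_2)$ and $x_2\in N(x_1)$; in particular both $x_1$ and $x_2$ belong to $V$. Write $H$ for the induced subgraph $G^c[V]$.

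The heart of the argument is the claim that $x_1$ and $x_2$ lie in different connected components of $H$. To see this I would argue by contradiction: if they lie in the same component, pick a shortest path in $H$ between them, say $x_1=v_0,v_1,\dots,v_k=x_2$. A shortest path in any graph is an induced path, so this is an induced path of $H$, and since $H=G^c[V]$ is itself an induced subgraph of $G^c$, the sequence $v_0,\dots,v_k$ is also an induced path of $G^c$. As $x_1,x_2$ are non-adjacent in $G^c$ we have $k\ge 2$, so the co-two-pair hypothesis forces $k=2$. Then $v_1\notin\{x_1,x_2\}$, and the edges $x_1v_1$ and $v_1x_2$ of $G^c$ say exactly that $v_1\notin N_G(x_1)$ and $v_1\notin N_G(x_2)$, i.e. $v_1\notin V$ --- contradicting $v_1\in V$.

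With the claim in hand, I would let $Q$ be the connected component of $x_1$ in $H$ and put $P:=V\setminus Q$; by the claim $x_2\in P$, so both $P$ and $Q$ are non-empty. For any $p\in P$ and $q\in Q$ (automatically $p\ne q$), the pair $\{p,q\}$ is not an edge of $H$, hence not an edge of $G^c$, hence it is an edge of $G$. Thus the subgraph of $G$ on the vertex set $V=P\sqcup Q$ whose edges are all pairs with one endpoint in $P$ and the other in $Q$ is a complete bipartite subgraph of $G$ with vertex set $N(x_1)\cup N(x_2)$, which is what we want.

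The only step requiring genuine care --- the ``hard part'', such as it is --- is the passage from a shortest path in the induced subgraph $G^c[V]$ to an induced path in $G^c$, since it is precisely there that the co-two-pair hypothesis gets invoked; after that the proof is a one-line neighborhood computation. One should also dispose of the degenerate possibility $V=\{x_1,x_2\}$, where the construction simply returns the single edge $x_1x_2$.
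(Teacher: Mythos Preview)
Your proof is correct and considerably cleaner than the paper's. The paper constructs the bipartition iteratively: starting from $V_{1,0}=\{x_1\}$ and $V_{2,0}=\{x_2\}$, at each step it enlarges $V_{i,n}$ by the vertices of $V$ having a non-neighbour (in $G$) inside $V_{i,n}$, and proves by a somewhat involved induction on $n$ that $V_{1,n}$ and $V_{2,n}$ stay disjoint and span a complete bipartite subgraph of $G$; the inductive step works by backtracking through the construction to exhibit an explicit induced path in $G^c$ of length $>2$, contradicting the co-two-pair hypothesis. In effect, the paper's $V_{1,n}$ is the radius-$n$ ball around $x_1$ in $H=G^c[V]$, so after stabilization its bipartition coincides with yours (up to where the extra components of $H$ are placed). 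Your argument short-circuits the whole induction by going directly to the connected components of $H$ and using the elementary fact that a shortest path is induced; this makes the role of the co-two-pair hypothesis transparent and turns a page-long proof into a few lines. The paper's hands-on approach does not seem to buy anything additional here.
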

\begin{proof}
Let $V$ be the set $N(x_1)\cup N(x_2)$. Define the subsets $V_{1,n}$ and $V_{2,n}$ of $V$ inductively on $n\ge 0$ as follows: $V_{1,0}=\{x_1\}, V_{2,0}=\{x_2\}$. For $n\ge 0$, we let
\[
V_{1,n+1}=V_{1,n} \cup \{z\in V:  V_{1,n} \not\subseteq N(z)\},
\]
and similarly
\[
V_{2,n+1}=V_{2,n} \cup \{z\in V: V_{2,n} \not\subseteq N(z)\}.
\]
Clearly $V_{1,n}\subseteq V_{1,n+1}$ and $V_{2,n}\subseteq V_{2,n+1}$ for all $n\ge 0$. We set $V_{1,-1}=V_{2,-1}=\emptyset$ for systematic reason. Our aim is to prove the following statements:
\begin{enumerate}
 \item $V_{1,n} \subseteq \{z\in V: V_{2,n-1} \subseteq N(z)\}$, and $V_{2,n} \subseteq \{z\in V: V_{1,n-1} \subseteq N(z)\}$,
 \item $V_{1,n}\cap V_{2,n}=\emptyset$,
 \item $G$ has a complete bipartite subgraph with the bipartite partition $V_{1,n}\cup V_{2,n}$.
\end{enumerate}
Let us use induction on $n$. If $n=0$, then (i) holds vacuously, while $V_{1,0}=\{x_1\}, V_{2,0}=\{x_2\}$ therefore (ii) and (iii) are also true. 

Assume that the statements (i) -- (iii) are true for $n\ge 0$. We establish them for $n+1$. 

\medskip
\noindent
\textsf{For (i):} if (i) was not true, we can assume that $V_{1,n+1} \not\subseteq \{z\in V: V_{2,n} \subseteq N(z)\}$. Choose $z\in V_{1,n+1}$ such that $V_{2,n} \not\subseteq N(z)$. Clearly $z\notin V_{1,n}$ because of the induction hypothesis for (iii). Hence the definition of $V_{1,n+1}$ forces $V_{1,n} \not\subseteq N(z)$. Again the last non-containment implies that $z\notin V_{2,n}$.

As $N(z)$ contains neither $V_{1,n}$ nor $V_{2,n}$, we can choose $x_{i,n}\in V_{i,n}$ such that $x_{i,n}\notin N(z)$ for $i=1,2$.

By the definition of $V_{1,n}$ we can choose $n_r\ge 0$ such that $x_{1,n}\in V_{1,n_r} \setminus V_{1,n_r-1}$ (recall that $V_{1,-1}=\emptyset$). Set $x_{1,n_r}=x_{1,n}$. Since $x_{1,n_r}\in V_{1,n_r} \setminus V_{1,n_r-1}$, there exists $x_{1,n_{r-1}} \in V_{1,n_r-1}$ such that $x_{1,n_r}$ and $x_{1,n_{r-1}}$ are not adjacent.

Continuing this argument, finally we find a sequence of indices $0=n_0<n_1<\cdots<n_r \le n$ and vertices $x_1=x_{1,n_0}, x_{1,n_1},\ldots,x_{1,n_r}=x_{1,n}$ such that $x_{1,n_i}\in V_{1,n_i}\setminus V_{1,n_i-1}$ for all $0\le i\le r$ and $x_{1,n_i}$ and $x_{1,n_{i+1}}$ are not adjacent for $0\le i\le r-1$.

Similarly, there exist a sequence of indices $0=m_0<m_1<\cdots <m_s\le n$ and vertices $x_2=x_{2,m_0},x_{2,m_1},\ldots,x_{2,m_s}=x_{2,n}$ such that $x_{2,m_j}\in V_{2,m_j}\setminus V_{2,m_j-1}$ for all $0\le j\le s$ and $x_{2,m_j}$ and $x_{2,m_{j+1}}$ are not adjacent for $0\le j\le s-1$.

Note that we have a  path, called $P$, with vertices
$$
x_1=x_{1,n_0},x_{1,n_1},\ldots,x_{1,n_r}=x_{1,n},z,x_{2,n}=x_{2,m_s},x_{2,m_{s-1}},\ldots,x_{2,m_0}=x_2
$$ 
connecting $x_1$ and $x_2$ in $G^c$ with length $r+s+2$ (see Figure \ref{figure_path}). We claim that this is an induced path and its length is $>2$.

\begin{figure}
\begin{tikzpicture}
\tikzstyle{solid node}=[circle,draw,inner sep=1.5,fill=black]
\node(0)[solid node]{}
[grow=north]
child[grow=30]{node[solid node]{}
child[grow=60]{node[solid node]{}
child[grow=90]{node[]{$\vdots$}
child{node[solid node]{}
child[grow=120]{node[solid node]{}}
}
}
}
}
child[grow=150]{node[solid node]{}
child[grow=120]{node[solid node]{}
child[grow=90]{node{$\vdots$}
child{node[solid node]{}
child[grow=60]{node[solid node]{}}
}
}
}
};
\node[below]at(0){$z$};
\node[left]at(0-2){$x_{1,n_r}$};
\node[left]at(0-2-1){$x_{1,n_{r-1}}$};
\node[left]at(0-2-1-1-1){$x_{1,n_1}$};
\node[left]at(0-2-1-1-1-1){$x_1=x_{1,n_0}$};
\node[right]at(0-1){$x_{2,m_s}$};
\node[right]at(0-1-1){$x_{2,m_{s-1}}$};
\node[right]at(0-1-1-1-1){$x_{2,m_1}$};
\node[right]at(0-1-1-1-1-1){$x_{2,m_0}=x_2$};
\end{tikzpicture}
\caption{The path $P$ in $G^c$}
\label{figure_path}
\end{figure}
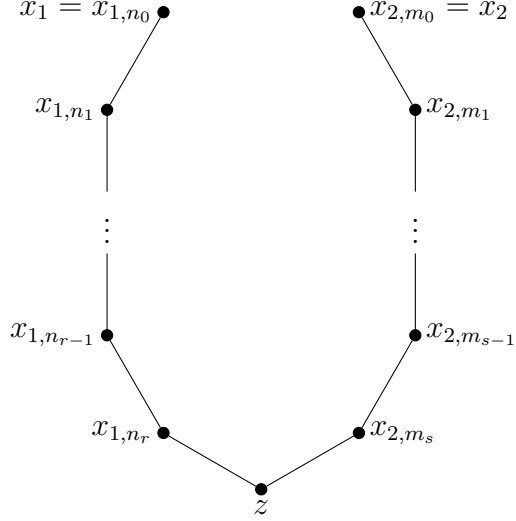

For the second part of the last claim, it suffices to observe that $r$ and $s$ cannot be both zero, otherwise $z\in V$ but $z\notin N(x_1)\cup N(x_2)$, which is absurd. For the first part, note that as $z\notin V_{1,n}\cup V_{2,n}$, we have $V_{1,n-1}\cup V_{2,n-1} \subseteq N(z)$. Hence $z$ is adjacent (in $G$) to all the vertices in $P$ except $x_{1,n}$ and $x_{2,n}$. Since $G$ has a complete bipartite subgraph with vertex set $V_{1,n}\cup V_{2,n}$, it is clear that $x_{1,n_i}$ is adjacent to $x_{2,m_j}$ for all $0\le i\le r, 0\le j\le s$. 

Now consider $0\le i, j\le r$ such that $i\le j-2$. We wish to show that $x_{1,n_i}$ and $x_{1,n_j}$ are adjacent. As $x_{1,n_j} \in V_{1,n_j}\setminus V_{1,n_j-1}$, we see that $V_{1,n_j-2} \subseteq N(x_{1,n_j})$. Since $i\le j-2$, clearly $n_i \le n_j-2$, hence $x_{1,n_i} \in V_{1,n_j-2} \subseteq N(x_{1,n_j})$. Therefore $x_{1,n_i}$ and $x_{1,n_j}$ are adjacent. Similarly, for $0\le i,j\le s$ with $i\le j-2$, the vertices $x_{2,m_i}$ and $x_{2,m_j}$ are adjacent. 

This shows that $P$ is an induced path connecting $x_1$ and $x_2$ in $G^c$ with length $>2$. But then we get a contradiction, since $x_1x_2$ is a co-two-pair.

In other words, we have $V_{1,n+1} \subseteq \{z\in V: V_{2,n}\subseteq N(z)\}$ and similarly $V_{2,n+1} \subseteq \{z\in V: V_{1,n}\subseteq N(z)\}$. This finishes the induction step for (i).

\medskip
\noindent
\textsf{For (ii):} assume that there exists $z\in V_{1,n+1}\cap V_{2,n+1}$. As we have seen,
\[
V_{2,n+1}\subseteq \{z\in V: V_{1,n}\subseteq N(z)\}.
\]
So the definition of $V_{1,n+1}$ yields $z\in V_{1,n}$. Similarly, $z\in V_{2,n}$, but then $V_{1,n}\cap V_{2,n}\neq \emptyset$, a contradiction. This finishes the induction step for (ii).

\medskip
\noindent
\textsf{For (iii):} taking $z_1\in V_{1,n+1}$ and $z_2\in V_{2,n+1}$, we want to show that $\{z_1,z_2\}\in E(G)$. First, assume that $z_1\in V_{1,n}$. From (i), we have the second inclusion in the following chain
\[
z_2\in V_{2,n+1}\subseteq \{z\in V: V_{1,n}\subseteq N(z)\},
\]
hence $z_1$ is adjacent to $z_2$. Hence it suffices to consider the case $z_1\notin V_{1,n}$ and for the same reason, we restrict ourselves to the case $z_2\notin V_{2,n}$. 

Note that $z_1\notin V_{2,n}$ since $V_{1,n+1}\cap V_{2,n}\subseteq V_{1,n+1}\cap V_{2,n+1}=\emptyset$. Hence $z_1\notin V_{1,n}\cup V_{2,n}$, and the same thing happens for $z_2$. 

Assume that on the contrary, $\{z_1,z_2\} \notin E(G)$. As in the induction step for (i), we can choose a sequence of indices $0=n_0<\cdots<n_r \le n+1$ (where $r\ge 0$) and elements $x_1=x_{1,n_0},x_{1,n_1},\ldots,x_{1,n_r}=z_1$ such that $x_{1,n_i}\in V_{1,n_i}\setminus V_{1,n_i-1}$ for $0\le i\le r$ and $x_{1,n_i}$ and $x_{1,n_{i+1}}$ are not adjacent for $0\le i\le r-1$. Similarly, we can choose a sequence of indices $0=m_0<\cdots<m_s \le n+1$ (where $s\ge 0$) and elements $x_2=x_{2,m_0},x_{2,m_1},\ldots,x_{2,m_s}=z_2$ with the similar properties. Since $z_1\notin V_{1,n}$ and $x_1\in V_{1,n}$, we have that $r\ge 1$. Analogously, $s\ge 1$.

Since $\{z_1,z_2\} \notin E(G)$, we have a path 
$$
x_1=x_{1,n_0},x_{1,n_1},\ldots,x_{1,n_r}=z_1,z_2=x_{2,m_s},x_{2,m_{s-1}},\ldots,x_{2,m_0}=x_2
$$ 
of length $r+s+1\ge 3$ connecting $x_1$ and $x_2$ in $G^c$. As in the induction step for (i), we see that the last path is an induced path in $G^c$. Again this yields a contradiction since $x_1x_2$ is a co-two-pair. So $\{z_1,z_2\} \in E(G)$, as desired. This finishes the induction step for (iii), so that all the statements (i)--(iii) are true.

Since $\{V_{1,n}\}_{n\ge 0}$ and $\{V_{2,n}\}_{n\ge 0}$ are two monotonic sequences (with respect to inclusion) consisting of subsets of the finite set $V$, there exist $q\ge 0$ such that $V_{1,n}=V_{1,n+1}, V_{2,n}=V_{2,n+1}$ for all $n\ge q$. 

If $V=V_{1,q}\cup V_{2,q}$ then we are done by statement (iii). Otherwise, consider an element $z\in V \setminus (V_{1,q}\cup V_{2,q})$. We have $z\notin V_{1,q+1}=V_{1,q}$, so $V_{1,q} \subseteq N(z)$. Enlarging $V_{2,q}$ with all such elements $z$ of $V$, again we see that there is a complete bipartite subgraph of $G$ with the vertex set $V$. Hence the lemma is now proved.
\end{proof}
\begin{proof}[Proof of Theorem \ref{thm_projdim_weaklychordal}]
That $\projdim I(G) \ge d(G)-1$ follows from \cite[Theorem 1.1]{Ki2}. We prove the reverse inequality by induction on $|E(G)|$. If $|E(G)|=1$ then there is nothing to do. Assume that $|E(G)|\ge 2.$

Let $e=x_1x_2$ be a co-two-pair of $G$, which exists because of Lemma \ref{lem_twopair_existence}. Denote $V=N(x_1)\cup N(x_2)$. Let $L$ be the ideal generated by the variables in $V \setminus \{x_1,x_2\}$ and $H$ be the induced subgraph of $G$ on the vertex set $G\setminus V$. Again we have
\[
I(G\setminus e):x_1x_2=L+I(H).
\]
Denote $p=|V|-2$; note that $p\ge 0$ since $x_1,x_2\in V$. If $I(H)=0$ then $L\neq 0$ since $|E(G)|\ge 2$. From the exact sequence
\begin{equation}
\label{eq_ses2}
\xymatrixcolsep{5mm}
\xymatrixrowsep{2mm}
\xymatrix{
0 \ar[rr]&&  (x_1x_2)(L+I(H))   \ar[rr] && I(G\setminus e) \oplus (x_1x_2) \ar[rr]&& I(G) \ar[rr]&& 0
} 
\end{equation}
we get that
\[
\projdim I(G) \le \max\{\projdim I(G\setminus e), \projdim L+1\}=\max\{\projdim I(G\setminus e), p\}.
\]
Since $G\setminus e$ is again a weakly chordal graph, by the induction hypothesis $\projdim I(G\setminus e) \le d(G\setminus e)-1\le d(G)-1$. The last inequality follows from the fact that $d(G\setminus e) \le d(G)$, which in turn follows easily from Lemma \ref{lem_copair_remove}.

By Lemma \ref{lem_completebipartite}, there is an complete bipartite subgraph $B_1$ of $G$ with the vertex set $V$.
Now $B_1$ has $p+2$ vertices, hence by the definition of $d(G)$, it follows that $(p+2)-1=p+1 \le d(G)$. Finally 
\[
\projdim I(G) \le \max\{\projdim I(G\setminus e), p\} \le d(G)-1,
\]
as desired.

Assume that $I(H)\neq 0$. Since $L$ and $I(H)$ live in different polynomial subrings of $R$ and $L$ has codimension $p$, we obtain
\[
\projdim(L+I(H)) =\projdim I(H)+p.
\]
Since $H$ is weakly chordal with fewer edges than $G$, by the induction hypothesis 
\[
\projdim I(H)=d(H)-1.
\]
Let $B_2,\ldots,B_g$ be a strongly disjoint family of complete bipartite subgraphs of $H$ which realizes $d(H)$. Note that if $e_2,\ldots,e_g$ form an induced matching of $H$, where $e_i\in B_i$, then $e,e_2,\ldots,e_g$ is an induced matching in $G$. Therefore $B_1,B_2,\ldots,B_g$ is a strongly disjoint family of complete bipartite subgraphs of $G$. In particular,
\[
d(G) \ge |V(B_1)|+\sum_{i=2}^g|V(B_i)|-g=p+2+d(H)-1=\projdim I(H)+p+2.
\]
All in all, we see that
\[
\projdim(L+I(H)) =\projdim I(H)+p \le d(G)-2.
\]
As above $\projdim I(G\setminus e)=d(G\setminus e)-1\le d(G)-1$. So from the exact sequence \eqref{eq_ses2}, we obtain
\[
\projdim I(G) \le \max\{\projdim I(G\setminus e), \projdim (L+I(H))+1\} \le d(G)-1.
\] 
This finishes the induction and the proof of the theorem.
\end{proof}
\begin{rem}
See also, e.g., \cite{KhM}, \cite{Ki1}, \cite{Ki2}, for more results about the relationship between the projective dimension of $I(G)$ and invariants coming from families of complete bipartite subgraphs of $G$.
\end{rem}

\section{Characteristic dependence}
\label{sect_dependence_characteristic}
It is well-known that the regularity of edge ideals depend on the characteristic of the field. In fact Katzman \cite{K} shows that if $R=k[x_1,\ldots,x_n]$ is a polynomial ring of dimension $n\le 10$, then any edge ideal on the vertex set $\{x_1,\ldots,x_n\}$ has characteristic-independent regularity. On the other hand, from dimension $11$ onward, there are examples of edge ideals with characteristic-dependent regularity. 

The following example is taken from Katzman's paper \cite[Page 450]{K}. It comes from a triangulation of the projective plane $\mathbb P^2_k$. The Macaulay2 package \cite{NgV2} is employed in our various computations of the linearity defect.
\begin{ex}
\label{ex_dependence}
Let $I\subseteq k[x_1,\ldots,x_{11}]$ be the following edge ideal:
\begin{gather*}
I=(x_1x_2, x_1x_6, x_1x_7, x_1x_9, x_2x_6, x_2x_8, x_2x_{10}, x_3x_4, x_3x_5, x_3x_7, x_3x_{10},\\
x_4x_5, x_4x_6, x_4x_{11}, x_5x_8, x_5x_9, x_6x_{11}, x_7x_9, x_7x_{10}, x_8x_9, x_8x_{10}, x_8x_{11}, x_{10}x_{11}).
\end{gather*}
Computations with Macaulay2 \cite{GS} show that $\lind I=3$ if $\chara k =0$ and $\lind I=7$ if $\chara k=2$.
\end{ex}
So for any $m\ge 0$, applying \cite[Lemma 4.10]{NgV}, we see that the edge ideal $I+(y_1z_1,\ldots,y_mz_m) \subseteq k[x_1,\ldots,x_{11},y_1,\ldots,y_m,z_1,\ldots,z_m]$ has linearity defect $\lind I +\lind (y_1z_1,\ldots,y_mz_m)+1=m+3$ if $\chara k=0$ and $m+7$ if $\chara k=2$. Moreover, we can replace $I+(y_1z_1,\ldots,y_mz_m)$ by the edge ideal of a connected graph using Lemma \ref{lem_cone}. Indeed, take a new vertex $y$, then the edge ideal $I+(y_1z_1,\ldots,y_mz_m)+y(x_1,\ldots,x_{11},y_1,\ldots,y_m) \subseteq k[x_1,\ldots,x_{11},y_1,\ldots,y_m,z_1,\ldots,z_m,y]$ comes from a connected graph and has the same linearity defect as $I+(y_1z_1,\ldots,y_mz_m)$.
\begin{lem}
\label{lem_cone}
Let $J\subseteq R=k[x_1,\ldots,x_n]$ be a monomial ideal which does not contain a linear form. Let $L$ be an ideal generated by variables such that $J\subseteq L$.  Consider the ideal $I=J+yL$ in the polynomial extension $S=R[y]$. There is an equality $\lind_R J=\lind_S I$.
\end{lem}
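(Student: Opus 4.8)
The plan is to build a specific minimal free resolution of $I$ over $S=R[y]$ out of the Betti splitting $I=J+yL$, and then to compare its linear part with $\linp^R F$, where $F$ is the minimal free resolution of $J$ over $R$.

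First I would dispose of the degenerate case: if $L=0$ then $J=0$ and both linearity defects vanish, so assume $L\neq 0$. Since $L$ is generated by a subset of the variables, $LS$ is generated by a regular sequence of linear forms, hence is Koszul over $S$; also $I=J+yL$ is visibly the $y$-partition of $I$. As $J$ contains no linear form, every minimal monomial generator $g$ of $J$ has degree $\ge 2$, and $g\in L$ forces some $x_i\in\Gc(L)$ to divide $g$, so $g=x_i\cdot(g/x_i)\in\mm_R L$; thus $J\subseteq\mm_R L$. Now Corollary \ref{cor_y-splitting} applies: $I=J+yL$ is a Betti splitting, so by Lemma \ref{lem_equivalence}(iii) the mapping cone $G$ of a lift $\widetilde\phi$ of the inclusion $y(JS)\hookrightarrow JS\oplus yL$ to minimal free resolutions over $S$ is a \emph{minimal} free resolution of $I$. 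Concretely one takes $\mathrm{res}_S(JS)=F\otimes_R S$, $\mathrm{res}_S\bigl(y(JS)\bigr)=(F\otimes_R S)(-1)$, and $\mathrm{res}_S(yL)=K$ a shifted Koszul complex, which is already linear and satisfies $H_i(K)=0$ for $i\ge 1$. After the identification $y(JS)\cong(JS)(-1)$, the component of $\widetilde\phi$ into $F\otimes_R S$ is multiplication by $y$, while its component into $K$ is a lift of $JS\hookrightarrow LS$; the latter has all entries in $\mm_S$ because the map $\Tor_i(k,JS)\to\Tor_i(k,LS)$ induced by the inclusion is trivial for all $i$, by Lemma \ref{lem_persistence}(b1) (using $LS$ Koszul and $J\subseteq\mm_R L$).

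The inequality $\lind_R J\le\lind_S I$ is Lemma \ref{lem_retract} applied to the algebra retract $R\hookrightarrow S$, $y\mapsto 0$ (one checks $JS\subseteq I$ and that the image of $I$ in $R$ is $J$). For the reverse, set $\ell=\lind_R J$ and examine $\linp^S(G)$. The ``$B$-part'' of $G$, where $B=\mathrm{res}_S(JS)\oplus\mathrm{res}_S(yL)$, is a subcomplex of $G$, and $\linp^S(B)=\bigl(\linp^R F\otimes_R S\bigr)\oplus K$ — the first summand by the base-change isomorphism \eqref{eq_complexiso} for the flat map $R\to S$, the second because $K$ is already linear. The quotient $\linp^S(G)/\linp^S(B)$ is a homological shift of $\linp^R F\otimes_R S$. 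In the long exact homology sequence of $0\to\linp^S(B)\to\linp^S(G)\to\linp^S(G)/\linp^S(B)\to 0$, the connecting map has, in its component into $H_\bullet\bigl(\linp^R F\otimes_R S\bigr)$, multiplication by $y$; since $y$ is a nonzerodivisor on $N\otimes_R S$ for every $R$-module $N$, the connecting map is injective. Because $H_i\bigl(\linp^R F\otimes_R S\bigr)=H_i(\linp^R F)\otimes_R S=0$ for $i>\ell$ and $H_i(K)=0$ for $i\ge 1$, a short chase in the long exact sequence yields $H_i(\linp^S G)=0$ for every $i>\ell$. Hence $\lind_S I\le\ell=\lind_R J$, and together with the previous inequality we get the desired equality.

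The main obstacle — and the reason Corollary \ref{cor_y-splitting} alone does not finish the job, since it gives only $\lind_S I\le\lind_R J+1$ here (because $J\cap L=J$, so $\lind(J\cap L)=\lind_R J$) — is the cross term of $\widetilde\phi$ mapping into the Koszul complex $K$. This term genuinely contributes to $\linp^S(G)$ as soon as $J$ has a degree-$2$ generator, so $\linp^S(G)$ is not simply the mapping cone of $y\cdot\id$ on $\linp^R F\otimes_R S$. What must be checked with care is that this cross term flows only out of the cone's copy of $\linp^R F\otimes_R S$ and into $K$, so that it enters the long exact sequence only through the second (harmless) component of the connecting map and never disturbs the injectivity coming from multiplication by $y$; the vanishing $H_i(K)=0$ for $i\ge 1$ then absorbs its effect in positive homological degrees. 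Keeping the homological and internal degree shifts in $\linp^S$ of a mapping cone straight is the remaining piece of bookkeeping to watch.
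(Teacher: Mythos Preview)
Your argument is correct, but it follows a different route from the paper's. The paper does not use the Betti splitting $I=JS+yL$ at all; instead it works with the short exact sequence
\[
0\longrightarrow yL\longrightarrow I\longrightarrow J/(yL\cap J)=J/yJ\longrightarrow 0,
\]
observes that $yL$ is Koszul with $yL\cap\pp I=\pp(yL)$ by degree reasons, and then invokes Lemma~\ref{lem_persistence}(a1) together with Lemma~\ref{lem_exactseq}(i) to obtain $\lind_S I\le\lind_S(J/yJ)$. The identification $J/yJ\cong J\otimes_k(k[y]/(y))$ and an external tensor-product lemma (\cite[Lemma~4.9]{NgV}) then give $\lind_S(J/yJ)=\lind_R J$. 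Your approach is more hands-on: you build the minimal resolution of $I$ explicitly as a mapping cone and analyze $H_\bullet(\linp^S G)$ via the long exact sequence coming from the subcomplex $\linp^S B$. What this buys you is self-containment---you avoid the external lemma from \cite{NgV} and make visible exactly why the naive bound $\lind_S I\le\lind_R J+1$ from Corollary~\ref{cor_y-splitting} overshoots by one: the connecting map is injective because its first component is multiplication by the nonzerodivisor $y$. The paper's proof is considerably shorter because it packages the same mechanism inside Lemma~\ref{lem_exactseq}, at the price of citing one more result.
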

\begin{proof}
By Lemma \ref{lem_retract}, we get $\lind_R J\le \lind_S I$. For the reverse inequality, let $\pp=(x_1,\ldots,x_n,y)\subseteq S$. Consider the exact sequence
\[
0 \longrightarrow yL  \longrightarrow  I \longrightarrow \frac{J}{yL \cap J}=\frac{J}{yJ} \longrightarrow 0.
\]
The equality follows from the fact that $yL \cap J=y(L \cap J)=yJ$. Note that $yL$ is Koszul and $yL \cap \pp I=\pp yL$ by degree reasons. Hence by either \cite[Theorem 3.1]{Ng}, or Lemma \ref{lem_persistence}(a1) together with Lemma \ref{lem_exactseq}(i), we get the first inequality in the following chain
\[
\lind_S I \le \max\left\{0,\lind_S  \frac{J}{yJ}\right\}=\lind_S  \frac{J}{yJ}=\lind_S \left(J\otimes_k \frac{k[y]}{yk[y]}\right) =\lind_R J.
\]
The last equality follows from \cite[Lemma 4.9]{NgV}. This completes the proof.
\end{proof}

The linearity defect of edge ideals of bipartite graphs also may depend on the characteristic.
\begin{ex}
\label{ex_bipartite}
Dalili and Kummini \cite[Example 4.8]{DK} found an example of a bipartite graph such that the regularity of the corresponding edge ideal depends on the characteristic. Specifically, their ideal is
\begin{gather*}
I=(x_1y_1, x_2y_1, x_3y_1, x_7y_1, x_9y_1, x_1y_2, x_2y_2, x_4y_2, x_6y_2, x_{10}y_2,x_1y_3, x_3y_3, x_5y_3,\\
x_6y_3, x_8y_3, x_2y_4, x_4y_4, x_5y_4, x_7y_4, x_8y_4,x_3y_5, x_4y_5, x_5y_5, x_9y_5, x_{10}y_5,\\
x_6y_6, x_7y_6, x_8y_6, x_9y_6, x_{10}y_6) \subseteq k[x_1,\ldots,x_{10},y_1,\ldots,y_6].
\end{gather*}
Computations with Macaulay2 using our package \cite{NgV2} show that $\lind I=6$ if $\chara k=0$ and $\lind I=11$ if $\chara k=2$. 
\end{ex}

We have seen from Theorem \ref{thm_ld1} that the condition $\lind I(G)=1$ is equivalent to $G$ being weakly chordal and having induced matching number $2$. Therefore we would like to ask the following 
\begin{quest}
\label{quest_ld2}
Can the condition $\lind I(G)=2$ be characterized solely in terms of the combinatorial properties of the graph $G$, independent of the characteristic of $k$?
\end{quest}
\begin{rem}
The analog of Question \ref{quest_ld2} for regularity has a clear answer. We know from Fr\"oberg's theorem that the condition $\reg I(G)=2$ is independent of characteristic. Fern\'anderz-Ramos and Gimenez \cite[Theorem 4.1]{FG} show that if $G$ is bipartite and connected, then the following are equivalent:
\begin{enumerate}
\item $\reg I(G)=3$;
\item $G^c$ has an induced $C_4$ and the bipartite complement of $G$ has no induced $C_m$ with $m\ge 5$.
\end{enumerate}
It is not hard to see that (ii) is equivalent to the condition that the bipartite complement of $G$ is weakly chordal and $\inmat(G)=2$. Also, the reader may check that if $G$ is bipartite and disconnected, then $\reg I(G)=3$ if and only if $G$ has two connected components $G_1,G_2$, each of which is co-chordal. In particular, for a bipartite graph $G$, the condition $\reg I(G)=3$ is independent of the characteristic.

On the other hand, if $G$ is not bipartite, then the condition $\reg I(G)=3$ might depend on the characteristic: for Katzman's ideal in Example \ref{ex_dependence}, $\reg I(G)=3$ if $\chara k=0$ and $4$ if $\chara k= 2$.

Dalili and Kummini's ideal in Example \ref{ex_bipartite} shows that for connected bipartite graphs, the condition $\reg I(G)=4$ is dependent on the characteristic: for their ideal, $\reg I=4$ if $\chara k=0$ and $5$ if $\chara k=2$.
\end{rem}

In view of Example \ref{ex_bipartite}, we wonder if for a bipartite graph $G$ and $2\le \ell\le 5$, the condition $\lind I(G) =\ell$ is independent of the value of $\chara k$.

\section*{Acknowledgments}
Parts of this work were finished when the first author was visiting the Department of Mathematics, University of Nebraska -- Lincoln. We are grateful to Luchezar Avramov for his generosity and constant encouragement. The first author is grateful to Tim R\"omer for his useful comments and suggestions. Finally, we are indebted to two anonymous referees for their careful reading of the paper and for pointing a gap in the proof of Theorem \ref{thm_projdim_weaklychordal} in a previous version. We were able to fill the gap and improve the readability of our paper thanks to their thoughtful suggestions and comments.

\end{document}